\def\thetitle{{ On the finiteness property of hyperbolic simplicial actions: the right-angled Artin groups  and their extension graphs  }}
\newtheorem{thmalpha}{Theorem}
\newtheorem{thm}{Theorem}[section]
\newtheorem{lem}[thm]{Lemma}
\newtheorem{cor}[thm]{Corollary}
\newtheorem{prop}[thm]{Proposition}
\theoremstyle{remark}
\newtheorem*{rem}{Remark}
\theoremstyle{definition}
\newtheorem*{defn*}{Definition}
\title[extension graph]\thetitle
\author{Hyungryul Baik}
\address{Department of Mathematical Sciences, KAIST,  
291 Daehak-ro, Yuseong-gu, Daejeon 34141, South Korea }
\email{hrbaik@kaist.ac.kr}
\author{Donggyun Seo}
\address{Department of Mathematical Sciences, KAIST,  
291 Daehak-ro, Yuseong-gu, Daejeon 34141, South Korea }
\email{seodonggyun@kaist.ac.kr}
\author{Hyunshik Shin}
\address{%
Department of Mathematics and Statistics, 
Georgia State University,
25 Park Place, 14th Floor, 
Atlanta, GA 30303
}
\email{%
        hshin30@gsu.edu
}
\begin{document}
\maketitle

\begin{abstract}
We study the right-angled Artin group action on the extension graph. We show that this action satisfies a certain finiteness property, which is a variation of a condition introduced by Delzant and Bowditch. As an application we show that the asymptotic translation lengths of elements of a given right-angled Artin group are always rational and once the defining graph has girth at least 6, they have a common denominator. We construct explicit examples which show the denominator of the asymptotic translation length of such an action can be arbitrary. We also observe that if either an element has a small syllable length or the defining graph for the right-angled Artin group is a tree then the asymptotic translation lengths are integers. 
\end{abstract}



%
%

\section{Introduction} \label{sec:intro} 
When a group $G$ acts on a metric space $(X,d)$ by isometries, one can
define the asymptotic translation length of each element of $G$ as follows: 
$$ \tau(g) = \lim_{n\to\infty} \dfrac{d(x, g^nx)}{n} $$
where $g\in G$ and $x\in X$. One can easily see that the limit exists and does not depend on the choice of $x$ (see for instance Exercise 6.6 in Chapter II.6 of \cite{MR1744486}). Note that $\tau(\cdot)$ is homogeneous in the sense that $\tau(g^n) = n\tau(g)$ for all $g\in G, n \in \mathbb{Z}$. 

The asymptotic translation lengths have been studied by many authors for group actions which arise naturally in geometric topology.  In the case that $G$ is the mapping class group of a surface and $X$ is the curve complex, then the geometric/dynamical aspect of the asymptotic translation length has been studied in the literature. For instance, Masur-Minsky \cite{MasurMinsky99} showed that for a given mapping class $f$, $\tau(f)$ is positive if and only if $f$ is pseudo-Anosov (to show a pseudo-Anosov mapping class makes a definite asymptotic progress, they proved so-called the nesting lemma). The minimal asymptotic translation lengths for various subsets of the mapping class groups are studied in, for instance, \cite{GadreTsai11}, \cite{GadreHironakaKentLeininger13}, \cite{Valdivia17}, \cite{KinShin18}, \cite{BaikShin18}, \cite{BaikShinWu18}. 

In general, a simplicial group action on a simplicial graph with the edge metric may contain an irrational length element. For example, Conner \cite{MR1466819} found a polycyclic group whose action on its Cayley graph contains an irrational length element with respect to the word metric. On the other hand, Gromov \cite[Section 8.5.S]{MR919829} discovered that every hyperbolic group has a discrete rational length spectrum. More precisely, Gromov proved that for a group acting simplicially, properly, and cocompactly on a $\delta$-hyperbolic graph equipped with the edge metric, every element has a rational asymptotic translation length with the common denominator depending only on the action. Delzant \cite[Proposition 3.1(iii)]{MR1390660} gave another simple proof of Gromov's result. 

Later Delzant's method was adapted by Bowditch \cite[Theorem 1.4]{Bowditch08} which shows that the asymptotic translation lengths of elements of a given mapping class group are rational numbers with uniformly bounded denominator on the curve complex. Note that the set up of \cite{Bowditch08} is quite different from the one for Gromov or Delzant. While it is still true that curve complexes are $\delta$-hyperbolic \cite{MasurMinsky99}, mapping class groups are not hyperbolic, curve complexes are locally infinite, and the action is non-proper. 

We refine Bowditch's method and apply to another important player in the geometric group theory, the right-angled Artin groups. For the right-angled Artin groups, Kim--Koberda \cite{kim2013embedability} introduced the notion of the extension graph. For a finite simplicial graph $\Gamma$, the associated right-angled Artin group (RAAG) $A(\Gamma)$ acts on the extension graph $\Gamma^e$ by isometries which is a right action by conjugation. The extension graphs and right-angled Artin group actions on them share many similar properties with the curve graph and mapping class group actions. For more detail, see \cite{kim2013embedability}, \cite{kim2014geometry}, \cite{kim2014obstruction}, \cite{koberda2017geometry} for instance. We also give a brief review on this material in Section \ref{sec:prelim}.

We consider the asymptotic translation length of loxodromic elements of
$A(\Gamma)$ with respect to this action on $\Gamma^e$. Our main result is to show that the asymptotic translation lengths of loxodromic elements of $A(\Gamma)$ on $\Gamma^e$ are rational numbers (with uniformly bounded denominators in many cases) which is an analogue of the theorem of Bowditch \cite[Theorem 1.4]{Bowditch08}. Throughout the paper, we assume that all our graphs are connected unless specified otherwise.

\begin{thmalpha}[Main Theorem]\footnote{While the paper was being reviewed, A. Genevois independently proved a more general version of the first half of the main theorem in \cite{2022arXiv220906441G}. } \label{thm:main1}
Let $\Gamma$ be any finite connected simplicial graph. Then for the action of $A(\Gamma)$ on the extension graph $\Gamma^e$, all loxodromic elements have rational asymptotic translation lengths. If the graph $\Gamma$ has girth at least 6 in addition, the asymptotic translation length have a common denominator. \end{thmalpha}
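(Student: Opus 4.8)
The plan is to adapt the Delzant--Bowditch strategy (\cite{MR1390660}, \cite{Bowditch08}) to the action of $A(\Gamma)$ on $\Gamma^e$, the crucial new ingredient being the finiteness property we establish for this action, which is designed to substitute for the properness and cocompactness that are both unavailable here (and $\Gamma^e$ is moreover locally infinite). Fix a loxodromic $g$ and a basepoint $x\in\Gamma^e$. Since $\Gamma^e$ is $\delta$-hyperbolic by Kim--Koberda \cite{kim2013embedability}, standard hyperbolic geometry yields a $\langle g\rangle$-invariant quasi-geodesic axis $\mathfrak a$ for $g$ together with a defect estimate $n\,\tau(g)-C\le d(x,g^n x)\le n\,\tau(g)+C$ for a constant $C=C(g,\delta)$; thus $\tau(g)$ is governed by the coarse structure of geodesics $[x,g^n x]$ as $n\to\infty$, and it suffices to read this structure off from finitely much combinatorial data.

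Next I would bring in the finiteness property. The idea is to attach to each vertex $v$ on a suitably chosen ``tight'' (or reduced) geodesic between $x$ and $g^n x$ a marked configuration of bounded size $R$ --- a decorated local picture carrying enough data to reconstruct, vertex by vertex, how the tight geodesic continues --- and to show that modulo the $A(\Gamma)$-action only finitely many configuration types occur. Passing to tight/reduced geodesics and to boundedly much marked data is exactly what keeps this finite despite the local infiniteness of $\Gamma^e$, and this is where the combinatorics of sequences of conjugates of standard generators (in the spirit of Kim--Koberda) enters. I expect this step --- identifying a notion of configuration that is simultaneously $A(\Gamma)$-finite and fine enough to locally determine tight geodesics along an axis --- to be the main obstacle.

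Granting the finiteness property, one assembles a \emph{transition graph} $\mathcal T$: its vertices are the finitely many configuration types, and an edge, carrying an integer label that measures the progress it makes along $\mathfrak a$, records an admissible pair (configuration at $v_i$, configuration at $v_{i+1}$) of consecutive vertices on a tight geodesic. Because $g$ preserves $\mathfrak a$ and acts on configurations, the sequence of types along $\mathfrak a$ is $\langle g\rangle$-periodic, so a tight geodesic realizing $d(x,g^n x)$ traces a walk in $\mathcal T$ that wraps around cycles and makes net progress comparable to $n$; minimizing length over such walks identifies $\tau(g)$ with the minimal mean weight of a positively-progressing cycle of $\mathcal T$. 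Since $\mathcal T$ is a finite graph with integer data, this minimum is attained on a simple cycle, so $\tau(g)=p/q$ with $q\le\abs{V(\mathcal T)}$, and rationality follows. (Checking that the limit $\tau(g)$ really coincides with this combinatorial minimum uses the defect estimate together with the fact that an optimal tight geodesic may be chosen nearly $g$-periodic.)

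For the common-denominator statement the point is that, in general, the scale $R$ and hence the size of $\mathcal T$ may depend on $g$ --- roughly because the quasi-geodesic and defect constants attached to $\mathfrak a$ need not be uniform over all loxodromic elements. When $\mathrm{girth}(\Gamma)\ge 6$, however, the action is tame enough (e.g.\ acylindrical-type control with constants depending only on $\Gamma$) that the hyperbolicity constant $\delta$, the scale $R$, and therefore the finite list of configuration types may all be chosen depending only on $\Gamma$. Then a single finite transition graph $\mathcal T_\Gamma$ works for every loxodromic $g\in A(\Gamma)$, each $\tau(g)$ is a minimal mean cycle value in $\mathcal T_\Gamma$, and every denominator divides $\operatorname{lcm}(1,2,\dots,\abs{V(\mathcal T_\Gamma)})$. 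Establishing this uniformity under the girth hypothesis --- upgrading the finiteness property to a uniform one --- is the remaining difficulty.
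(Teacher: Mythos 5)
Your overall scheme --- adapt Delzant--Bowditch, substitute a finiteness property for properness/cocompactness, and extract rationality from finite combinatorial data --- is the same broad strategy as the paper, but the two steps you explicitly defer as ``the main obstacle'' and ``the remaining difficulty'' are precisely the mathematical content of the proof, so as written there is a genuine gap. You never produce the analogue of tight geodesics or of a finite set of configuration types for $\Gamma^e$: the paper's substitute is entirely word-combinatorial. For a cyclically syllable-reduced loxodromic $g$ it builds the subgraph $\Lambda_g$ (the union of the copies $\Gamma^{s_i\cdots s_1}$ over all syllable decompositions $g=s_n\cdots s_1$), proves it is weakly convex by showing geodesics between translates of $\Gamma$ can be routed along a syllable decomposition, and then takes $\mathcal{T}_g=\bigcup_m \Lambda_{g^{2m}}^{g^{-m}}$; local finiteness and cocompactness of the $\langle g\rangle$-action rest on the fact that only finitely many elliptic elements occur as subwords of powers of $g$ (controlled via permutations of syllables, Lemmas \ref{lem:syl_express}--\ref{lem:syl_perm} and Proposition \ref{prop:num_elliptic}). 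None of this is replaced by your ``marked configuration'' device, which is asserted rather than constructed. Moreover, once a width-$\kappa$ axial subgraph exists, the paper's route to rationality is more elementary than your minimal-mean-cycle transition graph: $g$ cyclically permutes at most $\kappa$ pairwise disjoint geodesics on the axial subgraph (Lemma \ref{lem:refinement_Bowditch}), so $g^m$ preserves a geodesic for some $m\le\kappa$, $\tau(g^m)$ is an integer because the action is simplicial, and $\tau(g)=\tau(g^m)/m$. Your cycle-minimization argument could plausibly be made to work, but it needs the very finiteness statement you have not supplied, plus a proof that the limit equals the combinatorial minimum.

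The common-denominator step is also attributed to the wrong mechanism. You propose that girth $\ge 6$ gives ``acylindrical-type control with constants depending only on $\Gamma$,'' but the action is acylindrical and $\Gamma^e$ is a quasi-tree (so $\delta$ depends only on $\Gamma$) for \emph{every} finite connected $\Gamma$; this cannot be what distinguishes the large-girth case, and indeed in the general case the paper's axial subgraph has width depending on the syllable length of $g$. What girth $\ge 6$ actually buys is geometric: closed $2$-neighborhoods in $\Gamma^e$ are trees (Lemma \ref{lem:ayclic_neighbor}), so \emph{every} syllable decomposition $g=s_n\cdots s_1$ yields a weakly convex union $\bigcup_m\Gamma^{g(m)}$ (Proposition \ref{prop:shallow_isom_embed}, Proposition \ref{prop:conv}), and the link $\operatorname{lk}_\Gamma(v_1)$ of the vertex supporting $s_1$ separates the two ends of this axial subgraph (Lemma \ref{lem:link_separating}); hence the width is bounded by the maximum degree of $\Gamma$, giving uniform $\kappa$ and the common denominator. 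Without an argument of this kind, your claim that ``the scale $R$ and the configuration types may be chosen depending only on $\Gamma$'' is unsupported, so the second assertion of the theorem is not proved by your proposal either.
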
 

In fact, this is a special case of actions satisfying so-called \emph{$\kappa$-finiteness property}.
An axial subgraph of a loxodromic is a thickened geodesic axis that can be separated by finitely many vertices.
We define the width of an axial subgraph to be the minimum number of vertices needed to be removed to increase the number of connected components of the axial subgraph.

\begin{defn*}[Finiteness property]
Suppose a group $G$ acts simplicially on a $\delta$-hyperbolic graph $\mathcal{G}$.
\begin{enumerate}
\item The action of $G$ on $\mathcal{G}$ is said to have the \emph{finiteness property} if every loxodromic has an axial subgraph.
\item For an integer $\kappa \geq 1$, the action of $G$ on $\mathcal{G}$ is said to have the \emph{$\kappa$-finiteness property} if every loxodromic of $G$ has an axial subgraph of width at most $\kappa$.
\end{enumerate}
\end{defn*}

Let $\operatorname{Spec}(G, \mathcal{G})$ denote the spectrum of asymptotic translation lengths of all elements of $G$ and we call it the \emph{length spectrum} of $G$ on $\mathcal{G}$.
Then we can get the refinement of Bowditch's theorem.

\begin{thmalpha} [Gromov \cite{MR919829}, Delzant \cite{MR1390660}, Bowditch \cite{Bowditch08}, Theorem \ref{thm:rational_length}] \label{thmalpha:b}
Let $G$ be a group acting simplicially on a $\delta$-hyperbolic graph $\mathcal{G}$.
\begin{enumerate}
\item \label{enum1:rational_length} If the action of $G$ has the finiteness property, then $\operatorname{Spec}(G, \mathcal{G})$ consists of rational numbers.
\item \label{enum2:rational_length} If the action of $G$ has the $\kappa$-finiteness property for some positive integer $\kappa$, then $\operatorname{Spec}(G, \mathcal{G})$ consists of fractions of denominator at most $\kappa$.
\end{enumerate}
\end{thmalpha}

\begin{rem}[Curve graph]
After Bowditch \cite{Bowditch08}, Shackleton \cite{MR2970053} and Webb \cite{MR3378831} improved the common denominator of the length spectrum of a mapping class group. In fact we can immediately apply Theorem \ref{thmalpha:b} to Webb's work \cite[Theorem 6.2]{MR3378831}.

Note Hensel--Przytycki--Webb \cite{HenselPrzytyckiWebb15} showed every curve graph is $17$-hyperbolic.
As a result, if $\xi(S) \geq 2$, then the asymptotic translation length of a pseudo-Anosov on $\mathcal{C}(S)$ is a rational number whose denominator is at most $(820 \cdot 2^{2,017,200(\xi(S)+9)})^{\xi(S)}$.
\end{rem}

The second step in the proof of the main theorem is showing that the right-angled Artin group actions on the extension graphs satisfy the $\kappa$-finiteness property for some $\kappa$. We remark that in the general case $\kappa$ depends on the choice of an element, and we get the first part of the main theorem (See Theorem \ref{thm:generalcasefiniteness}). On the other hand, when the graph has girth at least 6, we can show that $\kappa$ can be made into a uniform constant over the entire right-angled Artin group and get the second part of the main theorem (See Theorem \ref{thm:RAAG_finiteness}.) The $\kappa$-finiteness induces the following theorem from which the main theorem follows.

\begin{thmalpha}[Corollary \ref{cor:RAAG_cyclic_permutation}]
Let $\Gamma$ be a finite connected simplicial graph of girth at least 6.
If $N$ is the maximum degree of $\Gamma$, then every loxodromic of $A(\Gamma)$ permutes cyclically at most $N$ geodesics on $\Gamma^e$.
\end{thmalpha}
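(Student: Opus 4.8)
The plan is to re-run the Delzant--Bowditch machinery behind Theorem~\ref{thm:rational_length} while keeping track of the explicit constant that the girth hypothesis supplies, and to extract an honest invariant geodesic for a controlled power rather than merely a rational translation length.

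\emph{Step 1: reduce to a width-$N$ axial subgraph.} By Theorem~\ref{thm:RAAG_finiteness} the action of $A(\Gamma)$ on $\Gamma^e$ has the $\kappa$-finiteness property, and for $\Gamma$ of girth at least $6$ the set of cutting vertices produced there can be taken inside a single vertex-link of $\Gamma$ realized in $\Gamma^e$, hence of cardinality at most $N=\Delta(\Gamma)$. So fix a loxodromic $g\in A(\Gamma)$, a $g$-invariant axial subgraph $Y$ (thicken a quasi-axis of $g$, or take the union of all bi-infinite geodesics of $\Gamma^e$ between the two fixed points $\xi_\pm=g^{\pm\infty}\in\partial\Gamma^e$), and a minimal set $S\subset V(Y)$ of cutting vertices with $\abs{S}\le N$. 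The translates $S_n:=g^nS$ are pairwise disjoint cutting sets of $Y$, linearly ordered along the axis, and $g$ carries $S_n$ onto $S_{n+1}$.

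\emph{Step 2: the ``next cutting vertex'' map.} For $s\in S$, pick a geodesic ray $\rho$ in $Y$ from $s$ toward $\xi_+$; since $S_1=gS$ separates $Y$, $\rho$ meets $gS$, and --- using minimality of $S$ together with the tree-like local structure of $\Gamma^e$ forced by girth $\ge 6$ --- it does so at a single vertex, independent of the choice of $\rho$ (and $\rho$ crosses $gS$ exactly once). Let $\nu(s)\in S$ be the $g^{-1}$-image of that vertex; this defines a map $\nu\co S\to S$. Since $\abs{S}\le N$, the $\nu$-orbit of any point enters a cycle of length $m\le N$; choose $s$ lying on such a cycle, so that $\nu$ permutes $\{s,\nu(s),\dots,\nu^{m-1}(s)\}$ cyclically.

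\emph{Step 3: assemble the permuted geodesics.} Iterating Step~2 shows that the ray from $s$ toward $\xi_+$ crosses $S_n$ precisely at $g^n\nu^n(s)$; using the (invertible) cyclic structure of $\nu$ on the chosen cycle to run the same recipe for negative indices, one obtains a bi-infinite $\langle g^m\rangle$-periodic locally geodesic path $\gamma$ with $\gamma\cap S_n=g^n\nu^n(s)$ for all $n\in\Z$. A periodic local geodesic in the $\delta$-hyperbolic graph $\Gamma^e$ is a genuine geodesic, hence $g^m\gamma=\gamma$, and a short computation with the crossing vertices shows $g$ cyclically permutes the $m\le N$ geodesics $\gamma,g\gamma,\dots,g^{m-1}\gamma$; this is the assertion. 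In particular $\tau(g)=\tfrac1m\tau(g^m)\in\tfrac1m\Z$, recovering the girth $\ge 6$ half of Theorem~\ref{thm:main1}.

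\emph{Main obstacle.} The crux is the well-definedness of $\nu$: one must show that every geodesic ray from a cutting vertex toward $\xi_+$ exits the next cutting set through the same vertex. This is exactly where girth at least $6$ is indispensable --- it excludes the short cycles in $\Gamma^e$ that would let two competing geodesics straddle a cutting set through different vertices, leaving $\nu$ only coarsely defined, which is precisely why the general case (Theorem~\ref{thm:generalcasefiniteness}) yields only an element-dependent denominator. The remaining points --- that a minimal cutting set is crossed exactly once by an axial geodesic, and that the periodic local geodesic is an honest geodesic --- are routine hyperbolic-geometry facts.
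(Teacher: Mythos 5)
Your Step 2 is where the argument breaks: the ``next cutting vertex'' map $\nu$ is not well defined, and girth at least $6$ does not rescue it. Girth $\geq 6$ only forbids cycles of length at most $5$ (equivalently, closed $2$-neighborhoods are trees, Lemma \ref{lem:ayclic_neighbor}); it does not prevent two geodesic rays issuing from the same cutting vertex $s$ toward $\xi_+$ from splitting at the next cutting set $gS$ and re-converging far beyond it, since the resulting cycle can be arbitrarily long. The paper's own odd-cycle computation furnishes a concrete counterexample. Take $\Gamma = C_k$ with $k \geq 7$ odd, $l = (k+1)/2$, $g = v_{kl} \cdots v_{2l} v_l$, and the axial subgraph of Proposition \ref{prop:conv}; the cutting sets are the (conjugated) links $\{ v_{jl-1}, v_{jl+1} \}$, and by Lemma \ref{lem:cycle_paths} the distances between crossing vertices of consecutive links are $k-l-2$, $l-2$ and $k-l$ according to type. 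Starting from a vertex of type $v_{jl+1}$, the two two-step routes to the type-$(-1)$ vertex two links ahead cost $(k-l)+(l-2)$ and $(l-2)+(k-l)$, i.e.\ the same amount $k-2$, which is the actual distance since every path must cross the intermediate link at one of its two vertices; and both choices extend to genuine geodesic rays toward $\xi_+$ inside the axial subgraph (the strictly alternating ray, and the ray that makes one same-type step and then alternates --- every initial segment of either realizes the distance, as one checks from the additivity of lengths across the separating links). So two geodesic rays from the same cutting vertex cross the next cutting set at \emph{different} vertices, $\nu$ depends on the chosen rays, and with an unlucky choice (say, always the same-type vertex) the periodic path you assemble in Step 3 is not geodesic at all. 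Relatedly, the ``routine fact'' invoked there --- that a periodic local geodesic in a $\delta$-hyperbolic graph is a genuine geodesic --- is false in general (local geodesics are at best quasi-geodesics); in this setting globality of the concatenation is exactly the global optimality of the crossing pattern, which your construction never establishes. A smaller issue: your alternative choice of $Y$ as the union of all bi-infinite geodesics between $\xi_\pm$ is not known to be locally finite or of width at most $N$; the width bound is tied to the specific subgraph $\mathcal{A}_{s_n \cdots s_1}$ of Proposition \ref{prop:conv} and Lemma \ref{lem:link_separating}.

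The paper avoids choosing crossing vertices altogether. It combines Theorem \ref{thm:RAAG_finiteness} (the axial subgraph whose ends are separated by a link, hence of width at most $N$) with Lemma \ref{lem:refinement_Bowditch}: Bowditch's Lemma \ref{lem:bowditch_lemma} already produces \emph{some} finite family of geodesics in the axial subgraph cyclically permuted by $g$; if two of them intersect, one performs surgery --- concatenating $g^m$-translates of a subsegment and verifying the result is geodesic (Lemma \ref{lem:L_is_geodesic}) --- to obtain a strictly smaller permuted family; once the permuted geodesics are pairwise disjoint, the pigeonhole principle applied to an end-separating set of size at most $N$ bounds their number by $N$. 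To salvage your approach you would need to start from an honest invariant geodesic of some power (as Bowditch's lemma supplies) rather than from a pointwise-defined map on the cutting set, which is in effect what the paper does.
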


Here the \emph{degree} of a vertex is the number of edges incident to the vertex, and the \emph{maximum degree} of a graph is the maximum of the degrees of all vertices of the graph.

In Section \ref{sec:examples}, we introduce some applications and examples induced from the $\kappa$-finiteness property of a right-angled Artin group.
In Section \ref{subsec:treegraphs} and Section \ref{subsec:cyclegraphs}, we study the possible asymptotic translation lengths in the case when the defining graph for the right-angled Artin group is either a tree or a cycle.
We simply write $\operatorname{Spec}(A(\Gamma))$ for the length spectrum $\operatorname{Spec}(A(\Gamma), \Gamma^e)$.

\begin{thmalpha}[Proposition \ref{thm:translation-tree} and \ref{prop:spectrum_cycle}]
For each finite simplicial graph $\Gamma$, the following hold.
\begin{enumerate}
\item If $\Gamma$ is a tree, then $\operatorname{Spec}(A(\Gamma))$ is a set of even integers.
\item If $\Gamma$ is a cycle of even length more than $5$, then $\operatorname{Spec}(A(\Gamma))$ is a set of integers.
\item If $\Gamma$ is a cycle of odd length more than $5$, then $\operatorname{Spec}(A(\Gamma))$ is a set of rational numbers of denominator $2$, furthermore, it contains a non-integer value.
\end{enumerate}
\end{thmalpha}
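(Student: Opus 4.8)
The plan is to treat the tree case and the two cycle cases via two separate mechanisms that happen to reinforce each other: a \emph{parity} argument that is available whenever the defining graph is bipartite (so whenever $\Gamma$ is a tree, and whenever $\Gamma=C_n$ with $n$ even), and the cyclic–permutation bound of Corollary~\ref{cor:RAAG_cyclic_permutation}, which for a cycle $C_n$ ($n\ge 6$, maximum degree $2$) says that every loxodromic of $A(\Gamma)$ cyclically permutes at most two geodesics of $\Gamma^e$. Since non-loxodromic elements contribute only $0$ to $\operatorname{Spec}(A(\Gamma))$, it is enough to control $\tau(g)$ for loxodromic $g$.

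First I would isolate the parity input as a lemma: \emph{if $\Gamma$ is bipartite then so is $\Gamma^e$, and the $A(\Gamma)$-action preserves the bipartition}. Fix a proper $2$-colouring $c\colon V(\Gamma)\to\{0,1\}$. Every vertex of $\Gamma^e$ is a conjugate $wvw^{-1}$ of a standard generator $v$, and $v$ is uniquely determined by $wvw^{-1}$ (apply the abelianisation $A(\Gamma)\to\Z^{V(\Gamma)}$, under which $wvw^{-1}\mapsto \bar v$), so it is legitimate to set $\tilde c(wvw^{-1}):=c(v)$. I claim $\tilde c$ is proper, i.e.\ that any two distinct commuting conjugates $wvw^{-1}$ and $w'v'w'^{-1}$ have $v$ adjacent to $v'$ in $\Gamma$. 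Conjugating, we may assume the first vertex is $v$; then $v$ commutes with $uv'u^{-1}$ where $u=w^{-1}w'$, so $uv'u^{-1}$ lies in the centralizer of $v$, which by the standard description recalled in Section~\ref{sec:prelim} is the special subgroup $\langle\st(v)\rangle=\langle v\rangle\times\langle\operatorname{lk}(v)\rangle$. Abelianising again, $\bar{v'}$ lies in the coordinate subspace spanned by $\st(v)$, so $v'\in\st(v)$; moreover $v'=v$ is impossible, since $v$ is central in $\langle\st(v)\rangle$ and hence its only conjugate inside that subgroup is itself, which would force the two vertices to coincide. Therefore $v'\in\operatorname{lk}(v)$ and $c(v)\ne c(v')$. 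Since the action is by conjugation it fixes the type of each vertex, hence fixes $\tilde c$; as $\Gamma^e$ is connected its bipartition is unique, so $d_{\Gamma^e}(x,gx)$ is even for every $g\in A(\Gamma)$ and every vertex $x$ (because $x$ and $gx$ have the same type, hence the same colour).

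Now I deduce the three cases. (1) A tree $\Gamma$ is bipartite, and $\Gamma^e$ is a tree (Section~\ref{sec:prelim}); a loxodromic $g$ then has a unique invariant axis $\gamma$, and $\tau(g)=d_{\Gamma^e}(v,gv)$ for any vertex $v\in\gamma$, which is even by the lemma, so $\operatorname{Spec}(A(\Gamma))\subseteq 2\Z_{\ge 0}$. (2) If $\Gamma=C_n$ with $n$ even and $n>5$, then $\Gamma$ is bipartite with girth $n\ge 6$ and maximum degree $2$, so by Corollary~\ref{cor:RAAG_cyclic_permutation} a loxodromic $g$ cyclically permutes one or two geodesics. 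If one, say $\gamma$, then $\tau(g)=d_{\Gamma^e}(v,gv)$ for $v\in\gamma$ is even. If two, say $\gamma_0,\gamma_1$, then $g^2$ preserves $\gamma_0$ and, being loxodromic, translates it, so $\tau(g^2)=d_{\Gamma^e}(v,g^2v)$ for $v\in\gamma_0$; this is even, say $2q$, whence $\tau(g)=\tfrac12\tau(g^2)=q\in\Z$. Either way $\operatorname{Spec}(A(\Gamma))\subseteq\Z_{\ge 0}$. (3) If $\Gamma=C_n$ with $n$ odd and $n>5$, then $n\ge 7$, so again girth $\ge 6$ and maximum degree $2$; by Corollary~\ref{cor:RAAG_cyclic_permutation} (equivalently, by the $2$-finiteness property and the $\kappa$-finiteness part of Theorem~\ref{thm:rational_length}) a loxodromic $g$ satisfies $\tau(g)=d_{\Gamma^e}(v,gv)$ or $\tau(g)=\tfrac12 d_{\Gamma^e}(v,g^2v)$, hence $\operatorname{Spec}(A(\Gamma))\subseteq\tfrac12\Z_{\ge 0}$; the denominator $2$ is actually attained by the examples of Section~\ref{sec:examples}.

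The main obstacle is the bipartiteness lemma, and within it precisely the assertion that distinct commuting conjugates of standard generators have $\Gamma$-adjacent types; this rests on the description of the centralizer of a generator in a RAAG together with the abelianisation trick, and once it is in place the remaining steps are bookkeeping. The only other nontrivial inputs are the quoted fact that $\Gamma^e$ is a tree whenever $\Gamma$ is (needed to get an honest $g$-invariant axis, hence denominator exactly $1$, in case (1)) and the elementary observation that a loxodromic isometry preserving a bi-infinite geodesic translates it by exactly its translation length.
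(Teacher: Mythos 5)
Your route is essentially the paper's: parity comes from the vertex-label map $v^g\mapsto v$ (the paper pushes a geodesic forward to a closed walk in the bipartite graph $\Gamma$; you pull the $2$-colouring back to $\Gamma^e$ --- the same fact in two guises), and the denominators in the cycle cases come from Corollary \ref{cor:RAAG_cyclic_permutation} with maximum degree $2$, exactly as in Proposition \ref{prop:spectrum_cycle}. The one place where your write-up has a genuine gap is the properness of the pulled-back colouring, specifically the claim that two \emph{distinct} commuting conjugates of the \emph{same} generator cannot occur. Your justification --- that $v$ is central in $\langle\st(v)\rangle$ ``and hence its only conjugate inside that subgroup is itself'' --- does not follow: centrality only controls conjugation by elements of $\langle\st(v)\rangle$, while the conjugator $u$ here is an arbitrary element of $A(\Gamma)$. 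What your abelianisation step actually gives is $v^u=v\,h$ with $h\in\langle\operatorname{lk}(v)\rangle$ trivial in homology, and you still have to show $h=1$; this needs a further argument (for instance, cyclically reduce $h$ inside $\langle\operatorname{lk}(v)\rangle$ by a conjugator that commutes with $v$ and compare supports of cyclically reduced conjugate elements, or simply cite Kim--Koberda's result that $\Gamma$ and $\Gamma^e$ have the same chromatic number). The fact is true and the gap is fixable, but as written the key lemma underpinning all three parts is not proved; the paper avoids the issue by working with the forgetful map $\phi\colon\Gamma^e\to\Gamma$ and quoting Kim--Koberda for the structural facts it needs.

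A smaller point: in the odd-cycle case you only establish the containment $\operatorname{Spec}(A(C_k))\subseteq\{n/2\mid n\in\Z\}$ and defer the occurrence of a genuinely non-integer value to ``the examples of Section \ref{sec:examples}'', which is circular for a blind proof. Proposition \ref{prop:spectrum_cycle}(2), which part (3) of the statement references, also asserts that a loxodromic with non-integer translation length exists, and the paper proves this by a substantial explicit computation (the element $v_{kl}\cdots v_{2l}v_l$ has $\tau=k(k-4)/2$). If part (3) is read as pure containment, your argument suffices there; if the denominator $2$ is meant to be realized, you would need to supply such an example yourself.
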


We also consider the realization problem. Namely, which rational numbers can be realized as asymptotic translation lengths of the right-angled Artin group action on the extension graph? In Section \ref{subsec:arbitrarydenominator}, we show that the denominators of the asymptotic translation lengths of the right-angled Artin group action on the extension graph can be arbitrary. 

\begin{thmalpha}[Proposition \ref{prop:arbitrary_integer}]
For every positive integer $k$ more than $2$, there exists a finite simplicial graph $\Gamma$ such that $A(\Gamma)$ contains an element of asymptotic translation length $3 + (1/k)$.
\end{thmalpha}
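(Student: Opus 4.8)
The plan is to build $\Gamma$ explicitly as a long cycle with a few extra edges, so that $A(\Gamma)$ contains a loxodromic element whose axis in $\Gamma^e$ is permuted cyclically by the element with a controlled ``fractional'' shift. Recall that Theorem~\ref{thm:translation-tree}/\ref{prop:spectrum_cycle} tells us a cycle of odd length at least $7$ already produces denominator $2$; the idea for getting denominator exactly $k$ is to take an element $w$ that is, in an appropriate sense, a product of a long ``translating'' word along a path together with a short cyclic ``rotation'' word, arranged so that $w$ acts on a $w$-invariant bi-infinite geodesic in $\Gamma^e$ by translating it past $3k+1$ vertices while requiring $k$ applications of $w$ to return a fundamental domain to a translate of itself. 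Concretely, I would look for $\Gamma$ containing an induced cycle $C$ of length $k$ (or a near-cycle) together with a pendant path (or a second cycle) attached at one vertex of $C$, and let $w$ be the product of the standard generators along the pendant path followed by the product of generators around $C$. The syllable-length/girth bookkeeping from Section~\ref{sec:prelim}, together with the Kim--Koberda description of quasigeodesics in $\Gamma^e$ via reduced words, should let me compute $d_{\Gamma^e}(x, w^n x)$ along the orbit of a suitable vertex $x$.

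The key steps, in order, would be: (1) write down the candidate graph $\Gamma$ and the candidate element $w\in A(\Gamma)$ as an explicit reduced word; (2) exhibit an explicit bi-infinite geodesic $\gamma$ in $\Gamma^e$ preserved by $w$ (built from the translates of a fixed finite ``block'' of vertices under powers of $w$), and verify it is a geodesic using the combinatorial criterion for distances in extension graphs (two vertices $v^g, v'^{g'}$ are adjacent iff the corresponding conjugates commute, i.e.\ the reduced forms have the ``linked'' structure); (3) show that $w$ translates $\gamma$ by a fixed combinatorial amount, and compute that amount to be $3k+1$ over a period where $w^k$ shifts by a multiple of $k$, so that $\tau(w) = (3k+1)/k = 3 + 1/k$; (4) invoke Theorem~B (or the $\kappa$-finiteness of Theorem~\ref{thm:generalcasefiniteness}) to know $\tau(w)$ is genuinely rational with the denominator read off from step (3), and check that no cancellation lowers the denominator below $k$ --- this last point is where a lower bound $d_{\Gamma^e}(x,w^nx) \ge (3k+1)n/k - \text{const}$ is needed, not just an upper bound.

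The main obstacle I expect is step (3), specifically the \emph{lower} bound on distances in $\Gamma^e$ that certifies $\tau(w)$ is exactly $3+1/k$ rather than some smaller fraction with the same or smaller denominator: upper bounds on $\tau$ come cheaply by exhibiting a short path between $x$ and $w^nx$, but ruling out shortcuts requires understanding geodesics in the (locally infinite) extension graph, i.e.\ showing that the obvious path following $\gamma$ cannot be beaten. The natural tool is the quasi-isometric model for $\Gamma^e$ and the disjointness/linking combinatorics of Kim--Koberda (used already in proving the $\kappa$-finiteness results), from which one extracts that any path between $w$-translates of the block must cross a definite number of ``separating stars'' --- the same mechanism underlying the axial subgraph of~$w$. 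A secondary subtlety is ensuring $\Gamma$ stays connected and has the combinatorial features (girth, or absence thereof) needed for $w$ to be loxodromic at all; choosing $C$ to be an induced $k$-cycle with $k>2$ keeps $A(\Gamma)$ non-abelian and $w$ loxodromic, and the ``$k$ more than $2$'' hypothesis in the statement is exactly what makes this construction go through.
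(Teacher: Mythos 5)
Your plan, as stated, cannot produce the required examples, and the obstruction comes from the paper's own results. The candidate graph you propose---an induced $k$-cycle $C$ with a pendant path attached at one vertex---has maximum degree $3$, and once the cycle is long enough to make the element loxodromic (length at least $6$, say), the girth of the whole graph is at least $6$. Theorem \ref{thm:RAAG_finiteness} then gives the $\kappa$-finiteness property with $\kappa$ at most the maximum degree, i.e.\ $\kappa \le 3$, and Theorem \ref{thm:rational_length}\eqref{enum2:rational_length} (equivalently Corollary \ref{cor:RAAG_cyclic_permutation}) forces every asymptotic translation length in that group to be a fraction with denominator at most $3$. Since $3+1/k=(3k+1)/k$ is in lowest terms, no element of $A(C\cup\text{path})$ can have this length when $k>3$. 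The denominator is governed by the width of the axial subgraph (the size of a separating link, Lemma \ref{lem:link_separating}), not by the length of the word $w$ or of a cycle in $\Gamma$; making $w$ go around a long cycle does not create more than $\kappa$ cyclically permuted geodesics. Your intuition that ``cycle of length $k$ gives denominator $k$'' is also contradicted by Proposition \ref{prop:spectrum_cycle}: when the defining graph itself is a cycle, the denominators are at most $2$.

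Beyond this, the proposal is a program rather than a proof: steps (1)--(3) are never carried out (no explicit word, no invariant geodesic, no distance computation), and you correctly flag the lower bound as the hard point but do not supply it. For comparison, the paper's construction goes the other way around: it keeps the element short ($g=tvu$, syllable length $3$) and puts the complexity into the graph $\Gamma_k$, which has three degree-$k$ vertices $u,v,t$ whose $k$ leaves are matched cyclically ($u_i\!-\!v_i\!-\!t_i\!-\!u_{i+1}$, with $t_k$ joined to $u_1$ only by a length-$2$ path). The link of $u$ has $k$ vertices, so the axial subgraph has width $k$, and every geodesic axis must thread the translates of this link; the distance computations (Lemma \ref{lem:uiuj_distance}) show each ``step'' $u_i\to u_{i+1}^g$ costs $3$ while closing the loop costs $4$, forcing $d(u_j,u_j^{g^k})=3k+1$ and ruling out cheaper returns for smaller powers, whence $\tau(g)=3+1/k$. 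The mechanism you identify for lower bounds (separating stars/links) is indeed the right one, but it must be applied to a graph with a degree-$k$ vertex and a built-in index shift, not to a bounded-degree graph with a long word.
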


Finally, we obtain a uniform bound of minimum positive asymptotic translation length.

\begin{thmalpha}[Corollary \ref{cor:length_two}]
For every finite connected simplicial graph $\Gamma$ of diameter at least $3$, the minimum positive asymptotic translation length for $A(\Gamma)$ is at most $2$.
\end{thmalpha}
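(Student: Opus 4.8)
The plan is to exhibit, for every finite connected simplicial graph $\Gamma$ of diameter at least $3$, an explicit element of $A(\Gamma)$ whose asymptotic translation length on $\Gamma^e$ is at most $2$. Since the extension graph is a connected graph with the edge metric and the action is simplicial, the asymptotic translation length of any element is at least $1$ unless the element is elliptic; so the real content is to produce a loxodromic (or at worst an element with $\tau$ bounded by $2$) that moves efficiently. First I would pick vertices $u, w$ of $\Gamma$ at distance exactly $3$, together with a geodesic $u = v_0, v_1, v_2, v_3 = w$ in $\Gamma$. The natural candidate is the element $g = v_0 v_3 \in A(\Gamma)$ (a product of two generators whose supports are far apart in $\Gamma$), or possibly a length-$4$ word such as $v_0 v_1 v_2 v_3$; I expect the right choice to be the commutator-free product of the two endpoints, because $v_0$ and $v_3$ do not commute and share no common neighbor (distance $3$ forbids a common neighbor), so the subgroup they generate is a free group $F_2$ embedded in $A(\Gamma)$ in a "geometric" way, and such elements are known to act loxodromically on $\Gamma^e$ by the Kim--Koberda theory reviewed in Section \ref{sec:prelim}.

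Next I would estimate $\tau(g)$ directly using the combinatorics of the extension graph. The vertices $v_0$ and $v_3$ are themselves vertices of $\Gamma \subseteq \Gamma^e$, and conjugates $v_0^{g^n}$, $v_3^{g^n}$ are also vertices of $\Gamma^e$. The key observation is that in $\Gamma^e$, two vertices $x, y$ are adjacent iff the corresponding generators commute; so one can bound $d_{\Gamma^e}(x, g^n x)$ by exhibiting a short path through conjugates of $v_0$ and $v_3$. Concretely, taking the basepoint to be the vertex $v_0 \in \Gamma^e$, I would track the orbit $g^n \cdot v_0$ and show that consecutive "blocks" of the syllable decomposition of $g^n$ push the basepoint a bounded combinatorial distance per application of $g^2$; the $\kappa$-finiteness machinery (Theorem \ref{thm:rational_length} and the axial subgraph language) then converts a linear upper bound on orbit displacement into the bound $\tau(g) \le 2$. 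In fact the cleanest route may be to quote the earlier structural results: since the syllable length of $g$ is $2$, the "small syllable length" remark in the abstract (and the corresponding statement proven earlier in the paper) may already force $\tau(g)$ to be an integer, and the integer cannot exceed the syllable length contribution, pinning it at $1$ or $2$.

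The main obstacle I anticipate is ruling out the degenerate possibility that the chosen element is \emph{not} loxodromic — i.e., that $\tau(g) = 0$ — which would make the statement vacuous for that element and force me to choose a different witness. This is where the hypothesis $\operatorname{diam}(\Gamma) \ge 3$ is essential: it guarantees a pair of vertices whose stars are disjoint and non-adjacent, which is precisely the condition (again from Kim--Koberda) ensuring that the corresponding product acts as a loxodromic isometry of $\Gamma^e$ rather than fixing a point or an edge. I would verify carefully that distance $3$ (rather than merely distance $2$) is what is needed: at distance $2$ the two vertices may have a common neighbor, whose star could contain the axis and cause the element to be elliptic or parabolic-like, whereas at distance $3$ no such obstruction exists. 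Once loxodromicity is secured, the upper bound $\tau(g) \le 2$ should follow from the syllable-length bound together with the elementary inequality $d_{\Gamma^e}(x, g^n x) \le 2n + C$ obtained by writing a path through the $2n$ conjugated generators appearing in $g^n = (v_0 v_3)^n$, and then dividing by $n$ and taking the limit.
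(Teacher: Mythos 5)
Your witness element is the right one (and the same as the paper's): with $\operatorname{diam}(\Gamma)\ge 3$ you pick $v_0,v_3$ at distance exactly $3$ and take $g=v_0v_3$, and your loxodromicity argument is fine, since $v_0,v_3$ are non-adjacent with no common neighbor, so $\operatorname{supp}(g)$ lies in no join and Proposition \ref{prop:ellip} applies. The gap is in the upper bound $\tau(g)\le 2$, where neither of your two routes works as stated. First, the finiteness/axial-subgraph machinery of Theorem \ref{thm:rational_length} only controls denominators of $\tau$; it gives no upper bound on its size. Second, the heuristic ``syllable length $2$ forces $\tau\le 2$'' is false: the paper's Proposition \ref{prop:small} shows that a loxodromic with support $\{v_1,v_2\}$ has $\tau = 2d_\Gamma(v_1,v_2)-4$, which exceeds $2$ as soon as the two supporting vertices are at distance greater than $3$. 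It is the choice of distance exactly $3$, not the syllable length, that caps the translation length at $2$. Third, your explicit estimate $d_{\Gamma^e}(x,g^nx)\le 2n+C$ ``via a path through the $2n$ conjugated generators of $g^n$'' fails: consecutive conjugated generators are simultaneous conjugates of $v_0$ and $v_3$, hence at mutual distance $d_\Gamma(v_0,v_3)=3$ in $\Gamma^e$ (Lemma \ref{lem:suc}), so concatenating geodesics between these milestones costs roughly $6n$, which only yields $\tau(g)\le 6$. (The remark that $\tau\ge 1$ for non-elliptic elements is also not true in general -- the spectra here contain half-integers -- but it is not needed: loxodromic means $\tau>0$ by definition.)

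To actually get $2$, the path must run through the \emph{stars} of the conjugated generators rather than through the generators themselves: take the geodesic $\gamma$ of length $d_\Gamma(v_0,v_3)-2=1$ realizing the distance between $\operatorname{st}_\Gamma(v_0)$ and $\operatorname{st}_\Gamma(v_3)$ (the edge $v_1v_2$ of your chosen geodesic), and form $L=\bigcup_{q\in\mathbb{Z}}(\gamma\cup\gamma^{v_0})^{g^q}$, which advances by $2$ per application of $g$. This is exactly the content of Proposition \ref{prop:small}, where weak convexity (Lemma \ref{lem:finite_convex}) is used to show $L$ is a genuine geodesic axis, so that $\tau(g)=2d_\Gamma(v_0,v_3)-4=2$; the corollary then follows immediately, which is how the paper proves it. Your plan would be complete if you replaced your displacement estimate by this star-to-star path (only the upper bound is needed for the corollary, since loxodromicity already gives $\tau>0$).
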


\subsection{Acknowledgements}
We thank Anthony Genevois, Takuya Katayama, Sang-hyun Kim, Eiko Kin, Ki-hyoung Ko, and Thomas Koberda for helpful discussions. We also appreciate the organizers of the 16th East Asian Conference on Geometric Topology since many discussions beneficial for us to develop this research happened during the conference. We would like to give special thanks to Anthony Genevois for pointing out an error in the earlier draft. The first author was partially supported by the National Research Foundation of Korea(NRF) grant funded by the Korea government(MSIT) (No. 2020R1C1C1A01006912), the second author was supported by the National Research Foundation of Korea(NRF) grant No. 2021R1C1C200593811 from the Korea government(MSIT), and the last author was supported by Samsung Science and Technology Foundation under Project Number SSTF-BA1702-01.

%
%
\section{Preliminary} \label{sec:prelim}

We give a brief review on basic materials used in this paper regarding the right-angled Artin groups and the extension graphs. Let us recall the assumption that $\Gamma$ is connected. Some well-known facts seem to lack proofs in the literature. We provide proofs to those facts for the sake of completeness.

\subsection{Right-angled Artin group}

Suppose $\Gamma = (V(\Gamma), E(\Gamma))$ is a finite connected simplicial graph. 
The \emph{right-angled Artin group} of $\Gamma$, denoted by $A(\Gamma)$, is the group presented by
$$A(\Gamma) := \langle v \in V(\Gamma) \mid [u, v] = 1 ~\text{for}~\text{all}~\{u, v\} \in E(\Gamma) \rangle.$$
One say $\Gamma$ is the \emph{defining graph} of $A(\Gamma)$.
The defining graph $\Gamma$ is frequently considered as a metric space with respect to the \emph{edge metric} $d_\Gamma$ on $V(\Gamma)$ that measures the smallest number of edges between two vertices.
For each vertex $v \in \Gamma$, the \emph{star of $v$}, denoted by $\operatorname{st}_\Gamma(v)$, is the subgraph induced with $\{u \in V(\Gamma) \mid d_\Gamma(u, v) = 1 \}$.

The \emph{support} of a reduced word $w$, denoted by $\operatorname{supp}(w)$, is the set of vertices composing $w$.
By Hermiller--Meier \cite{MR1314099}, all reduced words representing the same element have identical word length and the same supports.
In other words, for every element $g \in A(\Gamma)$, one can define $\operatorname{supp}(g)$, the \emph{support of $g$}, as a support of a reduced word representing $g$.
The \emph{word length} of $g$ is a length of a reduced word representing $g$.

A reduced word $w$ of $A(\Gamma)$ is called a \emph{star word} if $\operatorname{supp}(w) \subseteq \operatorname{st}_\Gamma(v)$ for some $v \in V(\Gamma)$.
Note every word can be decomposed into a product of star words.
The \emph{star length} of $g$, written by $\| g \|_{\rm st}$, is the smallest number of star words whose product represents $g$.
The next lemma indicates every element admits a product of star words that optimizes word length and star length simultaneously.

\begin{lem}[Lemma 20(2) \cite{kim2014geometry}] \label{lem:st}
For every element $g \in A(\Gamma)$, there exist star words $w_1, \dots, w_n$ such that $g = w_n \dots w_1$ with $\lvert g \rvert = \lvert w_n \rvert + \dots + \lvert w_1 \rvert$ and $\left\| g \right\|_{\operatorname{st}} = n$.
\end{lem}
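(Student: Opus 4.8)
The plan is to pick a particularly efficient star decomposition and show it is automatically geodesic. Among all expressions $g = w_n \cdots w_1$ in which each $w_i$ is a star word and $n = \|g\|_{\operatorname{st}}$ — there is at least one, by the definition of $\|\cdot\|_{\operatorname{st}}$ — I would choose one for which the total length $\sum_{i=1}^{n} \lvert w_i \rvert$ is as small as possible. Since $\lvert g \rvert \le \sum_i \lvert w_i \rvert$ always holds, it suffices to prove that for this choice the concatenation $w_n \cdots w_1$ is already a reduced word; then $\lvert g \rvert = \sum_i \lvert w_i \rvert$ and $\lVert g \rVert_{\operatorname{st}} = n$, which is exactly the assertion.

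So suppose $w_n \cdots w_1$ is not reduced. Here I would invoke the classical description of reduced words in a right-angled Artin group (Green, Servatius): a word in the letters $V(\Gamma)^{\pm 1}$ fails to be reduced precisely when it contains two occurrences $v^{\varepsilon}$ and $v^{-\varepsilon}$ of some vertex $v$ (with opposite signs $\varepsilon = \pm 1$) such that every letter lying strictly between them commutes with $v$; deleting such a pair yields a shorter word for the same element. Choosing such a pair innermost, no intermediate letter is a power of $v$. Since each $w_i$ is itself a reduced word, the two occurrences cannot both lie in a single $w_i$, so say $v^{\varepsilon}$ occurs in $w_j$ and $v^{-\varepsilon}$ in $w_k$ with $j > k$.

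Now I would carry out the cancellation inside the decomposition. Write $w_j = a\, v^{\varepsilon}\, b$; every letter of the suffix $b$ lies between the two offending occurrences, hence commutes with $v$, so $w_j$ is shuffle-equivalent to $a\,b\,v^{\varepsilon}$, and therefore $ab$ is a reduced word with $\operatorname{supp}(ab) \subseteq \operatorname{supp}(w_j)$, i.e. $ab$ is again a star word, of length $\lvert w_j\rvert - 1$. Symmetrically, writing $w_k = c\, v^{-\varepsilon}\, d$, the prefix $c$ commutes with $v$, so $w_k$ is shuffle-equivalent to $v^{-\varepsilon}\, c\, d$ and $cd$ is a star word of length $\lvert w_k\rvert - 1$. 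Finally every letter of each intermediate factor $w_{j-1}, \dots, w_{k+1}$ commutes with $v$, so $v^{\varepsilon}$ slides past all of them and cancels $v^{-\varepsilon}$, giving
\[ g \;=\; w_n \cdots w_{j+1}\,(ab)\, w_{j-1} \cdots w_{k+1}\, (cd)\, w_{k-1} \cdots w_1, \]
again a product of star words. If $ab$ and $cd$ are both nonempty, this is an $n$-factor star decomposition of total length $\sum_i \lvert w_i\rvert - 2$, contradicting the minimality of the total length; if one of them is empty, it is a star decomposition with fewer than $n$ factors, contradicting $n = \lVert g\rVert_{\operatorname{st}}$. Either way we reach a contradiction, so $w_n \cdots w_1$ is reduced and the lemma follows.

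The only genuinely new input is the normal-form fact for right-angled Artin groups; the remaining ingredients — that contiguous subwords of reduced words remain reduced, that shrinking the support keeps a word a star word, and the shuffle equivalences — are routine bookkeeping. The point that needs care, and the reason the whole scheme works, is that a cancelling pair is forced to straddle two of the $w_i$ because each $w_i$ is individually reduced; the "commutes with $v$" hypothesis then lets us eliminate the pair while touching only those two factors and only by deleting letters, so neither the number of factors nor the star-word property can get worse.
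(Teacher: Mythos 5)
Your argument is correct. Note that the paper itself offers no proof of this lemma: it is quoted verbatim from Kim--Koberda (Lemma 20(2) of \cite{kim2014geometry}), so there is no in-paper argument to compare against; your write-up is a self-contained substitute. The two ingredients you use are sound: (i) the extremal choice --- among star decompositions with exactly $n=\|g\|_{\operatorname{st}}$ factors, one of minimal total length --- and (ii) the cancellation criterion for right-angled Artin groups (a non-reduced word contains a pair $v^{\varepsilon},v^{-\varepsilon}$ with every intervening letter commuting with $v$), which is the same normal-form technology the paper elsewhere attributes to Hermiller--Meier \cite{MR1314099} and Servatius. The key points all check: a cancelling pair cannot sit inside a single factor because star words are reduced by definition; the shuffles $w_j = a v^{\varepsilon} b \sim abv^{\varepsilon}$ and $w_k = c v^{-\varepsilon} d \sim v^{-\varepsilon}cd$ preserve reducedness and support, so $ab$ and $cd$ remain star words; sliding $v^{\varepsilon}$ through the intermediate factors and cancelling either drops the total length by $2$ with $n$ factors intact (contradicting minimality of the total length) or deletes a factor (contradicting $n=\|g\|_{\operatorname{st}}$). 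The "innermost pair" refinement is harmless but not actually needed, since $v^{\pm1}$ commutes with $v$ anyway. So the proposal stands as a complete proof of the cited fact.
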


For an element $g \in A(\Gamma)$, a \emph{star decomposition} of $g$ is a product of star words, $g = w_n \dots w_1$, satisfying $\lvert g \rvert = \sum_{i = 1}^n\lvert w_i  \rvert$ and $\| g \|_{\rm st} = n$.
A \emph{syllable} is a nonzero power of a vertex generator of $A(\Gamma)$, that is, $v^\ell$ for some $v \in V(\Gamma)$ and $\ell \neq 0$.
Note each element in $A(\Gamma)$ can be written as a product of syllables.
For each $g \in A(\Gamma)$, the minimum length of a product of syllables representing $g$ is called the \emph{syllable length} of $g$, denoted by $\| g \|_{\rm syl}$, and let us call a shortest product of syllables representing $g$ a \emph{syllable decomposition} of $g$.

\subsection{Extension graph}

For two elements $g, h \in A(\Gamma)$, let $g^h$ denote $h^{-1}gh$.
Let us define a simplicial graph $\Gamma^e$: the vertex set of $\Gamma^e$ consists of $v^g$ for all $v \in V(\Gamma)$ and $g \in A(\Gamma)$, and $u^h$ and $v^g$ are joined by an edge whenever $[u^h, v^g] = 1$.
We call $\Gamma^e$ the \emph{extension graph} of $\Gamma$.
The defining graph $\Gamma$ can be considered as a subgraph of $\Gamma^e$ via the inclusion $\Gamma \hookrightarrow \Gamma^e$ defined by $v \mapsto v$.
From this point of view, $\Gamma^g$ can be seen as the image of $\Gamma$ in the extension graph.
For every $x \in \Gamma^e$, one write the \emph{star of $x$ in $\Gamma^e$} by $\operatorname{st}_{\Gamma^e}(x)$.

\begin{rem}[Join]
The \emph{(graph) join} of two simplicial graphs $\Delta$ and $\Delta'$, denoted by $\Delta \oplus \Delta'$, is obtained from the disjoint union $\Delta \sqcup \Delta'$ by adding all edges joining pairs of vertices in $V(\Delta) \times V(\Delta')$.
As a binary operation of simplicial graphs, join is commutative and associative.
Furthermore, join and direct sum are completely compatible in the collection of right-angled Artin groups.
Precisely, the homomorphism $A(\Delta) \oplus A(\Delta') \to A(\Delta \oplus \Delta')$, defined by $(x, y) \mapsto xy$, is an isomorphism; on the other hand, by Centralizer theorem \cite{MR1023285} and Isomorphism theorem \cite{MR1356145}, every right-angled Artin group that can be expressed as a direct sum of two nontrivial subgroups has a nontrivial join as a defining graph.
\end{rem}
\begin{rem}
Let us quickly sketch the proof of the last statement of the above remark.
Suppose $A(\Delta) = A \oplus B$ for some $\Delta$ and nontrivial subgroups $A, B < A(\Delta)$.
Write $C(A) = C_{A(\Delta)}(A)$ and $Z(A)$ for the centralizer and the center of $A$, respectively.
Similarly write $C(B)$ and $Z(B)$ for $B$.
Let us define $\operatorname{lk}_\Delta(A) := (V(\Delta) \cap C(A)) - A$ and $\operatorname{lk}_\Delta(B) := (V(\Delta) \cap C(B)) - B$.
By Centralizer theorem, we get $C(A) = Z(A) \oplus \langle \operatorname{lk}_\Delta(A) \rangle$ and $C(B) = Z(B) \oplus \langle \operatorname{lk}_\Delta(B) \rangle$.
Since $A \cap B = \{1\}$, $A \leq C(B)$ and $B \leq C(A)$, we have $A \leq \langle \operatorname{lk}_\Delta(B) \rangle$ and $B \leq \langle \operatorname{lk}_\Delta(A) \rangle$. This induces $A = \langle \operatorname{lk}_\Delta(B) \rangle$ and $B = \langle \operatorname{lk}_\Delta(A) \rangle$. If $\Delta_A$ (\emph{resp.}, $\Delta_B$) is the induced subgraph with vertex set $\operatorname{lk}_\Delta(B)$ (\emph{resp.}, $\operatorname{lk}_\Delta(A)$), then $A \cong A(\Delta_A)$ and $B \cong A(\Delta_B)$. Therefore, $A(\Delta)$ is isomorphic to $A( \Delta_A \oplus \Delta_B)$, and by Isomorphism theorem, $\Delta \cong \Delta_A \oplus \Delta_B$.
\end{rem}

If $\Gamma = \Gamma_1 \oplus \Gamma_2$ for some nonempty graphs $\Gamma_1$ and $\Gamma_2$, then $\Gamma^e = \Gamma_1^e \oplus \Gamma_2^e$, so $\Gamma^e$ is bounded.
In fact, Kim--Koberda \cite[Lemma 3.5(5)]{kim2013embedability} showed $\Gamma^e$ is bounded if and only if either $\Gamma$ is a single vertex or can be decomposed into a join of nonempty subgraphs.
The supposition in the next lemma is equivalent that $\Gamma^e$ is unbounded.

\begin{lem} \label{lem:star}
  Suppose $\Gamma$ contains more than one vertex and cannot be decomposed into a join of two nonempty subgraphs.
  \begin{enumerate}
    \item \label{enum:star1} For each $v \in V(\Gamma)$ and $\ell \neq 0$, one has $\operatorname{st}_\Gamma(v) = \Gamma \cap \Gamma^{v^\ell}$. 
    \item \cite[Lemma 3.5(6)]{kim2013embedability} \label{enum:star2} For every vertex $x \in \Gamma^e$, the complement of $\operatorname{st}_{\Gamma^e}(x)$ is disconnected.
  \end{enumerate}
\end{lem}

A doubling is one of the fundamental tools to study $\Gamma^e$, developed by Kim--Koberda \cite{kim2013embedability, kim2014geometry}.
Precisely, for a subgraph $A$ of $\Gamma^e$, a \emph{doubling} of $A$ along a vertex $v$ is the union of $A$ and $A^{v^\ell}$ for some nonzero integer $\ell$.
Note there exists an infinite sequence $\Gamma = \Gamma_0 \subset \Gamma_1 \subset \Gamma_2 \subset \cdots$ such that each $\Gamma_{i+1}$ is a doubling of $\Gamma_i$ and $\Gamma^e  = \bigcup_{i = 0}^\infty \Gamma_i$.
As following the proof of \cite[Lemma 3.5(6)]{kim2013embedability}, we prove the following.

\begin{lem} \label{lem:sylsep}
Let $g = s_n \dots s_1$ be a syllable decomposition.
For all $i \in \{1, \dots, n\}$, if $v_i$ is the vertex supporting $s_i$ and $z_i$ denotes $v_i^{s_i \dots s_1}$, then $\Gamma - \operatorname{st}_{\Gamma^e}(z_i)$ and $\Gamma^g - \operatorname{st}_{\Gamma^e}(z_i)$ are separated by $\operatorname{st}_{\Gamma^e}(z_i)$.
\end{lem}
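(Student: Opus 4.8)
The plan is to interpret $\operatorname{st}_{\Gamma^e}(z_i)$ as a vertex stabilizer in a Bass--Serre tree and reduce the separation statement to a combinatorial fact about syllable decompositions. One may assume $v_i$ is not a central vertex of $\Gamma$ (otherwise $z_i=v_i$ is central in $A(\Gamma)$, $\operatorname{st}_{\Gamma^e}(z_i)=V(\Gamma^e)$, and both subgraphs in the statement are empty). Write $h_i=s_i\cdots s_1$, $h_{i-1}=s_{i-1}\cdots s_1$ and $g'=s_n\cdots s_{i+1}$, so $z_i=v_i^{h_i}$ and $g=g'h_i$; let $S=\operatorname{st}_\Gamma(v_i)$ and $L=\operatorname{lk}_\Gamma(v_i)$. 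Since $v_i$ is not central, it induces the nontrivial splitting $A(\Gamma)=A(\Gamma\setminus v_i)\ast_{A(L)}A(S)$ with $A(S)=\langle v_i\rangle\times A(L)$; let $T$ be its Bass--Serre tree, with $u_0$ and $u_0'$ the base vertices stabilized by $A(S)$ and by $A(\Gamma\setminus v_i)$ and joined by an edge. I will use the following standard facts. Each vertex of $\Gamma^e$, being a conjugate of a vertex of $\Gamma$, acts elliptically on $T$, with $\operatorname{Fix}_T(v^f)=f^{-1}\operatorname{Fix}_T(v)$ for $f\in A(\Gamma)$; one has $\operatorname{Fix}_T(v_i)=\{u_0\}$, because $v_i$ acts on the $\mathbb{Z}\cong A(S)/A(L)$ of edges at $u_0$ by a nontrivial translation, whereas $u_0'\in\operatorname{Fix}_T(w)$ for every vertex $w\neq v_i$; and, since the centralizer of a vertex $v$ in a RAAG is $A(\operatorname{st}_\Gamma(v))$, we get $C_{A(\Gamma)}(z_i)=A(S)^{h_i}=\operatorname{Stab}_T(p_0)$, where $p_0:=h_i^{-1}u_0$. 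Consequently a vertex $x$ of $\Gamma^e$ lies in $\operatorname{st}_{\Gamma^e}(z_i)$ if and only if $p_0\in\operatorname{Fix}_T(x)$.

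For a vertex $x$ of $\Gamma^e$ outside $\operatorname{st}_{\Gamma^e}(z_i)$ the subtree $\operatorname{Fix}_T(x)$ misses $p_0$, hence lies in a single component $F(x)$ of $T\setminus\{p_0\}$. If $x,y$ are adjacent in $\Gamma^e$ and both outside $\operatorname{st}_{\Gamma^e}(z_i)$, then $x$ and $y$ commute; two commuting elliptic isometries of a tree have a common fixed point (each preserves the other's fixed subtree, and an elliptic isometry preserving a subtree fixes a point of it), so $\operatorname{Fix}_T(x)$ and $\operatorname{Fix}_T(y)$ meet and $F(x)=F(y)$. Thus $F$ is constant on connected components of $\Gamma^e\setminus\operatorname{st}_{\Gamma^e}(z_i)$. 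From the fixed-point computations above, together with the fact that $u_0$ and $u_0'$ (being adjacent) lie in the same component of $T\setminus\{p_0\}$, one checks that $F$ is constant equal to the component $C_1\ni u_0'$ on all of $\Gamma-\operatorname{st}_{\Gamma^e}(z_i)$ and constant equal to the component $C_2\ni g^{-1}u_0'$ on all of $\Gamma^g-\operatorname{st}_{\Gamma^e}(z_i)$. Therefore it suffices to show $C_1\neq C_2$: then any path in $\Gamma^e$ from a vertex of $\Gamma-\operatorname{st}_{\Gamma^e}(z_i)$ to a vertex of $\Gamma^g-\operatorname{st}_{\Gamma^e}(z_i)$ must meet $\operatorname{st}_{\Gamma^e}(z_i)$, which is exactly the assertion. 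Finally, $C_1\neq C_2$ says that $p_0$ lies on the geodesic $[u_0',g^{-1}u_0']$ in $T$; translating by $h_i$ and using $h_i=s_ih_{i-1}$ and $h_ig^{-1}=g'^{-1}$, this is equivalent to: $u_0$ separates $s_ih_{i-1}\cdot u_0'$ from $g'^{-1}\cdot u_0'$ in $T$.

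The key combinatorial fact I would prove is that \emph{between two consecutive $v_i$-syllables of $s_n\cdots s_1$ (syllables that are powers of $v_i$) there is a syllable supported at a vertex outside $S$}. Indeed, if the syllables $s_m$ strictly between consecutive $v_i$-syllables $v_i^a$ and $v_i^b$ were all supported in $S$, then — being non-$v_i$-syllables — each $s_m$ would lie in $A(L)$, hence commute with $v_i$; sliding their product past $v_i^a$ and using $v_i^av_i^b=v_i^{a+b}$ rewrites $g$ with strictly fewer syllables, contradicting minimality of the syllable length. (This also uses that any subword of a syllable decomposition is again one, and that the support of a syllable decomposition is the set of vertices of its syllables — both immediate from the canonical retractions of $A(\Gamma)$ onto full-subgraph subgroups.) Granting this, I finish with a normal-form computation in the amalgam $A(\Gamma)=A(\Gamma\setminus v_i)\ast_{A(L)}A(S)$: the combinatorial fact forces $s_i$ to survive as an honest $A(S)$-factor in the reduced form of $s_ih_{i-1}$ (the blocks of non-$v_i$-syllables adjacent to it are not contained in $A(L)$), so $s_ih_{i-1}\cdot u_0'$ lies in a component of $T\setminus\{u_0\}$ distinct from the one containing $u_0'$; dually the reduced form of $g'^{-1}=s_{i+1}^{-1}\cdots s_n^{-1}$ begins with an $A(\Gamma\setminus v_i)$-factor (its leading run of non-$v_i$-syllables is, again by the combinatorial fact, not in $A(L)$), so $g'^{-1}\cdot u_0'$ remains in the component containing $u_0'$. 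Hence $u_0$ lies between $s_ih_{i-1}u_0'$ and $g'^{-1}u_0'$, giving $C_1\neq C_2$.

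I expect the main obstacle to be this last normal-form step — turning the statement that $s_i$ is not absorbed into the precise statement about which side of $u_0$ each of $s_ih_{i-1}u_0'$ and $g'^{-1}u_0'$ lies on. The subtlety is that $v_i$ may occur in several pairwise non-adjacent syllables of the decomposition, so one must track how the reduced form of $g$ in the amalgam interacts with all these occurrences; the combinatorial fact above is precisely what rules out the dangerous cancellations. The remaining ingredients (the splitting along $v_i$, ellipticity of vertex-conjugates, the centralizer of a vertex in a RAAG, and the common-fixed-point property of commuting elliptics) are all standard.
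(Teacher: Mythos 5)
Your proposal is correct, but it takes a genuinely different route from the paper. The paper's proof never leaves the extension graph: it first shows that $\operatorname{st}_{\Gamma^e}(z_i)$ separates $\Gamma$ from $\Gamma^g$ inside the finite connected union $A=\Gamma\cup\bigcup_{k}\Gamma^{s_k\cdots s_1}$, via the claim that if $x\in\Gamma^{s_j\cdots s_1}$ and $y\in\Gamma^{s_\ell\cdots s_1}$ commute with $j<i\le\ell$ then one of them commutes with $z_i$ (centralizer theorem plus the invariance of $z_i$ under changing the syllable decomposition), and then propagates the separation to all of $\Gamma^e$ by induction along a doubling sequence $A=A_0\subset A_1\subset\cdots$ using the retraction $p\colon A_k\to A_{k-1}$. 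You instead pass to the Bass--Serre tree $T$ of the splitting $A(\Gamma)=A(\Gamma\setminus v_i)\ast_{A(\operatorname{lk}_\Gamma(v_i))}A(\operatorname{st}_\Gamma(v_i))$, identify the vertex set of $\operatorname{st}_{\Gamma^e}(z_i)$ with $\operatorname{Stab}_T(p_0)$ for $p_0=h_i^{-1}u_0$ (again via the centralizer theorem), observe that the assignment ``component of $T\setminus\{p_0\}$ containing $\operatorname{Fix}_T(x)$'' is constant on components of $\Gamma^e\setminus\operatorname{st}_{\Gamma^e}(z_i)$ because commuting elliptics share a fixed point, and reduce everything to an amalgam normal-form computation; your key combinatorial fact (minimality forces a syllable supported outside $\operatorname{st}_\Gamma(v_i)$ between consecutive $v_i$-syllables) is exactly what makes the interior blocks lie in $A(\Gamma\setminus v_i)\setminus A(\operatorname{lk}_\Gamma(v_i))$, so the reduced form of $s_ih_{i-1}$ starts with a star factor while that of $g'^{-1}$ does not, and the claims that $s_ih_{i-1}u_0'$ leaves, and $g'^{-1}u_0'$ stays in, the component of $u_0'$ both check out (including the degenerate cases: $v_i$ central, $i=n$, no $v_i$-syllable above position $i$ --- in the last case your phrase ``the leading run is not in $A(L)$'' is literally false when $g'$ has no $v_i$-syllable, but then $g'^{-1}\in A(\Gamma\setminus v_i)$ and the needed conclusion holds trivially). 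What each approach buys: yours is a single global argument using standard Bass--Serre and normal-form technology, avoiding the doubling induction and the projection trick entirely; the paper's stays elementary within the combinatorics of $\Gamma^e$, needs only the centralizer theorem and Lemma \ref{lem:z_invariant}, and reuses the Kim--Koberda doubling machinery that the rest of the paper is built on.
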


\begin{proof}
Let $A$ denote $\Gamma \cup \bigcup_{k = 1}^n \Gamma^{s_k \dots s_1}$.
Lemma \ref{lem:star}\eqref{enum:star1} implies $\Gamma \cap \Gamma^{s_1} = \operatorname{st}_\Gamma(v_1)$ and $\Gamma^{s_k \dots s_1} \cap \Gamma^{s_{k+1} \dots s_1} = \operatorname{st}_{\Gamma}(v_{k+1})^{s_{k+1} \dots s_1}$ for each $k$.
Because $\Gamma$ is connected, $A$ is connected as well.

Choose $x = u^{s_j \dots s_1}$ and $y = v^{s_\ell \dots s_1}$ for some $u,v \in V(\Gamma)$ and $0 \leq j < i \leq \ell \leq n$. (Every empty word in this proof is considered as the identity, for example, $j = 0 \Rightarrow x = u$.)
We claim if $[x, y] = 1$, then either $[x, z_i] = 1$ or $[y, z_i] = 1$.
Because $[u, v^{s_\ell \dots s_{j+1}}] = 1$, there exists a syllable decomposition $s_\ell \dots s_{j+1} = s_\ell' \dots s_{j+1}'$ such that $[u, s_{j_0}' \dots s_{j+1}'] = 1$ and $[v, s_\ell' \dots s_{j_0+1}'] = 1$ for some $j_0 \in \{ j, \dots, \ell \}$.

Since $j < i \leq \ell$, the syllable $s_i$ belongs to either $\{ s_{j+1}', \dots, s_{j_0}' \}$ or $\{s_{j_0 +1}', \dots, s_\ell'\}$.
If $s_i = s_{j_1}'$ for some $j+1 \leq j_1 \leq j_0$, then $u$ and $v_i$ commute by Centralizer theorem \cite{MR1023285}.
Because $z_i = v_i^{(s_{j_1}' \dots s_{j+1}') (s_j \dots s_1)}$ by Lemma \ref{lem:z_invariant}, we have $$[x, z_i] = [u^{s_j \dots s_1}, v_i^{s_i \dots s_1}] = [u^{(s_{j_1}' \dots s_{j+1}')(s_j \dots s_1)}, v_i^{(s_{j_1}' \dots s_{j+1}') (s_j \dots s_1)}] = 1$$
Similarly, if $s_i \in \{s_{j_0 +1}', \dots, s_\ell'\}$, then $y$ and $z_i$ commute.
So the claim holds.

By the above claim, $B := (\Gamma \cup (\bigcup_{k = 1}^{i - 1}\Gamma^{s_k \dots s_1})) - \operatorname{st}_{\Gamma^e}(z_i)$ and $C := (\bigcup_{k = i}^n \Gamma^{s_k \dots s_1}) - \operatorname{st}_{\Gamma^e}(z_i)$ are disjoint, and furthermore, they are not joined by an edge.
Note $A - \operatorname{st}_{\Gamma^e}(z_i) = B \sqcup C$.
So we have $A - \operatorname{st}_{\Gamma^e}(z_i)$ is disconnected.
More precisely, $\operatorname{st}_{\Gamma^e}(z_i)$ separates $\Gamma - \operatorname{st}_{\Gamma^e}(z_i)$ from $\Gamma^g - \operatorname{st}_{\Gamma^e}(z_i)$ in $A$.

Because $\Gamma \subset A$, there exists a sequence $A = A_0 \subset A_1 \subset A_2 \subset \dots$ such that $\Gamma^e = \bigcup_{k = 0}^\infty A_k$ and $A_{k+1}$ is a doubling of $A_k$ for each $k$.
We need only to show that for each $k$, $\operatorname{st}_{\Gamma^e}(z_i)$ separates $\Gamma - \operatorname{st}_{\Gamma^e}(z_i)$ from $\Gamma^g - \operatorname{st}_{\Gamma^e}(z_i)$ in $A_k$.
Assume that $\operatorname{st}_{\Gamma^e}(z_i)$ separates $\Gamma - \operatorname{st}_{\Gamma^e}(z_i)$ from $\Gamma^g - \operatorname{st}_{\Gamma^e}(z_i)$ in $A_{k - 1}$.

Suppose $A_k = A_{k - 1} \cup A_{k - 1}^{(u')^m}$ for some $u'$ and $m \in \mathbb{Z} \setminus \{ 0 \}$.
Then there exists a simplicial projection $p: A_k \to A_{k-1}$ defined by $p(x) = \begin{cases} x & \text{if}~x \in A_{k-1}, \\ x^{(u')^{-m}}, & \text{otherwise}. \end{cases}$
Choose a path subgraph $P \subset A_k$ joining $\Gamma - \operatorname{st}_{\Gamma^e}(z_i)$ and $\Gamma^g - \operatorname{st}_{\Gamma^e}(z_i)$.

By the induction hypothesis, $p(P)$ intersects $\operatorname{st}_{\Gamma^e}(z_i)$.
If $u' \in \operatorname{st}_{\Gamma^e}(z_i)$, then $P$ intersects $\operatorname{st}_{\Gamma^e}(z_i)$ because $(u')^m$ preserves $\operatorname{st}_{\Gamma^e}(z_i)$.
Suppose $u'$ does not commute with $z_i$.
Then $u'$ is contained in a component of $A_{k-1} - \operatorname{st}_{\Gamma^e}(z_i)$.

If $u'$ and $\Gamma - \operatorname{st}_{\Gamma^e}(z_i)$ are in different components of $A_{k-1} - \operatorname{st}_{\Gamma^e}(z_i)$, then the component of $P \cap A_{k - 1}$ starting from $\Gamma - \operatorname{st}_{\Gamma^e}(z_i)$ connects either $\operatorname{st}_{\Gamma^e}(z_i)$ or $\Gamma^g - \operatorname{st}_{\Gamma^e}(z_i)$ so that this component intersects $\operatorname{st}_{\Gamma^e}(z_i)$.
So $P$ intersects $\operatorname{st}_{\Gamma^e}(z_i)$.
Similarly, if $u'$ and $\Gamma^g - \operatorname{st}_{\Gamma^e}(z_i)$ are in different components of $A_{k-1} - \operatorname{st}_{\Gamma^e}(z_i)$, then $P$ intersects $\operatorname{st}_{\Gamma^e}(z_i)$.

Hence, every path joining $\Gamma - \operatorname{st}_{\Gamma^e}(z_i)$ and $\Gamma^g - \operatorname{st}_{\Gamma^e}(z_i)$ in $A_k$ intersects $\operatorname{st}_{\Gamma^e}(z_i)$.
This implies for all $k$, $\operatorname{st}_{\Gamma^e}(z_i)$ separates $\Gamma - \operatorname{st}_{\Gamma^e}(z_i)$ from $\Gamma^g - \operatorname{st}_{\Gamma^e}(z_i)$ in $A_k$.
Therefore, $\operatorname{st}_{\Gamma^e}(z_i)$ separates $\Gamma - \operatorname{st}_{\Gamma^e}(z_i)$ from $\Gamma^g - \operatorname{st}_{\Gamma^e}(z_i)$ in $\Gamma^e$.
\end{proof}

\begin{lem}[Lemma 3.5(7) \cite{kim2013embedability}]
The extension graph $\Gamma^e$ is a quasi-tree. In particular, it is $\delta$-hyperbolic.
\end{lem}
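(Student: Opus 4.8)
The plan is to verify the bottleneck criterion of Manning for quasi-trees, applied to $\Gamma^e$ regarded as a metric graph; since simplicial trees are $0$-hyperbolic and $\delta$-hyperbolicity is a quasi-isometry invariant of geodesic spaces \cite{MR1744486}, this yields the statement. First dispose of a degenerate case: if $\Gamma$ has a central vertex $c$, then $c$ commutes with every element of $A(\Gamma)$, so $[c,v^g]=1$ for all $v\in V(\Gamma)$ and $g\in A(\Gamma)$; hence the vertex $c$ of $\Gamma^e$ is adjacent to every other vertex, $\diam(\Gamma^e)\le 2$, and the statement is trivial. So assume $\Gamma$ has no central vertex, so that Lemmas \ref{lem:star} and \ref{lem:sylsep} are available. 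Since $A(\Gamma)$ acts on $\Gamma^e$ by isometries sending $\Gamma^h$ to $\Gamma^{hk}$, after translating by a suitable group element we may assume the two given vertices are $x=u\in V(\Gamma)$ and $y=v^g$ for some $u,v\in V(\Gamma)$ and $g\in A(\Gamma)$. (It suffices to treat pairs of vertices; the general case costs an additive constant.)

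Write $D:=\diam(\Gamma)$ and fix a syllable decomposition $g=s_n\cdots s_1$. Let $v_i$ be the vertex supporting $s_i$, put $h_i:=s_i\cdots s_1$, and set $z_i:=v_i^{h_i}$ as in Lemma \ref{lem:sylsep}. Since $s_i$ is a power of the generator $v_i$ we have $v_i^{s_i}=v_i$, so $z_1=v_1\in V(\Gamma)$ and $z_n=v_n^{g}\in\Gamma^g$, while $z_i$ and $z_{i+1}$ are the images of $v_i$ and $v_{i+1}$ under the isometry $w\mapsto w^{h_i}$ of $\Gamma^e$ (using again $v_{i+1}^{s_{i+1}}=v_{i+1}$); hence $d_{\Gamma^e}(z_i,z_{i+1})\le d_\Gamma(v_i,v_{i+1})\le D$, and likewise $d_{\Gamma^e}(x,z_1)\le D$ and $d_{\Gamma^e}(z_n,y)\le D$. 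In particular concatenating these short paths shows $\Gamma^e$ is connected, so "geodesic" below makes sense. Finally, Lemma \ref{lem:sylsep} asserts that $\operatorname{st}_{\Gamma^e}(z_i)$ separates $\Gamma-\operatorname{st}_{\Gamma^e}(z_i)$ from $\Gamma^g-\operatorname{st}_{\Gamma^e}(z_i)$; since $x\in\Gamma$ and $y\in\Gamma^g$, it follows that for each $i$ every path in $\Gamma^e$ from $x$ to $y$ comes within distance $1$ of $z_i$: if neither endpoint lies in $\operatorname{st}_{\Gamma^e}(z_i)$ the path must cross that star, and otherwise the offending endpoint is itself within $1$ of $z_i$.

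Now fix a geodesic $\gamma\colon[0,L]\to\Gamma^e$ from $x$ to $y$, where $L=d_{\Gamma^e}(x,y)$. Applying the last observation to $\gamma$, for each $i$ there is $t_i\in[0,L]$ with $d_{\Gamma^e}(\gamma(t_i),z_i)\le 1$, and then the distance bounds above give $t_1\le D+1$, $L-t_n\le D+1$, and $|t_{i+1}-t_i|\le D+2$. Suppose $L>3D+4$. Then $t_1<L/2<t_n$, and since consecutive $t_i$ differ by at most $D+2$ there must be some index $i$ with $|t_i-L/2|\le (D+2)/2$: otherwise the $t_i$ would split into those below $L/2-(D+2)/2$ (including $i=1$) and those above $L/2+(D+2)/2$ (including $i=n$), forcing a consecutive pair to jump by more than $D+2$. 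For such $i$, the midpoint $m:=\gamma(L/2)$ satisfies $d_{\Gamma^e}(m,z_i)\le (D+2)/2+1$, so every path from $x$ to $y$ comes within $(D+6)/2$ of $m$. If instead $L\le 3D+4$, then $m$ is within $L/2\le(3D+4)/2$ of $x$, which lies on every path from $x$ to $y$. Thus with $\Delta:=2D+3$, every pair of vertices has a geodesic whose midpoint is within $\Delta$ of every path joining them; this is Manning's bottleneck property, so $\Gamma^e$ is quasi-isometric to a tree, hence $\delta$-hyperbolic.

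The essential geometry is already packaged in Lemma \ref{lem:sylsep}, which supplies the finitely many "cut stars" $\operatorname{st}_{\Gamma^e}(z_i)$ through which any path between the two copies $\Gamma$ and $\Gamma^g$ must pass. The only remaining delicate point is positioning one of these cut stars near the midpoint of a geodesic, and the subtlety there is that the parameters $t_i$ need not be monotone along $\gamma$; this is why the placement argument is run through the two facts that consecutive $t_i$ are $(D+2)$-close and that the extreme ones are pinned within $D+1$ of the endpoints, rather than through a naive intermediate-value statement.
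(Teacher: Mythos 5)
Your argument is correct, but note that the paper does not actually prove this lemma: it is imported wholesale from Kim--Koberda \cite{kim2013embedability}, so there is no in-text proof to parallel. What you have written is essentially a self-contained reconstruction of the Kim--Koberda argument, repackaged through the paper's own Lemma \ref{lem:sylsep}: the separating stars $\operatorname{st}_{\Gamma^e}(z_i)$ give a chain of ``bottlenecks'' between $\Gamma$ and $\Gamma^g$, and Manning's bottleneck criterion converts this into the quasi-tree property. The quantitative bookkeeping checks out: $z_1=v_1\in\Gamma$ and $z_n=v_n^{g}\in\Gamma^g$ because each $s_i$ is a power of $v_i$; consecutive $z_i$, $z_{i+1}$ are the images of $v_i$, $v_{i+1}$ under conjugation by $s_i\cdots s_1$, hence lie within $\diam(\Gamma)$ of each other; and you correctly avoid the trap of assuming the parameters $t_i$ are monotone along the geodesic, replacing an intermediate-value statement by the jump argument with the $(D+2)$-bound on consecutive gaps. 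Three small points if this were to be inserted into the paper. First, Manning's bottleneck criterion is not in the bibliography and would need a citation (J.\ Manning, \emph{Geometry of pseudocharacters}, Theorem 4.6). Second, the reduction from arbitrary points of the geometric realization to pairs of vertices, which you wave off as ``an additive constant,'' deserves one more sentence: extend a path between interior points to nearby vertices along the chosen geodesic and compare the two midpoints, which lie on the same geodesic at parameters differing by a bounded amount; your argument in fact bounds the bottleneck constant for the midpoint of \emph{every} geodesic between the two vertices, which is what makes this routine. Third, Lemma \ref{lem:sylsep} as stated carries no ``no central vertex'' hypothesis, so your separate (and correct) treatment of the central-vertex case, where $\diam(\Gamma^e)\le 2$, is harmless but not forced by that lemma; it is only Lemma \ref{lem:star}, which you never actually invoke, that needs the hypothesis.
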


\subsubsection{The simplicial action of $A(\Gamma)$ on $\Gamma^e$} 

We define a \emph{right action} of $A(\Gamma)$ on $\Gamma^e$ by $$g: x \mapsto x^g$$ for all $g \in A(\Gamma)$ and $x \in \Gamma^e$.
This definition can be extended as a simplicial action of $A(\Gamma)$ on $\Gamma^e$.
If $d_{\Gamma^e}$ is the edge metric on $\Gamma^e$, then the action is an isometric action, so each element of $A(\Gamma)$ has translation length on $\Gamma^e$.
An element $g \in A(\Gamma)$ is said to be \emph{elliptic} if some orbit of $\langle g \rangle$ is bounded.
On the other hand, an element $g$ of $A(\Gamma)$ is \emph{loxodromic} if the translation length, denoted by $\tau(g)$, is positive.

Kim--Koberda \cite[Theorem 30]{kim2014geometry} proved the action of $A(\Gamma)$ on $\Gamma^e$ is acylindrical.
So each element of $A(\Gamma)$ is elliptic or loxodromic, which was shown by Bowditch \cite[Lemma 2.2]{Bowditch08}.
The following proposition provides some conditions which tell us when an element of $A(\Gamma)$ is elliptic.

\begin{prop}[Lemma 34, Theorem 35 \cite{kim2014geometry}] \label{prop:ellip}
Let $g$ be a cyclically reduced element of $A(\Gamma)$.
Then the following are equivalent.
\begin{enumerate}
\item $g$ is elliptic.
\item $\operatorname{supp}(g)$ is contained in a join of $\Gamma$.
\item The sequence $(\| g^n \|_{\rm st})_{n \in \mathbb{N}}$ is bounded.
\end{enumerate}
\end{prop}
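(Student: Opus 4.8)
The plan is to prove the cycle $(2)\Rightarrow(3)\Rightarrow(1)\Rightarrow(2)$, after one reduction. Since the action of $A(\Gamma)$ on $\Gamma^e$ is acylindrical \cite[Theorem 30]{kim2014geometry} and $\Gamma^e$ is $\delta$-hyperbolic, no element of $A(\Gamma)$ acts parabolically (for acylindrical actions on hyperbolic spaces, every element is elliptic or loxodromic), so $g$ is elliptic exactly when the $\langle g\rangle$-orbit of one (equivalently, every) vertex of $\Gamma^e$ is bounded; I use this reformulation of (1) throughout. For $(3)\Rightarrow(1)$ I would first record the estimate $d_{\Gamma^e}(x,x^h)\le 2\diam(\Gamma)\,\|h\|_{\rm st}$ for every $x\in V(\Gamma)$ and $h\in A(\Gamma)$: taking a star word decomposition $h=w_k\cdots w_1$ with $\supp(w_j)\subseteq\st_\Gamma(u_j)$ (Lemma \ref{lem:st}), the word $w_j$ commutes with $u_j$, so $u_j^{w_j}=u_j$ and $d_{\Gamma^e}(x,x^{w_j})\le d_{\Gamma^e}(x,u_j)+d_{\Gamma^e}(u_j^{w_j},x^{w_j})=2\,d_{\Gamma^e}(x,u_j)\le 2\diam(\Gamma)$, using $d_{\Gamma^e}\le d_\Gamma$ on $V(\Gamma)$ and that $w_j$ acts isometrically; a telescoping sum over the partial products $e,w_1,w_2w_1,\dots$ gives the estimate. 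With $h=g^n$ this shows that (3) forces the $\langle g\rangle$-orbit of $x$ to be bounded, hence $g$ is elliptic.

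For $(2)\Rightarrow(3)$, suppose $\supp(g)$ is contained in a join $J_1\ast J_2$ of $\Gamma$. Then $g$ lies in the right-angled Artin subgroup of $A(\Gamma)$ generated by $V(J_1)\cup V(J_2)$, which equals $A(J_1)\times A(J_2)$, so $g=g_1g_2$ with $\supp(g_i)\subseteq V(J_i)$ and $[g_1,g_2]=1$; hence $g^n=g_1^ng_2^n$ for all $n$. For any $u_2\in V(J_2)$, every vertex of $J_1$ is joined to $u_2$ in $\Gamma$, so $\supp(g_1^n)=\supp(g_1)\subseteq\st_\Gamma(u_2)$, i.e.\ $g_1^n$ is represented by a star word; symmetrically for $g_2^n$. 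Therefore $\|g^n\|_{\rm st}\le 2$ for every $n$ (and $\le 1$ if $\supp(g)$ already lies in the star of a single vertex), which is (3).

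The substantial implication is $(1)\Rightarrow(2)$, which I would prove in contrapositive form: assuming $\supp(g)$ is not contained in any join of $\Gamma$, I must exhibit a vertex with unbounded $\langle g\rangle$-orbit in $\Gamma^e$, after which acylindricity finishes the argument. The tool is Lemma \ref{lem:sylsep} applied to a syllable decomposition of $g^n$: it produces vertices $z_1,\dots,z_N$, with $N$ of order $\|g^n\|_{\rm syl}$, whose stars each separate the copy $\Gamma$ from $\Gamma^{g^n}$, so that for a fixed $v\in\supp(g)$ every geodesic from $v$ to $v^{g^n}$ must cross each $\st_{\Gamma^e}(z_i)$ not containing an endpoint. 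Using that a vertex commutes with its own powers one computes $z_{i+1}=v_{i+1}^{\,s_i\cdots s_1}$, whence $d_{\Gamma^e}(z_i,z_{i+1})=d_{\Gamma^e}(v_i,v_{i+1})\le\diam(\Gamma)$, so consecutive barriers are uniformly close. The remaining step is to show that, for $n$ large, these barriers cannot all stay inside a set of bounded diameter; since the chain attached to $g^n$ is, up to bounded error at the seams, the $n$-fold iterate under $g$ of the chain attached to $g$ (here cyclic reducedness of $g$ is used), a uniformly bounded family of barriers would make the $\langle g\rangle$-orbit of $v$ bounded, and unwinding this periodicity would force $\supp(g)$ into the star of a single vertex, contradicting the hypothesis. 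This last implication is the combinatorial heart of Kim--Koberda's Lemma~34 and Theorem~35 in \cite{kim2014geometry}; it can alternatively be obtained from the splitting theory of right-angled Artin groups, using a Bass--Serre tree of $A(\supp(g))$ (from a nontrivial free-product decomposition when $\supp(g)$ is disconnected, and from a visual amalgam over a vertex-link subgroup when $\supp(g)$ is connected but not a join) on which $g$ acts loxodromically.

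I expect this last point to be the only genuine obstacle. The implications $(2)\Rightarrow(3)$ and $(3)\Rightarrow(1)$ are formal once one has the star-length/distance estimate and the product decomposition along a join, and Lemma \ref{lem:sylsep} reduces $(1)\Rightarrow(2)$ to a purely group-theoretic statement; but promoting ``$\supp(g)$ is not contained in a join'' to ``the $\langle g\rangle$-orbit in $\Gamma^e$ is unbounded'' genuinely requires the normal-form and ping-pong combinatorics of right-angled Artin groups, equivalently their splitting theory over stars and links, exactly as in \cite{kim2014geometry}.
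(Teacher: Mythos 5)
Note first that the paper contains no proof of this proposition to compare against: it is imported verbatim from Kim--Koberda \cite{kim2014geometry} (Lemma 34 and Theorem 35), and the paper only uses it, never reproves it. Judged as a standalone argument, your proposal is correct and complete for two of the three implications. The displacement bound $d_{\Gamma^e}(x,x^h)\le 2\diam(\Gamma)\,\|h\|_{\rm st}$ is sound: each star word $w_j$ fixes its central vertex $u_j$ (since $w_j$ commutes with $u_j$), the telescoping over the partial products $w_j\cdots w_1$ only uses that the action is by isometries, and with $h=g^n$ this gives $(3)\Rightarrow(1)$ directly (a bounded orbit already forces $\tau(g)=0$, so acylindricity is not even needed there). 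The join decomposition $g=g_1g_2$ with $[g_1,g_2]=1$ and each $g_i^n$ a star word is also correct and yields $(2)\Rightarrow(3)$ with $\|g^n\|_{\rm st}\le 2$.

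The genuine gap is $(1)\Rightarrow(2)$, which you do not actually prove. The preparatory material is fine (the separating stars $\st_{\Gamma^e}(z_i)$ from Lemma \ref{lem:sylsep}, the identity $z_{i+1}=v_{i+1}^{s_i\cdots s_1}$, and the bound $d_{\Gamma^e}(z_i,z_{i+1})\le\diam(\Gamma)$), but the decisive step --- that a bounded orbit, after ``unwinding the periodicity,'' forces $\supp(g)$ into the star of a single vertex --- is only asserted, and as stated it overshoots: boundedness of the orbit alone cannot force the support into a star, since a cyclically reduced element supported on an induced square $K_2\ast K_2$ of $\Gamma$ is elliptic while its support lies in no star. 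So any correct argument must invoke the not-a-join hypothesis in a concrete combinatorial way (this is exactly the content of Kim--Koberda's nontrivial direction), and you do not supply it; instead you fall back on citing Lemma 34/Theorem 35 of \cite{kim2014geometry}, i.e.\ on the statement being proved, which makes this direction circular as written. Since the paper itself delegates precisely this point to the same citation, your proposal is not weaker than the paper's treatment, but it is not a self-contained proof of the proposition either; the alternative route you mention via splitting theory (a Bass--Serre tree for a free-product or visual amalgam decomposition of $A(\supp(g))$) is plausible but would need to be carried out, not just named.
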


\subsection{Syllable decomposition}

In this section, we deal with the algorithmic property of syllable decomposition of an element.

\subsubsection{Elementary moves on syllable length}

After Newman's insight about moves on the set of words of a free group \cite{MR7372}, Hermiller--Meier \cite{MR1314099} introduced Newman's idea to graph products of groups, which handles all right-angled Artin groups.
For each $g \in A(\Gamma)$, there are two elementary moves on the set of words representing $g$: one move is ``switching a consecutive commuting alphabets'' (transposition); and the other is ``removing a consecutive inverse pair'' (cancellation).
Hermiller--Meier showed for every word $w$ and a reduced word $w'$, if $w$ and $w'$ represent $g$, then $w$ can be reformed to $w'$ by a combination of finitely many transpositions and cancellations.
Notice the length of a word decreases if and only if a cancellation acts on it.
So, if we start from a reduced word, there is no possible cancellation.
That is, for two reduced words that represent $g$, one can be reformed to the other by finitely many transpositions.

Hermiller--Meier's system also works on syllable decompositions and syllable length.
In detail, for an element $g \in A(\Gamma)$, let us consider the set of all products of (finitely many) syllables that represent $g$. Define a length of each product by the number of syllables. Then every word that represents $g$ is contained in this set since each alphabet can be seen as a syllable.
Then there are three elementary moves ``switching two consecutive commuting syllables'' (syllable transposition), ``summing up two consecutive syllables of a same support'' (syllable summation) and ``removing a zero power'' (syllable cancellation) such that a product of syllables can be reformed into any syllable decomposition by a composition of finitely many moves.
Similar to the above, the number of syllables decreases if and only if a syllable summation or a syllable cancellation acts on it. So, for two syllable decompositions that represent $g$, one can be reformed to the other by finitely many syllable transpositions.

\begin{lem}[Hermiller--Meier \cite{MR1314099}] \label{lem:syl_express}
For every $g \in A(\Gamma)$ and two syllable decompositions that represent $g$, one can be reformed to the other by a composition of syllable transpositions.
\end{lem}

\subsubsection{Some characteristic points of an isometry}

Let $s_n \dots s_1$ be a syllable decomposition of an element $g$.
For every $1 \leq i \leq j \leq n$, we call the subword $s_j \dots s_i$ a \emph{syllable subword} of $s_n \dots s_1$.
We say a syllable subword $s_j \dots s_i$ is \emph{rightmost} if $i = 1$.

For distinct syllable decompositions $s_n \dots s_1$ and $s_{\sigma(n)} \dots s_{\sigma(1)}$ that represent $g$, some rightmost syllable subword of $s_n \dots s_1$ may not be reformed to any rightmost syllable subword of $s_{\sigma(n)} \dots s_{\sigma(1)}$.
For example, assume $s_j$ and $s_{j+1}$ commute for some $1 \leq j < n$ and $\tau$ is the transposition of $j$ and $j+1$.
Let us compare $s_j \dots s_1$ with $s_{\tau(i)} \dots s_{\tau(1)}$ for each $i$.
If $i \neq j$, then $\| s_j \dots s_1 \|_{\rm syl} \neq \| s_{\tau(i)} \dots s_{\tau(1)} \|_{\rm syl}$, so they cannot represent a same element.
Otherwise, the difference $s_j \dots s_1 (s_{\tau(j)} \dots s_{\tau(1)})^{-1} = s_j s_{j+1}^{-1}$ does not represent the identity because $s_j$ and $s_{j+1}$ have different supports.
Therefore, $s_j \dots s_1$ cannot be represented by any syllable subwords of $s_{\tau(n)} \dots s_{\tau(1)}$.

\begin{lem} \label{lem:additional_lem}
Let $g \in A(\Gamma)$ be given, and let $g = s_n \dots s_1 = s_{\sigma(n)} \dots s_{\sigma(1)}$ denote two syllable decompositions.
For $i_0, j_0 \in \{1, \dots, n\}$, if each of $s_{i_0} \dots s_1$ and $s_{\sigma(j_0)} \dots s_{\sigma(1)}$ cannot be reformed to a syllable subword of the other, then there are nontrivial commuting syllable subwords $w_1, w_2$ of $g$ such that the following hold:
\begin{enumerate}
\item $s_{i_0} \dots s_1 = w_1 w_0$ and $s_{\sigma(j_0)} \dots s_{\sigma(1)} = w_2 w_0$ for some rightmost syllable subword $w_0$ of a syllable decomposition of $g$, and
\item the join $\operatorname{supp}(w_1) \oplus \operatorname{supp}(w_2)$ is embedded in $\Gamma$.
\end{enumerate}
In addition, if $i_0 = \sigma(j_0)$, then $s_{i_0}$ commutes with $w_1$ and $w_2$.
\end{lem}

\begin{proof}
Before we start, let us define some notations only used for this proof.
For all $\ell, \ell' \in \{1, \dots, n\}$, write $\tau_{\ell, \ell'}$ by a transposition for commuting syllables $s_{\ell}$ and $s_{\ell'}$.
Write $I$ as the intersection of $\{ 1, \dots, i_0 \}$ and $\{ \sigma(1), \dots, \sigma(j_0) \}$.
By the hypothesis, $\{ 1, \dots, i_0 \} \setminus I$ and $\{ \sigma(1), \dots, \sigma(j_0) \} \setminus I$ are nonempty.

If $i \not\in I$ and $i+1 \in I$ for some $i$, then a product of transpositions that represents $\sigma$ contains $\tau_{i, i+1}$, that is, $s_i$ and $s_{i+1}$ commute, so one can move $s_i$ to the left of $s_{i+1}$.
After this movement, we need to rewrite $s_{i_0} \dots s_1$ by $s_{i_0} \dots (s_i s_{i+1}) \dots s_1$ and also redefine $\sigma$ by $\sigma \cdot \tau_{i, i+1}$.
Let us repeat such steps until all syllables indexed by $I$ locate on the rightmost of $s_{i_0} \dots s_1$.
Let $w_0$ denote the rightmost syllable subword consisting of all syllables indexed by $I$, and let $w_1$ be a syllable subword that consists of the remaining syllables.
Then we obtain an equation $s_{i_0} \dots s_1 = w_1 w_0$.

Similarly, whenever $\sigma(j) \not\in I$ and $\sigma(j+1) \in I$, one can move $s_{\sigma(j)}$ to the left of $s_{\sigma(j+1)}$, rewrite $s_{\sigma(j_0)} \dots s_{\sigma(1)}$ by $s_{\sigma(j_0)} \dots (s_{\sigma(j)} s_{\sigma(j+1)}) \dots s_{\sigma(1)}$ and redefine $\sigma$ by $\tau_{\sigma(j) \sigma(j+1)} \cdot \sigma$.
After iterating such steps, all syllables indexed by $I$ will be located on the rightmost of $s_{\sigma(j_0)} \dots s_{\sigma(1)}$ that also form the syllable subword $w_0$ defined on the above paragraph.
Let $w_2$ be a syllable subword consisting of the remaining syllables.
Then one write $s_{\sigma(j_0)} \dots s_{\sigma(1)} = w_2 w_0$.

By the hypothesis of the statement, $\operatorname{supp}(w_1)$ and $\operatorname{supp}(w_2)$ are nonempty.
Note every syllable in $w_1$ has index at most $i_0$ while an index of a syllable in $w_2$ is larger than $i_0$.
So for all $i \in \{1, \dots, i_0\} \setminus I$ and $j \in \{\sigma(1), \dots, \sigma(j_0)\} \setminus I$, a product of transpositions representing $\sigma$ contains $\tau_{i, j}$.
This implies $w_1$ and $w_2$ commute, therefore, the join of $\operatorname{supp}(w_1)$ and $\operatorname{supp}(w_2)$ is embedded into $\Gamma$.

If $i_0 = \sigma(j_0)$, then a product of transpositions representing $\sigma$ contains syllable transpositions $\tau_{i, i_0}$ for all $i \in ( \{ 1, \dots, i_0 \} \cup \{ \sigma(1), \dots, \sigma(j_0) \} ) \setminus I$.
So $s_{i_0}$ commutes with $w_1$ and $w_2$.
Hence, $w_2 w_1^{-1}$ is a star word that commutes with $s_{i_0}$.
\end{proof}

The additional statement of Lemma \ref{lem:additional_lem} deduces the next lemma.

\begin{lem} \label{lem:z_invariant}
Let $g = s_n \dots s_1 = s_{\sigma(n)} \dots s_{\sigma(1)}$ be two syllable decompositions of $g \in A(\Gamma)$.
For each $i$, let $v_i$ denote the vertex supporting $s_i$, and write $z_i := v_i^{s_i \dots s_1}$.
Then we have $$z_{\sigma(i)} = v_{\sigma(i)}^{s_{\sigma(i)} \dots s_{\sigma(1)}}$$ for every $i$.
\end{lem}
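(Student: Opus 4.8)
The plan is to reduce to the case in which the two decompositions differ by interchanging a single pair of adjacent syllables, and then settle that case by a short computation with conjugates.

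\emph{Reduction.} By Lemma~\ref{lem:syl_express} together with the Hermiller--Meier remark following its proof, $\sigma$ factors as $\sigma=\tau_k\cdots\tau_1$ with each $s_{\tau_j\cdots\tau_1(n)}\cdots s_{\tau_j\cdots\tau_1(1)}$ again a syllable decomposition of $g$; inspecting the bubble-sort construction in that proof, one sees the $\tau_j$ may be chosen so that the $j$-th of these decompositions arises from the $(j-1)$-th by interchanging two \emph{adjacent} syllables. Such an interchange forces those two syllables to be powers of distinct vertices---otherwise they fuse, contradicting minimality of $\|g\|_{\rm syl}$---and hence to commute. Let $D_0,\dots,D_k$ be the resulting chain, with $D_0$ the first given decomposition and $D_k$ the second. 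For a syllable decomposition $D\colon g=t_n\cdots t_1$, with $t_\ell$ a power of the vertex $u_\ell$, write $Z^D_\ell:=u_\ell^{t_\ell\cdots t_1}$; then $Z^{D_0}_\ell=z_\ell$ and $Z^{D_k}_\ell=v_{\sigma(\ell)}^{s_{\sigma(\ell)}\cdots s_{\sigma(1)}}$, so the assertion to prove is exactly $Z^{D_k}_\ell=Z^{D_0}_{\sigma(\ell)}$ for all $\ell$.

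\emph{Base case.} Suppose $D\colon g=s_n\cdots s_1$ and $D'\colon g=s_n\cdots s_{m+2}\,s_m\,s_{m+1}\,s_{m-1}\cdots s_1$ differ by swapping $s_m$ and $s_{m+1}$, so that $s_ms_{m+1}=s_{m+1}s_m$ and $v_m\neq v_{m+1}$; put $\mu=(m,m+1)$, so that the $\ell$-th syllable of $D'$ is $s_{\mu(\ell)}$. I would check $Z^{D'}_\ell=z_{\mu(\ell)}$ for all $\ell$. For $\ell<m$ this is immediate since $\mu$ fixes $1,\dots,\ell$; for $\ell>m+1$ it holds because $s_ms_{m+1}=s_{m+1}s_m$ makes the conjugating word $s_{\mu(\ell)}\cdots s_{\mu(1)}$ equal to $s_\ell\cdots s_1$. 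For $\ell=m+1$, one has $Z^{D'}_{m+1}=v_m^{s_ms_{m+1}s_{m-1}\cdots s_1}=v_m^{s_{m+1}s_ms_{m-1}\cdots s_1}=v_m^{s_ms_{m-1}\cdots s_1}=z_m=z_{\mu(m+1)}$, the third equality because $s_{m+1}$ is a power of $v_{m+1}$, which commutes with $v_m$. For $\ell=m$, one has $Z^{D'}_m=v_{m+1}^{s_{m+1}s_{m-1}\cdots s_1}=v_{m+1}^{s_{m-1}\cdots s_1}$ while $z_{m+1}=v_{m+1}^{s_{m+1}s_ms_{m-1}\cdots s_1}=v_{m+1}^{s_ms_{m-1}\cdots s_1}=v_{m+1}^{s_{m-1}\cdots s_1}$, using that $s_{m+1}$ is a power of $v_{m+1}$ and $s_m$ a power of $v_m$, both commuting with $v_{m+1}$; hence $Z^{D'}_m=z_{m+1}=z_{\mu(m)}$.

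\emph{Composition and conclusion.} If $D'$ is reindexed from $D$ by $\rho$---meaning the $\ell$-th syllable of $D'$ equals the $\rho(\ell)$-th of $D$---and $D''$ from $D'$ by $\mu$, then $D''$ is reindexed from $D$ by $\rho\mu$, and $Z^{D''}_\ell=Z^{D'}_{\mu(\ell)}=Z^D_{\rho(\mu(\ell))}$ by combining the base case (for $D'\to D''$) with the inductive hypothesis. Running this along $D_0,\dots,D_k$, whose composite reindexing from $D_0$ is $\sigma$, gives $Z^{D_k}_\ell=Z^{D_0}_{\sigma(\ell)}$, which is the lemma. The only genuinely delicate point is the reduction step: verifying that Lemma~\ref{lem:syl_express} can be invoked so that consecutive members of the chain really differ by a single adjacent-syllable swap, which is what makes the base case applicable at each stage. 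Everything after that is the bookkeeping above together with the elementary conjugacy manipulations in the base case.
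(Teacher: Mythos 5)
Your proof is correct and takes essentially the same route as the paper's: establish the identity for a single adjacent swap of commuting syllables by a direct conjugation computation, then propagate it along the chain of adjacent transpositions supplied by Lemma~\ref{lem:syl_express} and the Hermiller--Meier remark. Your explicit reindexing bookkeeping and the remark that the swapped syllables lie on distinct commuting vertices merely spell out what the paper's proof leaves implicit.
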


\begin{proof}
By Lemma \ref{lem:additional_lem}, one can write $z_{\sigma(i)} = v_{\sigma(i)}^{w_1 w_0}$ and $v_{\sigma(i)}^{s_{\sigma(i)} \dots s_{\sigma(1)}} = v_{\sigma(i)}^{w_2w_0}$ for some $w_0, w_1, w_2$ such that $w_1$ and $w_2$ are star words of $v_{\sigma(i)}$.
So we have $$ z_{\sigma(i)} =  v_{\sigma(i)}^{w_1 w_0} = v_{\sigma(i)}^{w_0} = v_{\sigma(i)}^{w_2 w_0} = v_{\sigma(i)}^{s_{\sigma(i)} \dots s_{\sigma(1)}}.$$
Hence, the statement holds.
\end{proof}

\subsubsection{Cyclically syllable-reduced element}

We say an element $g \in A(\Gamma)$ is \emph{cyclically syllable-reduced} if it has the minimum syllable length in the conjugacy class of $g$, that is, $\| g \|_{\rm syl} = \min_{h \in A(\Gamma)} \|g^h\|_{\rm syl}$.
Every cyclically syllable-reduced element is cyclically reduced, but the converse does not hold.
For example, if two vertices $u$ and $v$ do not commute, then the word $uvu$ is cyclically reduced but not cyclically syllable-reduced. 
This is because $u^2v$ is conjugate to $uvu$ and has syllable length $2$ while the syllable length of $uvu$ is $3$.

\begin{lem} \label{lem:cyclic_syllable}
If $g \in A(\Gamma)$ is a cyclically syllable-reduced element that is not represented by a star word, then we have $\| g^m \|_{\rm syl} = \lvert m \rvert \cdot \| g \|_{\rm syl}$ for every $m \in \mathbb{Z}$.
\end{lem}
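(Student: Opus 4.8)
The plan is to establish the two inequalities $\|g^{m}\|_{\rm syl}\le|m|\cdot\|g\|_{\rm syl}$ and $\|g^{m}\|_{\rm syl}\ge|m|\cdot\|g\|_{\rm syl}$ separately. The first is immediate: concatenating $|m|$ copies of a fixed syllable decomposition of $g$ when $m>0$, or of $g^{-1}$ when $m<0$ (inverting and reversing a syllable decomposition of $g$ yields one of $g^{-1}$ of the same length), presents $g^{m}$ as a product of $|m|\cdot\|g\|_{\rm syl}$ syllables. For the reverse inequality it is enough to treat $m\ge 2$: the cases $m=0,1$ are trivial, and for $m<0$ one replaces $g$ by $g^{-1}$, which is again cyclically syllable-reduced (both $\|g^{-1}\|_{\rm syl}$ and the minimum of $\|\cdot\|_{\rm syl}$ over the conjugacy class of $g^{-1}$ equal the corresponding quantities for $g$, using $\|h^{-1}\|_{\rm syl}=\|h\|_{\rm syl}$) and again not a star word (it has the same support).

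Fix $m\ge 2$ and a minimal syllable decomposition $g=s_n\cdots s_1$, with $v_i$ the vertex supporting $s_i$. I claim the $mn$-syllable expression $g^{m}=(s_n\cdots s_1)^{m}$ already has minimal syllable length, which gives exactly $\|g^{m}\|_{\rm syl}=mn=m\|g\|_{\rm syl}$. Here I use the standard normal-form fact that a reduced word presented as a product of $N$ syllables has syllable length strictly less than $N$ only when two of those syllables sharing a supporting vertex $v$ can be commuted into adjacent positions (and then merged), and that this is possible only if every syllable lying strictly between the two commutes with $v$, since a syllable not commuting with $v$ cannot be moved past a syllable supported on $v$ in any syllable decomposition. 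Note that $(s_n\cdots s_1)^{m}$ is indeed a reduced word, because $g$ is cyclically reduced. So assume for contradiction that two syllables of $g^{m}$ with common supporting vertex $v$ can be brought adjacent; write them as the copy of $s_i$ in some $g$-block and the copy of $s_j$ in a $g$-block $d\ge 0$ blocks later, with $v=v_i=v_j$, and recall that then every syllable strictly between these two commutes with $v$.

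The heart of the proof is a case analysis on $d$, each case invoking a different hypothesis. If $d\ge 2$, then an entire $g$-block lies strictly between the two syllables, so \emph{every} $s_\ell$ commutes with $v$; hence $\operatorname{supp}(g)\subseteq\operatorname{st}_\Gamma(v)$ and $g$ is a star word, contrary to hypothesis. If $d=0$, then $s_i$ and $s_j$ are distinct syllables of $g$ itself, supported on $v$, with everything between them commuting with $v$ --- so $g=s_n\cdots s_1$ is not a minimal syllable decomposition, a contradiction. If $d=1$, the syllables strictly between the two correspond (with multiplicities, labeling each copy of $s_\ell$ by $\ell$) to a cyclic interval of the cyclic word $(s_n,\dots,s_1)$ running from just past position $i$ to just before position $j$; if this interval is all of $\{s_1,\dots,s_n\}$ --- which happens exactly when $i$ does not strictly precede $j$ in that cyclic order, in particular whenever $i=j$ --- we again conclude $g$ is a star word; otherwise the interval is a \emph{proper} cyclic arc every syllable of which commutes with $v$, and then cutting the cyclic word of $g$ just before $s_i$ produces a conjugate $\widehat g$ of $g$ in which $s_i$ and $s_j$ are two syllables on $v$ with only that commuting arc between them, so $\|\widehat g\|_{\rm syl}<n$, contradicting that $g$ is cyclically syllable-reduced. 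Since every case is impossible, $\|g^{m}\|_{\rm syl}=mn$, as desired.

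The main obstacle I anticipate is bookkeeping rather than ideas: carefully tracking the left-to-right order of syllables in the $m$-fold concatenation, converting ``the set of syllables strictly between the two'' into a precise (cyclic) subinterval of $\{1,\dots,n\}$, and, in the $d=1$ case, choosing the conjugating element in the correct direction so that the commuting cyclic arc is exactly the ``between'' set of the conjugate $\widehat g$. The commutation fact used above (a syllable not commuting with $v$ keeps its side of every $v$-supported syllable, across all syllable decompositions) should also be stated and justified explicitly, though it is routine.
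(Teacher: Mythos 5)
Your proof is correct and follows essentially the same route as the paper's: assume $(s_n\cdots s_1)^m$ is not a syllable decomposition, extract two syllables with the same supporting vertex $v$ such that every intermediate syllable commutes with $v$, and derive a contradiction either because $\operatorname{supp}(g)\subseteq\operatorname{st}_\Gamma(v)$ (so $g$ is a star word) or because a window of $n$ consecutive syllables containing both is a cyclic conjugate of $g$ with smaller syllable length. Your case split by block distance $d$, and your explicit handling of the easy inequality and of $m\le 1$, amount to finer bookkeeping of the paper's dichotomy $\lvert a-b\rvert\ge n-1$ versus $\lvert a-b\rvert<n-1$.
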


\begin{proof}
For proof by contradiction, suppose $\| g^m \|_{\rm syl} < \lvert m \rvert \cdot \| g \|_{\rm syl}$ for some $m > 0$.
Let $g = s_n \dots s_1$ be a syllable decomposition of $g$.
By supposition, $(s_n \dots s_1)^m$ is not a syllable decomposition of $g^m$.
That is, if we relabel $g^m = (s_n \dots s_1) \dots (s_n \dots s_1)$ as $s_{mn} s_{mn - 1} \dots s_1$, then there exist $1 \leq a < b \leq mn$ such that $\operatorname{supp}(s_a) = \operatorname{supp}(s_b)$ and $[s_a, s_i] = 1$ for all $a \leq i \leq b$.

If $\lvert a - b \rvert \geq n - 1$, then $s_a$ commutes all copies of syllables of $g$ so that $\operatorname{supp}(g) \subseteq \operatorname{st}_\Gamma(v)$ where $v$ is the vertex supporting $s_a$.
It gives $g$ is a star word, which is a contradiction.
If $\lvert a - b \rvert < n - 1$, then choose $a', b'$ satisfying $a' \leq a < b \leq b'$ and $b' - a' = n - 1$.
Then the word $s_{b'} \dots s_{a'}$ is conjugate to $g$, but its syllable length is smaller than $g$.
This also gives a contradiction against the condition of $g$.
So such $s_a$ and $s_b$ cannot exist.
Therefore, the supposition is false, that is, the equality $\| g^m \|_{\rm syl} = \lvert m \rvert \cdot \| g \|_{\rm syl}$ holds for every $m \in \mathbb{Z}$.
\end{proof}

\subsubsection{Cyclically syllable-reduced loxodromic}

If $g  = s_n \dots s_1$ is a syllable decomposition of a cyclically syllable-reduced loxodromic, then $(s_n \dots s_1)^m$ is a syllable decomposition of $g^m$ for every positive integer $m$ by Lemma \ref{lem:cyclic_syllable}.
For each $j \in \{1, \dots, n\}$ and $\ell \in \{1, \dots, m - 1\}$, let $s_{j + \ell n}$ denote a copy of $s_j$.
Then $g^m$ can be written as $s_{mn} \dots s_1$.

This implies every elliptic subword of (a reduced word) representing $g$ is also a subword of a word representing a positive power of $g$.
But for sufficiently large $m$, some elliptic subword of $g^m$ might not be representable as a subword of $g$.
This is because $g^m$ may admit a syllable decomposition that is not a concatenation of syllable decompositions of $g$.
Nonetheless, a power of $g$ has a restriction to form a syllable decomposition as follow.

\begin{lem} \label{lem:syl_perm}
Let $g = s_n \dots s_1$ be a syllable decomposition of a cyclically syllable-reduced loxodromic, and let a positive integer $m$ be given.
Suppose $g^m = s_{mn} \dots s_1$ is a concatenation of $m$ copies of $s_n \dots s_1$ and $s_{\sigma(mn)} \dots s_{\sigma(1)}$ is another syllable decomposition of $g^m$.
Then we have $$\lvert i - \sigma^{-1}(i) \rvert \leq n \lvert V(\Gamma) \rvert$$ for all $i \in \{ 1, \dots, mn \}$.
\end{lem}

\begin{proof}
Consider the complement graph of $\Gamma$, denoted by $\bar{\Gamma}$.
Because $g$ is cyclically reduced and loxodromic, the subgraph induced by $\operatorname{supp}(g)$ is connected in $\bar{\Gamma}$.
Meanwhile, the existence of a loxodromic element implies that $\bar\Gamma$ is connected.

Fix an index $i_0 \in \{ 1, \dots, mn \}$, and let $v$ be the vertex supporting $s_{i_0}$.
Write $I := \{1, \dots, mn\}$, and for $a, b \in \mathbb{R}$, let $(a, b]$ denote the interval $\{ c \mid a < c \leq b \}$.
For each integer $\ell \geq 1$, write $$I_\ell := I \cap (i_0 + (\ell - 1)n, i_0 + \ell n] \cap \{ i \mid d_{\bar\Gamma}(v, v_i) \leq \ell \}$$ and  $I_0 := \{ i_0 \}$.

If $\ell < \ell'$ and $i_0 + \ell'n \leq mn$, then $I_\ell$ and $I_{\ell'}$ are disjoint and $$\{v_j \mid j \in I_\ell\} \subseteq \{v_j \mid j \in I_{\ell'}\}.$$
Because $\operatorname{supp}(g)$ is connected, the induced subgraph of $\{v_i \mid i \in I_\ell\}$ is connected for each $\ell$.
We claim that for each $\ell \geq 1$ and $i \in I_\ell$, there exists $j \in I_{\ell - 1}$ such that $\sigma^{-1}(j) < \sigma^{-1}(i)$.

First, consider the case that $i$ satisfies $v_i \in \{ v_j \mid j \in I_{\ell - 1} \} \cap \{ v_j \mid j \in I_\ell \}$.
If $\sigma^{-1}(j) > \sigma^{-1}(i)$, then by Lemma \ref{lem:syl_express}, there exists a transposition between $s_i$ and $s_j$.
This implies $g^m$ admits a word of $mn$ syllables containing the subword $s_i s_j$ so that $s_{mn} \dots s_{1}$ can be reduced.
That is, we have $\| g \|_{\operatorname{syl}} < mn$, which is a contradiction against Lemma \ref{lem:cyclic_syllable}.
So $\sigma^{-1}(j)$ is less than $\sigma^{-1}(i)$.

The other case is that $v_i$ is not contained in $\{ v_j \mid j \in I_{\ell - 1} \}$.
Then we have $d_{\bar\Gamma}(v, v_i) = \ell$ by definition.
Since the induced subgraph of $\{v_j \mid j \in I_\ell\}$ in $\bar\Gamma$ is connected, there exists $j \in I_{\ell - 1}$ such that $v_j$ and $v_i$ are adjacent in $\bar\Gamma$.
It means $v_i$ and $v_j$ do not commute so that $\sigma^{-1}(j) < \sigma^{-1}(i)$ by Lemma \ref{lem:syl_express}.
So the claim holds.

By the above claim, for every $\ell \geq 1$ and $j \in I_\ell$, there exists a sequence $i_0 = j_0 < j_1 < \dots < j_\ell = j$ such that $\sigma^{-1}(i_0) < \dots < \sigma^{-1}(j_\ell)$.
Note if $\operatorname{diam}(\bar\Gamma) \leq \ell \leq m - i_0/n$, then $I_\ell$ has cardinality $n$ since $\operatorname{supp}(g) \subseteq \bar\Gamma$.
So we have $\sigma^{-1}(i_0) < \sigma^{-1}(j)$ for all $j > i_0 + n \cdot \operatorname{diam}(\bar\Gamma)$.

By the pigeonhole principle, the inequality $\sigma^{-1}(i_0) \leq i_0 + n \cdot \operatorname{diam}(\bar\Gamma)$ holds.
Then we get $\sigma^{-1}(i_0) - i_0 \leq n \cdot \operatorname{diam}(\bar\Gamma)$.
By changing the roles of $\sigma^{-1}(i_0)$ and $i_0$, we can show $i_0 - \sigma^{-1}(i_0) \leq n \cdot \operatorname{diam}(\bar\Gamma)$ in a similar way.
Therefore, we have $\lvert i_0 - \sigma^{-1}(i_0) \rvert \leq n \cdot \operatorname{diam}(\bar\Gamma)$.
\end{proof}

The above lemma indicates all syllable decompositions of powers of $g$ can be constructed by finitely many words.
Furthermore, elliptic subwords of powers of $g$ are finitely many. 

\begin{prop} \label{prop:num_elliptic}
For a cyclically syllable-reduced loxodromic $g$, if $w$ is an elliptic syllable subword of some syllable decomposition representing a positive power of $g$, then we have $\| w \|_{\operatorname{syl}} \leq \| g \|_{\operatorname{syl}}  (2 \lvert V(\Gamma) \rvert + 1)$.
Furthermore, the number of elliptic elements that can be realized by subwords of reduced words representing positive powers of $g$ is finite.
\end{prop}

\begin{proof}
Fix a syllable decomposition $g = s_n \dots s_1$ and $m > 0$.
Let $g^m = s_{mn} \dots s_1$ be the concatenation of $m$ copies of $s_n \dots s_1$.
Suppose $w$ is an elliptic syllable subword of $g^m = s_{\sigma(mn)} \dots s_{\sigma(1)}$ for some $m > 0$.
Set $w := s_{\sigma(i_1)} \dots s_{\sigma(i_0)}$ for some $1 \leq i_0 \leq i_1 \leq mn$.

Assume $\| w \|_{\rm syl} = i_1 - i_0 + 1 > n (2 \lvert V(\Gamma) \rvert + 1)$.
Then we have $i_1 > i_0 + 2n \lvert V(\Gamma) \rvert + n - 1$.
Note the subword $s_{i_0 + n\lvert V(\Gamma) \rvert + n} \dots s_{i_0 + n\lvert V(\Gamma) \rvert + 1}$ of length $n$ is cyclically conjugate to $g$, so this subword is cyclically syllable-reduced loxodromic.
Hence, there exists an index $j$ with $1 \leq j - (i_0 + n \lvert V(\Gamma) \rvert) \leq n$ such that $\operatorname{supp}(s_j) \cup \operatorname{supp}(w)$ does not form a subjoin of $\Gamma$.
This implies either $\sigma^{-1}(j) < i_0$ or $\sigma^{-1}(j) > i_1$.

On the other hand, by Lemma \ref{lem:syl_perm}, we have $\lvert \sigma^{-1}(j) - j \rvert \leq n \lvert V(\Gamma) \rvert$.
So the following inequalities hold:
\begin{align*}
\sigma^{-1}(j) &\leq n \lvert V(\Gamma) \rvert + j \leq i_0 + 2n\lvert V(\Gamma) \rvert + n < i_1 ~\text{and} \\
\sigma^{-1}(j) &\geq j - n \lvert V(\Gamma) \rvert \geq i_0 + 1 > i_0.
\end{align*}
This is a contradiction. 
Therefore, we have $\| w \|_{\rm syl} \leq n (2 \lvert V(\Gamma) \rvert + 1)$.

If $w'$ is an elliptic subword of a reduced word representing $g^m$, then by permuting alphabets, we decompose $g^m = w_4 w_3 w_2 w_1$ as a product of syllable subwords, and we can also write $w' = w_3' w_2'$ such that $[w_2, w_3] = 1$ and $w_i'$ is a subword of $w_i$ for each $i = 1, 2$.
Then the syllable length of $w'$ is also bounded by $n(2 \lvert V(\Gamma) \rvert + 1)$.
Because the syllables of $g^m$ are uniformly bounded powers of generators, the word length of $w'$ is bounded by some number $N$ determined by $g$ and $\Gamma$.
Therefore, the number of elliptic elements that can be realized by subwords of reduced words representing powers of $g$ is finite.
\end{proof}

%
%
\section{Finiteness Property} \label{sec:finiteness}

In this section, we refine Bowditch's theorem \cite[Theorem 1.4]{Bowditch08} by reorganizing his work.
Let $G$ denote a group acting simplicially on a $\delta$-hyperbolic graph $\mathcal{G}$ with the edge metric $d_{\mathcal{G}}$.
An element of $G$ is called a \emph{loxodromic} if its asymptotic translation length with respect to $d_{\mathcal{G}}$ is positive, and an element is called an \emph{elliptic} if it has a bounded orbit.

In contrast to the hyperbolic space $\mathbb{H}^n$, a loxodromic of $\mathcal{G}$ may not preserve a geodesic.
This phenomenon comes from the fact that a geodesic on $\mathcal{G}$ is not convex in common.
We need to adopt another concept weaker than convexity.
A subgraph $\mathcal{H}$ of $\mathcal{G}$ is said to be \emph{weakly convex} if the inclusion $(\mathcal{H}, d_{\mathcal{H}}) \hookrightarrow (\mathcal{G}, d_\mathcal{G})$ is an isometric embedding when $d_{\mathcal{H}}$ is the edge metric on $\mathcal{H}$. Then every geodesic on $\mathcal{G}$ is weakly convex in $\mathcal{G}$.

For a surface and its curve graph, Bowditch \cite{Bowditch08} showed every pseudo-Anosov preserves a weakly convex and locally finite subgraph of the curve graph, which is described as the union of tight geodesics.
For a loxodromic $g$, let a weakly convex locally finite subgraph $\mathcal{A}_g$ be called an \emph{axial subgraph} if $g$ preserves $\mathcal{A}_g$ and the induced action of $\langle g \rangle$ on $\mathcal{A}_g$ is cocompact.

By the \v{S}vac--Milnor lemma (see \cite[Proposition I.8.19]{MR1744486} for instance), $\mathcal{A}_g$ is quasi-isometric to a line, so it has exactly two ends.
That is, some bounded ball of $\mathcal{A}_g$ separates the ends of $\mathcal{A}_g$.
Because $\mathcal{A}_g$ is locally finite, this ball has finitely many vertices.
We call a set of vertices of $\mathcal{A}_g$ an \emph{end-separating set of $\mathcal{A}_g$} if this separates the ends of $\mathcal{A}_g$.
The \emph{width} of $\mathcal{A}_g$ is the minimum of the cardinalities of end-separating sets of $\mathcal{A}_g$.
Bowditch \cite[Theorem 1.1]{Bowditch08} observed that there exists $\kappa$, depending only on the surface, such that every pseudo-Anosov has an axial subgraph of width at most $\kappa$.
We say such a property as the $\kappa$-finiteness property.
The precise definition is as follow.

\begin{defn*}[Finiteness property]
Suppose a group $G$ acts simplicially on a $\delta$-hyperbolic graph $\mathcal{G}$.
\begin{enumerate}
\item The action of $G$ on $\mathcal{G}$ is said to have the \emph{finiteness property} if every loxodromic has an axial subgraph.
\item For an integer $\kappa \geq 1$, the action of $G$ on $\mathcal{G}$ is said to have the \emph{$\kappa$-finiteness property} if every loxodromic of $G$ has an axial subgraph of width at most $\kappa$.
\end{enumerate}
\end{defn*}

Bowditch proved the following, motivated by Delzant \cite{MR1390660}.

\begin{lem}[Lemma 3.4 \cite{Bowditch08}] \label{lem:bowditch_lemma}
If a loxodromic $g$ has an axial subgraph $\mathcal{A}_g$ of width $\kappa$,  then  $g^m$ preserves a geodesic for some $0 < m \leq \kappa^2$. More precisely, $g$ permutes $m$ geodesics lying on $\mathcal{A}_g$.
\end{lem}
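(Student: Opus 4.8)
The plan is to identify the two ends of $\mathcal{A}_g$ with the repelling and attracting fixed points $\xi_-,\xi_+\in\partial\mathcal{G}$ of $g$, to study the collection of bi-infinite geodesics of $\mathcal{G}$ joining $\xi_-$ to $\xi_+$ that lie inside $\mathcal{A}_g$, and to exhibit a non-empty finite sub-collection of at most $\kappa^2$ such geodesics that $g$ permutes cyclically; applying $g^m$, where $m$ is the size of the relevant orbit, then fixes a geodesic. Throughout I would use that by the \v{S}varc--Milnor lemma $\mathcal{A}_g$ is quasi-isometric to $\mathbb{R}$ (hence has exactly two ends, which form the pair $\xi_\pm$ because $g$ is loxodromic), that $\mathcal{A}_g$ is weakly convex (so geodesics of $\mathcal{G}$ between vertices of $\mathcal{A}_g$ may be taken inside $\mathcal{A}_g$), and that $\mathcal{A}_g$ is locally finite.

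First I would fix an end-separating set $S\subseteq V(\mathcal{A}_g)$ of cardinality $\kappa$. Since every translate $g^nS$ is again end-separating, any bi-infinite geodesic contained in $\mathcal{A}_g$ and joining its two ends must cross each $g^nS$. To see that such geodesics exist at all, fix $p\in S$ and, for each $n\ge 1$, choose by weak convexity a geodesic $\sigma_n$ of $\mathcal{A}_g$ from $g^{-n}p$ to $g^np$; as $g$ is loxodromic, $d_{\mathcal{G}}(p,g^{\pm n}p)\to\infty$, while every $\sigma_n$ passes through the finite set $S$, so after passing to a subsequence all $\sigma_n$ contain a common vertex $q\in S$. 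Reparametrising from $q$ and applying the Arzel\`a--Ascoli/diagonal argument (legitimate since $\mathcal{A}_g$ is locally finite) produces a bi-infinite geodesic $L_0$ of $\mathcal{G}$ contained in $\mathcal{A}_g$ whose two rays converge to $\xi_-$ and $\xi_+$.

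Now comes the counting. For a bi-infinite geodesic $L$ in $\mathcal{A}_g$ from $\xi_-$ to $\xi_+$ and each $j\in\mathbb{Z}$, let $g^jw_j(L)$, with $w_j(L)\in S$, be a distinguished vertex at which $L$ crosses $g^jS$ — say the one closest to the $\xi_-$ side. This crossing datum is $g$-equivariant: $g^nL$ has crossing sequence $(w_{j-n}(L))_j$, so $g^mL=L$ precisely when $(w_j(L))_j$ is $m$-periodic. After fixing once and for all a $g$-equivariant choice of geodesic segment of $\mathcal{A}_g$ between each pair of vertices, and restricting attention to geodesics that are concatenations of such chosen segments, a geodesic in this restricted family is pinned down by its sequence $(w_j)$; moreover $\delta$-hyperbolicity (all these geodesics share the endpoint pair $\xi_\pm$, hence pairwise fellow-travel within a Hausdorff distance depending only on $\delta$) lets one splice two such geodesics that agree on two consecutive walls $g^jS$ and $g^{j+1}S$. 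Writing $w_j:=w_j(L_0)$, the pairs $(w_j,w_{j+1})\in S\times S$ assume at most $\kappa^2$ values, so there are $0\le j<j'\le\kappa^2$ with $(w_j,w_{j+1})=(w_{j'},w_{j'+1})$; splicing $L_0$ with $g^{\,j'-j}L_0$ along these coinciding walls yields a geodesic $L'$ in $\mathcal{A}_g$ from $\xi_-$ to $\xi_+$ with $g^{\,m}L'=L'$ for $m:=j'-j\le\kappa^2$. Then $\{L',gL',\dots,g^{m-1}L'\}$ is a $g$-invariant set of at most $m\le\kappa^2$ geodesics lying on $\mathcal{A}_g$ permuted cyclically by $g$, and $g^m$ fixes $L'$.

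The step I expect to be the main obstacle is the coding-and-splicing in the last paragraph. In contrast with Bowditch's original setting, where \emph{tight} geodesics come with a built-in uniqueness, here geodesics of $\mathcal{G}$ between two vertices of $\mathcal{A}_g$ are far from unique, so genuine care is required to carve out a $g$-invariant family of geodesics on which the crossing-pair invariant is actually injective and along which a concatenation of matched pieces is an \emph{honest} geodesic rather than merely a quasigeodesic; this is exactly the point where weak convexity of $\mathcal{A}_g$, its bounded width, and $\delta$-thinness of triangles must be combined. A secondary technical point is to make the limiting argument of the second paragraph rigorous — checking that the extracted limit is genuinely bi-infinite, stays in the closed subgraph $\mathcal{A}_g$, and realises the ends $\xi_\pm$ rather than drifting.
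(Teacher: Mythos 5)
Your proposal is measured against Bowditch rather than against anything in the text: the paper gives no proof of this lemma, importing it directly from \cite[Lemma~3.4]{Bowditch08}, where it is established for tight geodesics in the curve complex. Your opening steps are sound and follow the same skeleton: identifying the two ends of $\mathcal{A}_g$ with the fixed points of $g$, extracting a bi-infinite geodesic $L_0\subset\mathcal{A}_g$ joining them via weak convexity, local finiteness and an Arzel\`a--Ascoli argument, and running the pigeonhole on the at most $\kappa^2$ values of the pair $(w_j,w_{j+1})$, which is the right count and explains the exponent $2$ in the bound.

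The genuine gap is exactly the step you flag as the ``main obstacle'' and then assert anyway: that splicing $L_0$ with $g^{\,j'-j}L_0$ along two matched consecutive crossings yields an honest geodesic. Sharing one, or even two consecutive, crossing vertices does not make the concatenation geodesic: for $a$ on $L_0$ before the splice point and $b$ on $g^mL_0$ after it, the end-separating sets only force a path from $a$ to $b$ to meet each translate $g^iS$ \emph{somewhere}, not at the designated vertices, so nothing excludes a shortcut of defect up to roughly $2\delta$; the spliced $g^m$-periodic path is in general only a quasigeodesic with additive error comparable to $\delta$. Your proposed remedy --- fixing a $g$-equivariant choice of segments and restricting to concatenations of chosen segments so that the crossing sequence pins down the geodesic --- does not repair this: such concatenations need not be geodesics, the restricted family is not shown to be non-empty, to contain $L_0$, or to be closed under the splice. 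This is precisely where Bowditch uses tightness (the tight geodesics between the two fixed points form a locally finite, $g$-invariant family within which the splice can be controlled), and no substitute for that ingredient is supplied in the abstract axial-subgraph setting. Note also that you cannot borrow the paper's Lemma~\ref{lem:L_is_geodesic}: its substitution argument needs $L_0$ and $g^mL_0$ to share the entire unbounded $\langle g^{|\mathcal{L}|}\rangle$-orbit of a point, which is available there only because invariance of $L_0$ under a power of $g$ --- the conclusion of the very lemma you are proving --- is already known from Lemma~\ref{lem:bowditch_lemma}; invoking it here would be circular.
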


From this lemma, we can find an effective cardinality of a collection of geodesics preserved by $g$.
See the following.

\begin{lem} \label{lem:refinement_Bowditch}
If a loxodromic $g$ has an axial subgraph $\mathcal{A}_g$ of width $\kappa$, then $g$ cyclically permutes at most $\kappa$ pairwise disjoint geodesics lying on $\mathcal{A}_g$.
\end{lem}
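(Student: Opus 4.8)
The plan is to refine Lemma~\ref{lem:bowditch_lemma} by replacing the crude bound $m \le \kappa^2$ with the sharper $m \le \kappa$, at the cost of insisting the $m$ geodesics be \emph{pairwise disjoint}. The starting data: an axial subgraph $\mathcal{A}_g$ of width $\kappa$, so there is an end-separating set $S \subset V(\mathcal{A}_g)$ with $\abs{S} = \kappa$. Since $\mathcal{A}_g$ is weakly convex and quasi-isometric to a line, between any vertex in one end and any vertex in the other end, every path — in particular every geodesic of $\mathcal{A}_g$ running from one end to the other — must pass through $S$. The idea is to extract from $\mathcal{A}_g$ a $g$-invariant family of bi-infinite geodesics and count how many of them an end-separating set of size $\kappa$ can meet.

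The key steps, in order, are: (1) Following Bowditch's argument for Lemma~\ref{lem:bowditch_lemma}, produce bi-infinite geodesics lying in $\mathcal{A}_g$ that are permuted by $g$; the point is that each such geodesic, being a line joining the two ends of $\mathcal{A}_g$, must intersect the end-separating set $S$ in at least one vertex. (2) Choose the geodesics to be pairwise disjoint — this is where disjointness enters: if two candidate geodesics share a vertex, one can use weak convexity to reroute/merge or discard one of them so that the resulting family stays $g$-invariant and becomes pairwise disjoint (a geodesic in a $\delta$-hyperbolic graph that fellow-travels another can be slid off it by bounded amounts, and being in a quasi-line of width $\kappa$ caps how much overlap is possible). (3) Once the geodesics $\gamma_1, \dots, \gamma_m$ are pairwise disjoint and each meets $S$, assign to each $\gamma_i$ a vertex $x_i \in \gamma_i \cap S$; disjointness of the $\gamma_i$ forces the $x_i$ to be distinct, hence $m \le \abs{S} = \kappa$. (4) Since $g$ permutes $\{\gamma_1,\dots,\gamma_m\}$ and this permutation is a single cycle on the orbit of any one of them (a loxodromic acts with a single periodic orbit direction along $\mathcal{A}_g$), $g$ cyclically permutes these $\le \kappa$ geodesics.

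The main obstacle I expect is Step~(2): arranging the $g$-invariant geodesic family to be \emph{pairwise disjoint} rather than merely a finite collection permuted by $g$. Bowditch's Lemma~\ref{lem:bowditch_lemma} only gives geodesics "lying on $\mathcal{A}_g$" with no disjointness, and two geodesics in a hyperbolic graph can intersect in complicated ways. The resolution should be a pruning argument: start with the full (possibly overlapping) $g$-orbit of geodesics, and whenever two of them intersect, note that by weak convexity they fellow-travel within a bounded region; collapse each such cluster to a single representative, checking that $g$ still permutes the set of clusters and that the resulting representatives can be chosen disjoint. One must also confirm the count is not lost when passing to representatives — but since each representative still joins the two ends of $\mathcal{A}_g$, it still meets $S$, and distinct disjoint representatives still give distinct vertices of $S$. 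The remaining steps are bookkeeping: the distinctness-of-intersection-points argument in Step~(3) is immediate from disjointness, and the cyclic structure in Step~(4) follows because $g$ translates along the single axis direction of $\mathcal{A}_g$, so it cannot fix any $\gamma_i$ and must rotate them in one cycle.
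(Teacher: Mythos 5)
Your overall strategy---invoke Lemma~\ref{lem:bowditch_lemma} to get a finite $g$-permuted family of geodesics in $\mathcal{A}_g$, pass to a single cycle, arrange pairwise disjointness, and then count against an end-separating set of size $\kappa$ by pigeonhole---is the same as the paper's, and your Steps (1), (3) and the final count are fine. The genuine gap is exactly the step you flag as the main obstacle, Step (2). The claim that intersecting members of the family can be ``rerouted/merged or discarded'' so that the family ``stays $g$-invariant and becomes pairwise disjoint'' is not an argument: discarding one geodesic destroys invariance unless you discard its whole $g$-orbit; two crossing bi-infinite geodesics cannot in general be slid off each other inside the quasi-line (they may overlap along arbitrarily long segments, so ``width $\kappa$ caps the overlap'' is false); and ``collapsing a cluster to a single representative'' presupposes that the cluster admits a genuine geodesic representative invariant under the appropriate power of $g$---which is precisely what has to be constructed and proved, and is the technical heart of the lemma.

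The paper resolves this by a minimality argument rather than by disjointifying the given geodesics. Take a cyclically permuted collection $\mathcal{L}$ of smallest cardinality. If some $L_0\in\mathcal{L}$ meets $g^mL_0$ (with $1<m<\lvert\mathcal{L}\rvert$) at a vertex $x$, let $\gamma\subset L_0$ be the segment from $g^{-m}x$ to $x$ and splice the translates $g^{\ell m}\gamma$ into a bi-infinite path $L$; the nontrivial point, isolated in Lemma~\ref{lem:L_is_geodesic}, is that $L$ is again a geodesic (proved by repeated substitution of geodesic subsegments of $L_0$ and $g^mL_0$, using that both are preserved by $g^{\lvert\mathcal{L}\rvert}$). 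Then $\{L,gL,\dots,g^{m-1}L\}$ is a strictly smaller cyclically permuted family, contradicting minimality; hence the minimal family is pairwise disjoint and the pigeonhole bound $\leq\kappa$ applies. Your proposal is missing this construction and, in particular, any proof that the proposed ``representative'' is a geodesic; without it the disjointness needed in Step (3) is not established. (A minor additional point: in Step (4) you do not need the action on the family to be a single cycle for dynamical reasons---one simply restricts to one cycle of the permutation, as the paper does at the outset, and $g$ fixing a geodesic is the allowed case of a cycle of length one.)
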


We use a left action in the proof of the above lemma because the readers may feel familiar.
However, this lemma can be applied to a right action. 

\begin{proof}
Since every permutation can be decomposed into disjoint cycles, for every finite collection of geodesics obtained from Lemma \ref{lem:bowditch_lemma}, there is a subcollection whose geodesics are cyclically permuted by $g$.
Let $\mathcal{L}$ be a finite collection of geodesics in $\mathcal{A}_g$ cyclically permuted by $g$.
If $\mathcal{L}$ has more than $\kappa$ geodesics, then a pair of geodesics of $\mathcal{L}$ share a vertex of an end-separating set of cardinality $\kappa$.
That is, if it is true that $g$ permutes pairwise disjoint geodesics in $\mathcal{A}_g$, then we have $\lvert \mathcal{L} \rvert \leq \kappa$.

So it is enough to show if $\mathcal{L}$ has intersecting geodesics, there exists a smaller collection of geodesics preserved by $g$.
Assume $L_0 \in \mathcal{L}$ intersects another geodesic of $\mathcal{L}$.
Because $g$ cyclically permutes geodesics of $\mathcal{L}$, there exists $1 < m < \lvert \mathcal{L} \rvert$ such that $L_0$ and $g^mL_0$ have an intersection.

If $x$ is an intersection vertex of $L_0$ and $g^mL_0$, then $L_0$ contains $x$ and $g^{-m}x$.
Let $\gamma$ be the segment of $L_0$ joining $g^{-m}x$ and $x$.
Then $g^{\ell m}\gamma$ lies on $g^{\ell m}L_0$ for each $\ell \in \mathbb{Z}$.
If $L $ is the concatenation of segments $g^{\ell m}\gamma$, then $L$ is preserved by $g^m$.

In fact, $L$ is a geodesic by Lemma \ref{lem:L_is_geodesic}.
So $g$ preserves the collection $\{ L, gL, \dots, g^{m - 1}L \}$ which is smaller than $\mathcal{L}$.
Therefore, a smallest collection preserved by $g$ consists of pairwise disjoint geodesics, and its cardinality is at most $\kappa$.
\end{proof}

In the proof of Lemma \ref{lem:refinement_Bowditch}, we postpone the proof that $L$ is a geodesic.
If $I$ is a segment of $L$ and $I'$ is another geodesic segment such that $I$ and $I'$ share endpoints, we may obtain a line $L'$ from $L$ by substituting $I$ to $I'$.
Then $L'$ is also a geodesic because every segment of $L'$ has the length equal to the distance of endpoints.
Using this method, we can show the following.

\begin{lem} \label{lem:L_is_geodesic}
In the proof of Lemma \ref{lem:refinement_Bowditch}, $L$ is a geodesic.
\end{lem}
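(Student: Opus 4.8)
The strategy is to exploit the fact that $L$ was built by concatenating the translates $g^{\ell m}\gamma$ of a single finite segment $\gamma$ of the geodesic $L_0$, where $\gamma$ runs from $g^{-m}x$ to $x$. Thus $L$ is automatically a $g^m$-invariant line, and since $L_0$ is a geodesic and $g^m$ acts on $\mathcal{A}_g$ by an isometry translating along $L_0$ (its restriction is a loxodromic of translation length $m\tau(g)$ on the quasi-line $\mathcal{A}_g$), each piece $g^{\ell m}\gamma$ has length exactly $d_{\mathcal{G}}(g^{\ell m}g^{-m}x,\,g^{\ell m}x)=d_{\mathcal{G}}(g^{-m}x,x)=\ell(\gamma)$, and consecutive pieces meet exactly at the points $g^{\ell m}x$. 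So $L$ is a bi-infinite edge path passing through the discrete $g^m$-orbit $\{g^{\ell m}x\}_{\ell\in\Z}$ in order. The task is to show this path is globally geodesic, i.e. that for any two vertices $p,q$ on $L$ the sub-path of $L$ between them realizes $d_{\mathcal{G}}(p,q)$.

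First I would reduce to checking geodesicity on the orbit points: because the finite segment $\gamma$ already lies on the genuine geodesic $L_0$, the portion of $L$ between $g^{(\ell-1)m}x$ and $g^{\ell m}x$ is geodesic, and more generally any sub-path of $L$ spanning at most one fundamental domain's worth is geodesic. So it suffices to prove that the path along $L$ from $g^{am}x$ to $g^{bm}x$ has length $d_{\mathcal{G}}(g^{am}x,g^{bm}x)$ for all integers $a<b$; by $g^m$-equivariance we may take $a=0$. Write $\ell(L|_{[x,g^{bm}x]}) = b\cdot\ell(\gamma)$. One inequality is trivial. For the other, I would use $\delta$-hyperbolicity together with the fact that the $g^m$-orbit $\{g^{\ell m}x\}$ lies within bounded Hausdorff distance of the axis of $g^m$ in $\mathcal{G}$ (a loxodromic isometry of a $\delta$-hyperbolic space has a quasi-geodesic axis, and its orbit is a quasi-geodesic with constants depending only on $\delta$ and $\tau$). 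Combined with stability of quasi-geodesics, this forces the path-distance between orbit points to be comparable to, hence — since $L$ is made of honest geodesic segments matching $L_0$ locally — equal to the $\mathcal{G}$-distance.

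Alternatively, and more in the spirit of the paragraph preceding the lemma, I would argue by a "segment-swapping" induction: $L$ and $L_0$ agree on the fundamental domain $[g^{-m}x,x]$; the line $L$ is obtained from the geodesic line $L_0$ by replacing, for each $\ell\neq 0$, the segment of $L_0$ between $g^{\ell m}g^{-m}x$ and $g^{\ell m}x$ with $g^{\ell m}\gamma$, which has the \emph{same} endpoints and the \emph{same} length $\ell(\gamma)$ (as both equal $d_{\mathcal{G}}$ of those endpoints, $\gamma$ being a sub-segment of the geodesic $L_0$ and $g^{\ell m}$ an isometry). By the displayed observation that replacing a geodesic sub-segment of a geodesic by another geodesic segment with the same endpoints yields again a geodesic, each finite such replacement preserves geodesicity, and a direct limit / exhaustion argument over larger and larger bands $\bigcup_{|\ell|\le k} g^{\ell m}\gamma$ upgrades this to the full bi-infinite line $L$: any two vertices of $L$ lie in some band, which is a geodesic segment, so $L$ is a geodesic.

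The main obstacle is the passage from finite swaps to the infinite line: one must be sure that "agreeing with a geodesic on every finite band" really implies "geodesic", which is where one invokes that in a graph a bi-infinite path is geodesic iff every finite sub-path is, and that every pair of vertices on $L$ sits inside one of the finite bands $\bigcup_{|\ell|\le k}g^{\ell m}\gamma$ for $k$ large. The bookkeeping that the replaced segments share endpoints with the originals — i.e. that $g^{\ell m}$ of the endpoint $x$ of $\gamma$ is exactly the left endpoint $g^{(\ell+1)m}g^{-m}x$ of $g^{(\ell+1)m}\gamma$ — is the routine part and follows from $\gamma$ joining $g^{-m}x$ to $x$.
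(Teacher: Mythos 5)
There is a genuine gap, and it is the same one in both of your routes: you never use the one fact that makes any swapping argument possible, namely that $L_0\cap g^mL_0$ contains an \emph{unbounded} set of common vertices, the $\langle g^{\lvert\mathcal{L}\rvert}\rangle$-orbit of $x$ (this holds because $g^{\lvert\mathcal{L}\rvert}$ preserves $L_0$, commutes with $g^m$, and $x\in L_0\cap g^mL_0$). In your second (swapping) route you assert that $L$ is obtained from $L_0$ by replacing, for each $\ell\neq 0$, ``the segment of $L_0$ between $g^{(\ell-1)m}x$ and $g^{\ell m}x$'' by $g^{\ell m}\gamma$, ``which has the same endpoints.'' But the junction points $g^{\ell m}x$ do not lie on $L_0$ in general: one only knows $g^{\ell m}x\in g^{\ell m}L_0\cap g^{(\ell+1)m}L_0$, and $L_0$ is invariant under $g^{\lvert\mathcal{L}\rvert}$, not under $g^m$ (indeed $g^mL_0\neq L_0$). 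So the substitution you describe is not even defined, and the ``replace a geodesic subsegment by another geodesic segment with the same endpoints'' principle cannot be applied to $L_0$ versus $L$ directly. (For the same reason your intermediate claim that every sub-path of $L$ spanning one fundamental domain is geodesic is unjustified for sub-paths straddling a junction $g^{\ell m}x$; that is part of what must be proved.) The paper's argument supplies precisely the missing mechanism: since the whole $\langle g^{\lvert\mathcal{L}\rvert}\rangle$-orbit of $x$ lies on both $L_0$ and $g^mL_0$, one may substitute in $L_0$ the segment from $x$ to a far orbit point $g^{N_1\lvert\mathcal{L}\rvert}x$ by the segment of $g^mL_0$ with the \emph{same} endpoints, obtaining a geodesic $L_1\supset\gamma\cup g^m\gamma$; iterating this between consecutive translates $g^{(\ell-1)m}L_0$ and $g^{\ell m}L_0$ at suitable far orbit points, $\lvert\mathcal{L}\rvert$ times, and then using $g^{\lvert\mathcal{L}\rvert m}$-periodicity, assembles all of $L$ as a geodesic. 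Without locating these common far-away points, your induction over bands has nothing legitimate to swap against.

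Your first route does not close the gap either: stability of quasi-geodesics in a $\delta$-hyperbolic graph only gives that the length along $L$ between orbit points is \emph{comparable} to the distance, with constants depending on $\delta$ and the translation length, whereas the lemma asserts exact equality; the step ``comparable, hence equal'' is unjustified (a locally geodesic bi-infinite path at bounded Hausdorff distance from a geodesic need not be geodesic). Exactness is the entire content here, since it is what later yields $\tau(g^m)\in\mathbb{Z}$ and hence the bound on denominators.
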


\begin{proof}
For a geodesic $L'$ and $x, y \in L'$, let $I(x, y, L')$ denote the segment of $L'$ joining $x$ and $y$.
Because $g^{\lvert \mathcal{L} \rvert}$ preserves $L_0$, the $\langle g^{\lvert\mathcal{L}\rvert} \rangle$-orbit of $x$ is contained in $L_0 \cap (g^m L_0)$ as a subset.
Let $L_1$ be the geodesic obtained from $L_0$ by substituting $I(x, g^{N_1\lvert\mathcal{L}\rvert}x, L_0)$ to $I(x, g^{N_1\lvert\mathcal{L}\rvert}x, g^mL_0)$ for some sufficiently large $N_1$.
Then $L_1$ contains $\gamma \cup (g^m\gamma)$ as a segment since $\gamma = I(g^{-m}x, x, L_0) \subset L_1$ and $g^m\gamma = I(x, g^mx, g^mL_0) \subset L_1.$

Because $N_1$ is sufficiently large, $L_1$ follows $g^m L_0$ for a long time so that $L_1$ contains $g^{m + N_2\lvert\mathcal{L}\rvert}x$ for some large $N_2 < N_1$.
Let $L_2$ be the geodesic obtained by substituting $I(g^mx, g^{m + N_2\lvert\mathcal{L}\rvert}x, g^mL_0) = g^m I(x, x^{N_2\lvert\mathcal{L}\rvert}x, L_0)$ to $g^mI(x, g^{N_2\lvert\mathcal{L}\rvert}x, g^{m}L_0)$.
Then $L_2$ contains $\gamma \cup (g^m\gamma) \cup (g^{2m}\gamma)$.

Inductively, let us construct a geodesic $L_i$ from $L_{i - 1}$ by substituting the segment $g^{(i - 1)m}I(x, g^{N_i\lvert\mathcal{L}\rvert}x, L_0)$ to $g^{(i - 1)m}I(x, g^{N_i\lvert\mathcal{L}\rvert}x, g^{m}L_0)$ for some sufficiently large $N_i < N_{i - 1}$.
Then $L_{\lvert \mathcal{L} \rvert}$ contains $\gamma \cup (g^m\gamma) \cup \dots \cup (g^{\lvert\mathcal{L}\rvert m} \gamma)$.
On the other hand, since $g^{\lvert\mathcal{L}\rvert}L_0 = L_0$, the geodesic $L_{\lvert \mathcal{L} \rvert}$ follows $L_0$ except for $g^m\gamma \cup \dots \cup g^{(\lvert\mathcal{L}\rvert - 1)m}\gamma$.

By the above, $J:=(g^m\gamma) \cup \dots \cup (g^{\lvert\mathcal{L}\rvert m} \gamma)$ is a geodesic segment joining $x$ and $g^{\lvert\mathcal{L}\rvert m} x$.
At last, let us construct the geodesic from $L_{\lvert \mathcal{L} \rvert}$ by substituting $ g^{\ell \lvert \mathcal{L} \rvert m} I(x, g^{\lvert \mathcal{L} \rvert m} x, L_0)$ to $g^{\ell \lvert \mathcal{L} \rvert m} J$ for all $\ell \in \mathbb{Z}$.
Then the result is exactly $L$; therefore, this construction implies $L$ is a geodesic.
\end{proof}

From Lemma \ref{lem:refinement_Bowditch}, we deduce the next theorem.

\begin{thm} \label{thm:rational_length}
Let $G$ be a group acting simplicially on a $\delta$-hyperbolic graph $\mathcal{G}$.
\begin{enumerate}
\item \label{enum1:rational_length} If the action of $G$ has the finiteness property, then $\operatorname{Spec}(G, \mathcal{G})$ consists of rational numbers.
\item \label{enum2:rational_length} If the action of $G$ has the $\kappa$-finiteness property for some positive integer $\kappa$, then $\operatorname{Spec}(G, \mathcal{G})$ consists of fractions of denominator at most $\kappa$.
\end{enumerate}
\end{thm}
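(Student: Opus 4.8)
The plan is to deduce both parts from Lemma~\ref{lem:refinement_Bowditch} together with the homogeneity of $\tau$. Fix a loxodromic $g\in G$. In case~\eqref{enum1:rational_length} the action has the finiteness property, so $g$ has an axial subgraph $\mathcal{A}_g$; let $\kappa$ be its width (a finite number by the \v{S}varc--Milnor argument recalled before the definition of the finiteness property). In case~\eqref{enum2:rational_length} we have the same situation with the extra information that $\kappa$ may be taken to be the uniform constant from the $\kappa$-finiteness property. So in either case it suffices to show: if $g$ has an axial subgraph of width $\kappa$, then $\tau(g)$ is a fraction with denominator at most $\kappa$.

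Next I would apply Lemma~\ref{lem:refinement_Bowditch}: $g$ cyclically permutes a collection $\mathcal{L}=\{L_0,\dots,L_{m-1}\}$ of $m\le\kappa$ pairwise disjoint geodesics lying on $\mathcal{A}_g$, with $g L_i=L_{i+1}$ (indices mod $m$). Then $g^m$ preserves the geodesic $L_0$. Since $L_0$ is a genuine geodesic on which $g^m$ acts by isometries with unbounded orbits, $g^m$ acts on $L_0\cong\R$ as a translation, so for any vertex $x\in L_0$ the quantity $d_{\mathcal{G}}(x,g^{mk}x)=k\cdot d_{\mathcal{G}}(x,g^m x)$ for all $k\ge 0$ — here I use that consecutive translates $g^{mj}x$ lie in order along the geodesic $L_0$, so the triangle inequalities are equalities. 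Hence $\tau(g^m)=d_{\mathcal{G}}(x,g^m x)\in\Z_{\ge 0}$, in fact a positive integer since $g$ (hence $g^m$) is loxodromic. By homogeneity, $\tau(g)=\tfrac{1}{m}\tau(g^m)$, which is a rational number with denominator dividing $m$, hence with denominator at most $m\le\kappa$ in case~\eqref{enum2:rational_length} and simply rational in case~\eqref{enum1:rational_length}. Finally, elliptic elements contribute $\tau=0=0/1$ to $\operatorname{Spec}(G,\mathcal{G})$, so the conclusion about the whole spectrum follows.

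The only genuinely delicate point is the claim that $g^m$ translates along the geodesic $L_0$, i.e.\ that $d_{\mathcal{G}}(x,g^{mk}x)=k\,d_{\mathcal{G}}(x,g^m x)$; the subtlety is ruling out backtracking of the orbit $(g^{mk}x)_k$ on $L_0$. This is where one uses that $L_0$ is an honest geodesic (not merely a quasigeodesic) and that $g^m$ preserves it: an isometry of a geodesic line that is not elliptic must be an orientation-preserving translation, so the orbit of a point is monotone along $L_0$ and the distances add up exactly. I would phrase this as a short lemma or inline remark, possibly invoking that $g^m$, being loxodromic, cannot fix a point of $L_0$ or reverse its orientation (a reversal would give a fixed point, contradicting loxodromicity). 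Everything else is bookkeeping with $\tau(g^m)=m\,\tau(g)$.
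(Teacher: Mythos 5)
Your proof is correct and follows essentially the same route as the paper: both invoke Lemma~\ref{lem:refinement_Bowditch} to get some power $g^m$ with $m\leq\kappa$ preserving a geodesic, conclude $\tau(g^m)\in\mathbb{Z}$, and finish by homogeneity $\tau(g)=\tau(g^m)/m$. Your expanded justification that $g^m$ acts as a translation along the invariant geodesic (so the triangle inequalities are equalities and $\tau(g^m)$ is the integer displacement of a vertex) is simply a fleshed-out version of the paper's terse remark that $\tau(g^m)$ is an integer because the action is simplicial.
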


\begin{proof}
For every loxodromic $g$, the $m$-th power of $g$ preserves a geodesic for some $0< m \leq \kappa$ by Lemma \ref{lem:refinement_Bowditch}.
Because $g$ acts simplicially, $\tau(g^m)$ is an integer so that $\tau(g) = \tau(g^m) / m$ is a rational number of denominator $m$.
Therefore, the statements \eqref{enum1:rational_length} and \eqref{enum2:rational_length} hold.
\end{proof}

%
%

\section{Rational length spectrum: general case} \label{sec:general} 

In this section, we show that the right-angled Artin group actions on the extension graphs satisfy the finiteness property in Theorem \ref{thm:generalcasefiniteness}. Here we  deal with the general case where the finiteness constant depends on the element, which is the first half of Theorem \ref{thm:main1}.

Let $\Gamma$ denote a \emph{finite connected} simplicial graph, and let $A(\Gamma)$ be the right-angled Artin group of $\Gamma$.
For each vertex $v$ on $\Gamma$, we write $\operatorname{st}_\Gamma(v)$ as the star of $v$ on $\Gamma$, that is, the induced graph of the closed $1$-neighborhood of $v$ on $\Gamma$.
The extension graph of $\Gamma$ is written by $\Gamma^e$ with the edge metric $d_{\Gamma^e}$.
For a vertex $x \in \Gamma^e$, the star of $x$ is written as $\operatorname{st}_{\Gamma^e}(x)$.

A power of a vertex (for instance, $v^n \in A(\Gamma)$) is called a syllable.
For an element $g \in A(\Gamma)$, the syllable length of $g$, denoted by $\| g \|_{\rm syl}$, is the smallest number of syllables, the product of which is $g$.
A syllable decomposition of an element $g \in A(\Gamma)$ is a word decomposition $s_n \dots s_1$ of $g$ with syllables $s_i$ and $n = \| g \|_{\rm syl}$.

We regard $\Gamma$ as a subgraph of its extension graph $\Gamma^e$ by the inclusion $v \mapsto v$ for vertices $v \in \Gamma$.
In this sense, for an element $g \in A(\Gamma)$, the subgraph $\Gamma^g$ is the conjugation of $\Gamma$ by $g$.
For each vertex $x \in \Gamma^e$, we write $\operatorname{st}_{\Gamma^e}(x)$ as the induced graph of the closed $1$-neighborhood of $x$.
For a vertex $v \in \Gamma$, the notation $\operatorname{st}_\Gamma(v)$ denotes the star of $v$ on $\Gamma$, which is equal to $\operatorname{st}_{\Gamma^e}(v) \cap \Gamma$.

We first start with a basic lemma about geodesics between two vertices of the extension graphs.

\begin{lem} \label{lem:suc}
For a connected simplicial graph $\Gamma$, the following holds.
\begin{enumerate}
\item \label{en2:geo} For all $x, y \in \Gamma \subset \Gamma^e$ and $g, h \in A(\Gamma)$, we have $d_{\Gamma}(x, y) \leq d_{\Gamma^e}(x^g, y^h)$.
\item \label{en1:geo} A geodesic lying on $\Gamma$ is a geodesic of $\Gamma^e$.
\end{enumerate}
\end{lem}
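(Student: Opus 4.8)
The plan is to prove part \eqref{en2:geo} first and then deduce part \eqref{en1:geo} as an immediate corollary. The key observation for \eqref{en2:geo} is that the ambient extension graph $\Gamma^e$ is obtained from $\Gamma$ by an increasing union of doublings, $\Gamma = \Gamma_0 \subset \Gamma_1 \subset \cdots$ with $\Gamma^e = \bigcup_i \Gamma_i$, and each doubling $\Gamma_{i+1} = \Gamma_i \cup \Gamma_i^{(u)^\ell}$ admits a simplicial retraction $p_{i} \colon \Gamma_{i+1} \to \Gamma_i$ that fixes $\Gamma_i$ pointwise and sends $\Gamma_i^{(u)^\ell}$ back to $\Gamma_i$ via $x \mapsto x^{(u)^{-\ell}}$ (this is the same projection map already used in the proof of Lemma \ref{lem:sylsep}). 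A composition of simplicial maps is simplicial and hence does not increase distances, so composing finitely many of these retractions produces, for any $i$, a simplicial retraction $\pi_i \colon \Gamma_i \to \Gamma$ that fixes $\Gamma$ pointwise.

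First I would fix $x, y \in \Gamma$ and $g, h \in A(\Gamma)$. Since $x^g$ and $y^h$ are vertices of $\Gamma^e$, they both lie in some $\Gamma_i$ for $i$ large enough, and any geodesic in $\Gamma^e$ between them of length $d_{\Gamma^e}(x^g, y^h)$ is a finite path, so it also lies in some (possibly larger) $\Gamma_N$. Applying the simplicial retraction $\pi_N \colon \Gamma_N \to \Gamma$ to this path yields an edge path in $\Gamma$ of no greater length, joining $\pi_N(x^g) = x$ to $\pi_N(y^h) = y$ (using that $\pi_N$ fixes $\Gamma$ pointwise). Hence $d_\Gamma(x,y) \leq d_{\Gamma^e}(x^g, y^h)$, which is \eqref{en2:geo}.

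For \eqref{en1:geo}, let $\gamma$ be a geodesic of $\Gamma$ with endpoints $x, y \in \Gamma$. Its length in $\Gamma$ equals $d_\Gamma(x,y)$. Since $\gamma$ is also an edge path in $\Gamma^e$, we trivially have $d_{\Gamma^e}(x,y) \leq d_\Gamma(x,y)$; and taking $g = h = 1$ in part \eqref{en2:geo} gives the reverse inequality $d_\Gamma(x,y) \leq d_{\Gamma^e}(x,y)$. Therefore the length of $\gamma$ equals $d_{\Gamma^e}(x,y)$, so $\gamma$ is a geodesic of $\Gamma^e$ as well.

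The only point requiring a little care — the main (mild) obstacle — is making sure the retractions $p_i$ are genuinely simplicial, i.e. that they send edges to edges or collapse them, rather than creating non-adjacent images of adjacent vertices; this is exactly the content of the projection already invoked in Lemma \ref{lem:sylsep}, so I would simply cite that construction. One should also be slightly careful that the chosen geodesic path between $x^g$ and $y^h$, being a finite subgraph, is captured inside a single $\Gamma_N$ before retracting; this is immediate from $\Gamma^e = \bigcup_i \Gamma_i$ and finiteness of the path.
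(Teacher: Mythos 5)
Your strategy is sound, and it is a genuinely different route from the paper's: the paper first uses equivariance to replace the pair $(x^g,y^h)$ by $(x,y^{hg^{-1}})$, then translates each edge $e_i$ of a geodesic into $\Gamma$ by a suitable $g_i$ (in effect applying the label-forgetting projection $v^w\mapsto v$ edge by edge) and reads off a path in $\Gamma$ from $x$ to $y$ of the same length; you instead capture a geodesic inside a finite stage $\Gamma_N$ of the doubling tower and push it down to $\Gamma$ by composing the doubling retractions already used in the proof of Lemma \ref{lem:sylsep}. Both arguments deliver the same thing, a distance-nonincreasing projection of (a finite piece of) $\Gamma^e$ onto $\Gamma$; yours pays the extra cost of invoking the doubling structure, which the paper avoids at this point, but in exchange you only need to cite machinery the paper has already set up. Your deduction of part \eqref{en1:geo} from part \eqref{en2:geo} is correct and matches the paper's.

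One justification is wrong as stated, although the claim it supports is true: you conclude $\pi_N(x^g)=x$ and $\pi_N(y^h)=y$ ``using that $\pi_N$ fixes $\Gamma$ pointwise.'' But $x^g$ and $y^h$ are in general not vertices of $\Gamma$, so fixing $\Gamma$ pointwise says nothing about their images. The correct argument is that each doubling retraction sends a vertex $v^w$ to another conjugate $v^{w'}$ of the same generator $v$ (it conjugates by a power of a vertex or does nothing), so $\pi_N(x^g)$ is a conjugate of $x$ that lies in $V(\Gamma)$; since distinct generators of $A(\Gamma)$ are never conjugate (compare images in the abelianization), the only conjugate of $x$ in $V(\Gamma)$ is $x$ itself, whence $\pi_N(x^g)=x$, and likewise for $y^h$. (Alternatively, normalize first by equivariance, $d_{\Gamma^e}(x^g,y^h)=d_{\Gamma^e}(x,y^{hg^{-1}})$, so that one endpoint really does lie in $\Gamma$; the other endpoint still needs the conjugacy observation.) With that one line added your proof of \eqref{en2:geo} is complete; the remaining ingredients you cite, namely that the doubling projections are simplicial and that a finite path lies in some $\Gamma_N$, are legitimately borrowed from the paper's own use of them.
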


\begin{proof}
\noindent \eqref{en2:geo}
Let $\{e_1, \dots, e_n\}$ be the geodesic path of edges from $x$ to $y^{hg^{-1}}$.
For each $i$, there exists $g_i$ such that $e_i^{g_i}$ is contained in $\Gamma$.
And $\{e_1^{g_1}, \dots, e_n^{g_n}\}$ forms a path from $x$ to $y$ on $\Gamma$.
So we have $d_{\Gamma}(x, y) \leq n = d_{\Gamma^e}(x, y^{hg^{-1}}) = d_{\Gamma^e}(x^g, y^h)$.
\vspace{2mm}

\noindent \eqref{en1:geo}
By this way, every path from $x$ to $y$ can be deformed to a path on $\Gamma$ with the same length.
Therefore, there exists a path on $\Gamma$ joining $x$ to $y$, which has length $d_{\Gamma^e}(x, y)$.
\end{proof}

By this lemma, we can construct a geodesic of $\Gamma^e$ which follows a syllable decomposition.

\begin{prop} \label{prop:geod_decom}
For an element $g \in A(\Gamma)$ and vertices $u, v \in \Gamma$, there exists a syllable decomposition $g = s_n \dots s_1$ and geodesic segments $\delta_0, \dots, \delta_n \subset \Gamma$ such that \begin{enumerate} \item $\delta_{i-1}$ and $\delta_{i}^{s_{i}}$ share an endpoint for each $i$ and \item the concatenation of $\delta_0, \delta_1^{s_1}, \dots, \delta_n^{s_n \dots s_1}$ is a geodesic joining $u$ and $v^g$. \end{enumerate}
As a corollary, $\Gamma \cup (\bigcup_{i = 1}^n \Gamma^{s_i \dots s_1})$ contains a geodesic joining $u$ and $v^g$.
\end{prop}

\begin{proof}
Let $\gamma$ be a geodesic path from $u$ to $v^g$.
For each $i$, let $v_i \in \Gamma$ be the vertex supporting $s_i$, and write $z_i$ as the vertex $v_i^{s_i \dots s_1}$.
By Lemma \ref{lem:sylsep}, if $\operatorname{st}_{\Gamma^e}(z_i)$ contains neither $u$ nor $v^g$, then this separates $u$ from $v^g$.
So $\gamma$ passes through $\operatorname{st}_{\Gamma^e}(z_i)$ for each $i = 1, \dots, n$.

For each $i$, let $y_i$ be the vertex which $\gamma$ first intersects in $\operatorname{st}_{\Gamma^e}(z_i)$.
We claim that the inequality $\gamma^{-1}(y_i) \leq \gamma^{-1}(y_j)$ holds if $i < j$ and $[s_i, s_j] \neq 1$.
By Lemma \ref{lem:suc}\eqref{en1:geo}, one has $d_{\Gamma^e}(z_i, z_j) \geq d_{\Gamma}(v_i, v_j) \geq 2$.
Since $z_i$ lies on $\Gamma \cup \bigcup_{l = 1}^j\Gamma^{s_l \dots s_1}$, $\operatorname{st}_{\Gamma^e}(z_i)$ contains $u$ or separates $z_j$ from $u$ by Lemma \ref{lem:sylsep}.
So $\gamma$ cannot pass through $y_j$ before it intersects $\operatorname{st}_{\Gamma^e}(z_i)$.
Therefore, the claim is satisfied.

In the syllable decomposition of $g$, let us transpose syllables $s_i$ and $s_j$ repeatedly whenever a subword $s_j s_i$ of the decomposition satisfies the inequalities $i < j$ and $\gamma^{-1}(y_i) > \gamma^{-1}(y_j)$.
Because of the above claim, such transpositions occur only if syllables commute.
So the composition of these transpositions gives another syllable decomposition $g = s_{\sigma(n)} \dots s_{\sigma(1)}$ satisfying $\gamma^{-1}(y_{\sigma(i)}) \leq \gamma^{-1}(y_{\sigma(j)})$ for all $i < j$.

Passing to the above permutation, we suppose that the syllable decomposition $g = s_n \dots s_1$ has the property that $\gamma^{-1}(y_i) \leq \gamma^{-1}(y_j)$ for all $i < j$.

For each $i \in \{1, \dots, n\}$, let $x_i$ be the \emph{unique} vertex of $\operatorname{st}_\Gamma(v_i)^{s_i \dots s_1} \cap \{ y_i^h \mid h \in A(\Gamma) \}$.
And write $x_0 := u$ and $x_{n + 1} := v^g$.
Then for each $i \in \{0, \dots, n\}$, because both $x_i$ and $x_{i + 1}$ are contained in $\Gamma^{s_i \dots s_1}$, we have $d_{\Gamma^e}(x_i, x_{i + 1}) \leq d_{\Gamma^e}(y_i, y_{i + 1})$ by Lemma \ref{lem:suc}\eqref{en2:geo}.
So we have $d_{\Gamma^e}(u, v^g) = \sum_{i = 0}^n d_{\Gamma^e}(y_i, y_{i + 1}) = \sum_{i = 0}^n d_{\Gamma^e}(x_i, x_{i + 1})$.

By Lemma \ref{lem:suc}\eqref{en1:geo}, for each $i \in \{0, \dots, n\}$, we can take a geodesic $L_i$ joining $x_i$ and $x_{i + 1}$, which lies on $\Gamma^{s_i \dots s_1}$.
Therefore, the concatenation of $L_0, \dots, L_n$ is a geodesic joining $u$ and $v^g$ and is contained in $\Gamma \cup \bigcup_{i = 1}^n \Gamma^{s_i \dots s_1}$.
\end{proof}

\subsection{The weak convexity of $\Lambda_g$}

For an element $g \in A(\Gamma)$, let $\mathcal{S}(g)$ denote the collection of all syllable decompositions of $g$.
We define the subgraph $$\Lambda_g := \Gamma \cup \left( \bigcup_{s_n \dots s_1 \in \mathcal{S}(g)} \bigcup_{i = 1}^n \Gamma^{s_i \dots s_1} \right).$$

Now we show $\Lambda_g$ is weakly convex.
Choose two vertices $x, y \in \Lambda_g$.
If either $x$ or $y$ belongs to $\Gamma$, then there exists a geodesic joining $x$ and $y$ in $\Lambda_g$ by Proposition \ref{prop:geod_decom}.
Suppose neither $x$ nor $y$ is contained in $\Gamma$.
Then there exist two syllable decompositions $g = s_n \dots s_1 = s_{\sigma(n)} \dots s_{\sigma(1)}$ and $i_0, j_0 \in \{1, \dots, n\}$ such that $x \in \Gamma^{s_{i_0} \dots s_1}$ and $y \in \Gamma^{s_{\sigma(j_0)} \dots s_{\sigma(1)}}$.

\begin{prop} \label{prop:thick_convex}
For every $g \in A(\Gamma)$, we have $\Lambda_g$ is weakly convex.
\end{prop}

\begin{proof}
Choose two vertices $x, y \in \Lambda_g$.
To show $\Lambda_g$ is weakly convex, we need only to show that $\Lambda_g$ contains a geodesic joining $x$ and $y$.
If $x$ or $y$ is contained in $\Gamma$, then $\Lambda_g$ contains a geodesic joining $x$ and $y$ due to Proposition \ref{prop:geod_decom}.
Suppose $\{x, y\}$ is disjoint from $\Gamma$.
By definition, there exist two syllable decompositions $g = s_n \dots s_1 = s_{\sigma(n)} \dots s_{\sigma(1)}$ and $i_0, j_0 \in \{1, \dots, n\}$ such that $x \in \Gamma^{s_{i_0} \dots s_1}$ and $y \in \Gamma^{s_{\sigma(j_0)} \dots s_{\sigma(1)}}$.

If $s_{i_0} \dots s_1$ can be reformed to a syllable subword of $s_{\sigma(j_0)} \dots s_{\sigma(1)}$, then $\Lambda_g^{(s_{i_0} \dots s_1)^{-1}}$ contains a geodesic joining $x^{(s_{i_0} \dots s_1)^{-1}}$ and $y^{(s_{i_0} \dots s_1)^{-1}}$ by Proposition \ref{prop:geod_decom}.
So $\Lambda_g$ contains a geodesic joining $x$ and $y$.
Similarly, if $s_{\sigma(j_0)} \dots s_{\sigma(1)}$ can be reformed to a syllable subword of $s_{i_0} \dots s_1$, then $\Lambda_g$ contains a geodesic joining $x$ and $y$.

Assume each of $s_{i_0} \dots s_1$ and $s_{\sigma(j_0)} \dots s_{\sigma(1)}$ cannot be reformed to a syllable subword of the other.
By Lemma \ref{lem:additional_lem}, one can write $s_{i_0} \dots s_1 = w_1 w_0$ and $s_{\sigma(j_0)} \dots s_{\sigma(1)} = w_2 w_0$ for some nontrivial commuting syllable subwords $w_1$ and $w_2$ with $\operatorname{supp}(w_1) \oplus \operatorname{supp}(w_2) \subseteq \Gamma$.
Since $x^{(s_{i_0} \dots s_1)^{-1}} \in \Gamma$ and $y^{(s_{i_0} \dots s_1)^{-1}} \in \Gamma^{w_2 w_1^{-1}}$, we are able to apply Proposition \ref{prop:geod_decom} so that a concatenation of conjugations of $\Gamma$ following rightmost syllable subwords of some syllable decomposition of $w_2 w_1^{-1}$ contains a geodesic $L$ joining $x^{(s_{i_0} \dots s_1)^{-1}}$ and $y^{(s_{i_0} \dots s_1)^{-1}}$.

Because $\operatorname{supp}(w_1)$ and $\operatorname{supp}(w_2)$ form a subjoin of $\Gamma$, if a syllable subword $w$ can be reformed to a rightmost syllable subword of $w_2 w_1^{-1}$, then $w$ is written by $t_2 t_1^{-1}$ for some rightmost syllable subwords $t_1$ and $t_2$ of $w_1$ and $w_2$, respectively.
If $L$ intersects $\Gamma^{t_2t_1^{-1}}$, then $L^{w_1w_0}$ intersects $\Gamma^{t_2 (t_1^{-1}w_1) w_0}$.
A reduced form of $t_2 (t_1^{-1}w_1) w_0$ is a rightmost syllable subword of $g$ since $g = w_3 w_2 w_1 w_0 = w_3 (w_2 t_2^{-1}) t_1 t_2 (t_1^{-1}w_1) w_0$.
So we have $\Gamma^{t_2(t_1^{-1}w_1)w_0} \subseteq \Lambda_g$.
In summary, if $L$ intersects $\Gamma^{t_2 t_1^{-1}}$, then $\Gamma^{t_2(t_1^{-1}w_1)w_0} \subseteq \Lambda_g$.

Note $L^{w_1w_0}$ is a geodesic joining $x$ and $y$.
The union of $\Gamma^{t_2(t_1^{-1}w_1)w_0}$ through all rightmost syllable subwords $t_1$ and $t_2$ of $w_1$ and $w_2$, respectively, contains $L^{w_1w_0}$.
Because this union lies on $\Lambda_g$, the geodesic $L^{w_1w_0}$ belongs to $\Lambda_g$, that is, there exists a geodesic joining $x$ and $y$ in $\Lambda_g$.
Therefore, $\Lambda_g$ is weakly convex.
\end{proof}

\subsection{Axial subgraph}
Suppose $g$ is cyclically syllable-reduced loxodromic.
Using the above fact as a building block, we can construct a weakly convex subgraph invariant from a cyclically syllable-reduced loxodromic.
Let $\mathcal{T}_g$ denote the following subgraph: $$\mathcal{T}_g := \bigcup_{m \geq 1} \Lambda_{g^{2m}}^{g^{-m}}.$$
Our goal is to show $\mathcal{T}_g$ is an axial subgraph of $g$ by proving sequel Lemmas \ref{lem:base_T} -- \ref{lem:T_cocompact}.
The next lemma is followed by the definition of $\Lambda_g$.

\begin{lem} \label{lem:base_T}
For all $m \geq 0$ and $n \geq 1$, the next two inclusions hold: \begin{enumerate} \item $\Lambda_{g^n} \subseteq \Lambda_{g^{m+n}}$; and \item  $\Lambda_{g^n}^{g^{m}} \subseteq \Lambda_{g^{m + n}}$. \end{enumerate}
\end{lem}

Lemma \ref{lem:base_T} induces the sequence $\{ \Lambda_{g^{2m}}^{g^{-m}} \}_{m \geq 1}$ is increasing. That is,

\begin{lem} \label{lem:increasing_lambda}
For all $m \geq 1$, we have $\Lambda_{g^{2m}}^{g^{-m}} \subseteq \Lambda_{g^{2(m+1)}}^{g^{-(m+1)}}$.
\end{lem}

\begin{proof}
Lemma \ref{lem:base_T} implies the inclusion $\Lambda_{g^{2m}}^{g} \subseteq \Lambda_{g^{2m + 1}} \subseteq \Lambda_{g^{2m+2}}$.
After translating these subgraphs by $g^{-(m+1)}$, we obtain the inclusion of the statement.
\end{proof}

Recall a subgraph is said to be weakly convex if the inclusion is an isometric embedding.
For all $m \geq 1$, by Proposition \ref{prop:thick_convex}, $\Lambda_{g^{2m}}^{g^{-m}}$ is weakly convex.
Combining this fact with Lemma \ref{lem:increasing_lambda}, we deduce $\mathcal{T}_g$ is an increasing union of weakly convex subgraphs.
This implies the following lemma.

\begin{lem} \label{lem:T_weak_convex}
$\mathcal{T}_g$ is weakly convex.
\end{lem}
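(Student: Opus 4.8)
The plan is to deduce the weak convexity of $\mathcal{T}_g = \bigcup_{m\geq 1}\Lambda_{g^{2m}}^{g^{-m}}$ from the already-established weak convexity of each $\Lambda_{g^{2m}}$ (Proposition \ref{prop:thick_convex}), using an exhaustion argument. First I would observe that conjugation by $g^{-m}$ is a simplicial automorphism of $\Gamma^e$, so $\Lambda_{g^{2m}}^{g^{-m}}$ is weakly convex for every $m$, and it suffices to show the nested family $\{\Lambda_{g^{2m}}^{g^{-m}}\}_m$ is an increasing union of weakly convex subgraphs whose union is still weakly convex. The key inclusion to verify is $\Lambda_{g^{2m}}^{g^{-m}} \subseteq \Lambda_{g^{2(m+1)}}^{g^{-(m+1)}}$, equivalently $\Lambda_{g^{2m}}^{g} \subseteq \Lambda_{g^{2(m+1)}}$ (after conjugating both sides by $g^{m+1}$); this should follow from the fact that every syllable decomposition of $g^{2m}$, when sandwiched as $g^{-1}(g^{2m})g$ inside a syllable decomposition of $g^{2(m+1)}$, still contributes all of its translated copies of $\Gamma$ to $\Lambda_{g^{2(m+1)}}$. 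Here I would use that $g$ is cyclically syllable-reduced loxodromic, so that by Lemma \ref{lem:cyclic_syllable} powers of $g$ have syllable length multiplying additively, and a syllable decomposition of $g^{2m}$ extends to one of $g^{2(m+1)}$ by prepending/appending copies of a fixed syllable decomposition of $g$.

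Next, given two vertices $x, y \in \mathcal{T}_g$, I would pick $m$ large enough that both lie in $\Lambda_{g^{2m}}^{g^{-m}}$ (possible since the family is increasing and exhausts $\mathcal{T}_g$). By weak convexity of $\Lambda_{g^{2m}}^{g^{-m}}$ there is a geodesic of $\Gamma^e$ from $x$ to $y$ lying entirely inside $\Lambda_{g^{2m}}^{g^{-m}} \subseteq \mathcal{T}_g$. Since $\Gamma^e$-geodesics are measured by $d_{\Gamma^e}$ and $\Lambda_{g^{2m}}^{g^{-m}}$ is an isometrically embedded subgraph, this path realizes $d_{\Gamma^e}(x,y)$, so it is a geodesic of $\mathcal{T}_g$ as well; hence the inclusion $\mathcal{T}_g \hookrightarrow \Gamma^e$ is isometric, i.e. $\mathcal{T}_g$ is weakly convex. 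The only subtlety is making sure "weakly convex" here means isometrically embedded (as stated in the sentence preceding the lemma), so that a geodesic lying in a weakly convex subgraph is automatically a geodesic of the ambient graph, and restricting to a nested weakly convex piece does not change distances.

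The main obstacle I expect is the inclusion $\Lambda_{g^{2m}}^{g^{-m}} \subseteq \Lambda_{g^{2(m+1)}}^{g^{-(m+1)}}$: one must check carefully that every translate $\Gamma^{s_i\cdots s_1}$ arising from \emph{some} syllable decomposition of $g^{2m}$ reappears — up to the conjugation adjustment by $g$ — as a translate arising from some syllable decomposition of $g^{2(m+1)}$. This requires using Lemma \ref{lem:z_invariant} and Lemma \ref{lem:syl_express} to track how syllable decompositions of $g^{2m}$ sit inside those of $g^{2(m+1)}$, together with the cyclically-syllable-reduced hypothesis so that no cancellation collapses the longer word. Once this bookkeeping is done, the exhaustion argument is routine. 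A cleaner alternative, if it works, is to show directly that $\mathcal{T}_g$ equals an increasing union indexed differently (e.g. by conjugating so the basepoint is centered), but I expect the inclusion above to be the crux either way.
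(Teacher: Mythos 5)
Your proposal is correct and takes essentially the same route as the paper: both arguments pick $M$ large enough that $x, y \in \Lambda_{g^{2M}}^{g^{-M}}$ and invoke Proposition \ref{prop:thick_convex} (together with the fact that conjugation is a simplicial automorphism of $\Gamma^e$) to obtain a geodesic joining them inside $\mathcal{T}_g$. Your explicit check of the nesting $\Lambda_{g^{2m}}^{g^{-m}} \subseteq \Lambda_{g^{2(m+1)}}^{g^{-(m+1)}}$ merely spells out a step the paper leaves implicit; just note the relevant sandwich is $g^{2(m+1)} = g \cdot g^{2m} \cdot g$ rather than $g^{-1} g^{2m} g$, with the cyclically syllable-reduced hypothesis (Lemma \ref{lem:cyclic_syllable}) guaranteeing the concatenated word is again a syllable decomposition.
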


For each $x \in \mathcal{T}_g$, by Lemma \ref{lem:increasing_lambda}, there exists $m$ such that $x \in \Lambda_{g^{2m}}^{g^{-m}}$.
Using Lemma \ref{lem:base_T}, we get $$x^g \in \Lambda_{g^{2m}}^{g^{-m+1}} \subseteq \Lambda_{g^{2m+2}}^{g^{-m-1}} \subseteq \mathcal{T}_g.$$
So the next lemma holds.

\begin{lem}
$\mathcal{T}_g$ is $\langle g \rangle$-invariant.
\end{lem}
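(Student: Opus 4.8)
The plan is to establish the single–generator statement $\mathcal{T}_g^{\,g}=\mathcal{T}_g$; the full $\langle g\rangle$–invariance then follows formally, since conjugation by $g$ is a simplicial automorphism of $\Gamma^e$, so applying the automorphism $x\mapsto x^{g^{-1}}$ to $\mathcal{T}_g^{\,g}=\mathcal{T}_g$ gives $\mathcal{T}_g^{\,g^{-1}}=\mathcal{T}_g$, and inductively $\mathcal{T}_g^{\,g^{k}}=\mathcal{T}_g$ for every $k\in\mathbb{Z}$. Because that automorphism commutes with unions and sends each translate $\Gamma^{w}$ to $\Gamma^{wg}$, we have
\[
\mathcal{T}_g^{\,g}=\bigcup_{m\geq1}\Lambda_{g^{2m}}^{\,g^{-m}g}=\bigcup_{m\geq1}\Lambda_{g^{2m}}^{\,g^{1-m}},
\]
so the task reduces to proving $\bigcup_{m\geq1}\Lambda_{g^{2m}}^{\,g^{1-m}}=\bigcup_{m\geq1}\Lambda_{g^{2m}}^{\,g^{-m}}$ as subgraphs of $\Gamma^e$.

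The heart of the argument is a monotonicity lemma for the graphs $\Lambda_{g^k}$: \emph{for the cyclically syllable-reduced loxodromic $g$, for positive integers $a\leq b$, and for an integer $c$ with $0\leq c\leq b-a$, one has $\Lambda_{g^a}^{\,g^{c}}\subseteq\Lambda_{g^b}$.} To prove it I would fix any syllable decomposition $s_N\dots s_1$ of $g^{a}$ together with the reference decomposition $g=t_n\dots t_1$. Since a loxodromic is never a star word (by Proposition~\ref{prop:ellip}), Lemma~\ref{lem:cyclic_syllable} gives $\|g^{k}\|_{\mathrm{syl}}=k\|g\|_{\mathrm{syl}}$ for all $k\geq1$; in particular $N=an$, and the word
\[
W:=(t_n\dots t_1)^{\,b-a-c}\,(s_N\dots s_1)\,(t_n\dots t_1)^{\,c}
\]
is a product of exactly $bn=\|g^{b}\|_{\mathrm{syl}}$ syllables that represents $g^{b}$, hence — by the very definition of a syllable decomposition — is one. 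Reading $W$ from the right, the partial product after the first $cn$ syllables equals $g^{c}$, and after the first $cn+i$ syllables it equals $(s_i\dots s_1)g^{c}$ for $i=1,\dots,N$; thus each constituent $\Gamma^{g^{c}}$ and $\Gamma^{(s_i\dots s_1)g^{c}}$ of $\Lambda_{g^a}^{\,g^{c}}$ occurs among the translates defining $\Lambda_{g^b}$. Together with $\Gamma\subseteq\Lambda_{g^b}$, this proves the lemma.

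Granting the lemma, both inclusions become one-line index manipulations with $b=a+2$. For $\mathcal{T}_g\subseteq\mathcal{T}_g^{\,g}$: fix $m\geq1$; conjugating both sides by $g^{m}$ turns the desired containment $\Lambda_{g^{2m}}^{\,g^{-m}}\subseteq\Lambda_{g^{2m+2}}^{\,g^{-m}}$ into $\Lambda_{g^{2m}}\subseteq\Lambda_{g^{2m+2}}$, which is the lemma with $(a,b,c)=(2m,2m+2,0)$, and the right-hand side $\Lambda_{g^{2m+2}}^{\,g^{-m}}=\Lambda_{g^{2(m+1)}}^{\,g^{1-(m+1)}}$ is the $(m+1)$-th term of $\mathcal{T}_g^{\,g}$. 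For $\mathcal{T}_g^{\,g}\subseteq\mathcal{T}_g$: fix $m\geq1$; conjugating both sides by $g^{m+1}$ turns the desired containment $\Lambda_{g^{2m}}^{\,g^{1-m}}\subseteq\Lambda_{g^{2m+2}}^{\,g^{-(m+1)}}$ into $\Lambda_{g^{2m}}^{\,g^{2}}\subseteq\Lambda_{g^{2m+2}}$, which is the lemma with $(a,b,c)=(2m,2m+2,2)$, and the right-hand side is the $(m+1)$-th term of $\mathcal{T}_g$. Combining the two inclusions gives $\mathcal{T}_g^{\,g}=\mathcal{T}_g$, and hence $\mathcal{T}_g$ is $\langle g\rangle$–invariant.

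I expect the only delicate point to be the assertion that $W$ is genuinely a syllable decomposition of $g^{b}$: a priori two syllables at a junction of the three blocks might merge or cancel, but that would make $W$ a product of fewer than $bn$ syllables representing $g^{b}$, contradicting $\|g^{b}\|_{\mathrm{syl}}=bn$; so the argument really does hinge on Lemma~\ref{lem:cyclic_syllable}, and through it on $g$ being a cyclically syllable-reduced loxodromic. The remaining work — identifying which partial product of $W$ realizes which translate of $\Gamma$, and the bookkeeping of conjugating subgraphs by powers of $g$ — is routine.
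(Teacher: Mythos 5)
Your proof is correct and rests on the same mechanism as the paper's own argument: concatenate syllable decompositions of powers of $g$ and use Lemma \ref{lem:cyclic_syllable} (additivity of syllable length for a cyclically syllable-reduced loxodromic, which is not a star word) to see that the concatenation is again a syllable decomposition, so that shifted copies of $\Lambda_{g^{2m}}$ embed into a larger $\Lambda_{g^{k}}$. The paper runs this pointwise, showing $x^{g^{\pm 1}} \in \Lambda_{g^{4m}}^{g^{-2m}} \subseteq \mathcal{T}_g$, whereas you package the identical idea as the general containment $\Lambda_{g^{a}}^{g^{c}} \subseteq \Lambda_{g^{b}}$ (with $0 \leq c \leq b-a$) and deduce the set equality $\mathcal{T}_g^{g} = \mathcal{T}_g$ with tighter indices ($2m+2$ instead of $4m$); this is an organizational, not a substantive, difference.
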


Because $g$ is cyclically syllable-reduced loxodromic, only finitely many elliptic words can be realized by subwords of reduced words representing positive powers of $g$ by Proposition \ref{prop:num_elliptic}.
This regulates the number of intersecting edges on $\mathcal{T}_g$ not to be infinite.
So we obtain the following result.

\begin{lem} \label{lem:local_finite_T}
$\mathcal{T}_g$ is locally finite.
\end{lem}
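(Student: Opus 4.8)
## Proof Plan for Local Finiteness of $\mathcal{T}_g$

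The plan is to show that each vertex $x \in \mathcal{T}_g$ has only finitely many neighbors by reducing to a statement about the building blocks $\Lambda_{g^{2m}}$ and exploiting the fact that $g$ is cyclically syllable-reduced loxodromic. First I would reduce to a uniform statement: since $\mathcal{T}_g = \bigcup_{m \geq 1} \Lambda_{g^{2m}}^{g^{-m}}$ and each $\Lambda_{g^{2m}}^{g^{-m}} \subseteq \Lambda_{g^{2(m+1)}}^{g^{-(m+1)}}$ (which follows from the same kind of containment argument used in the $\langle g \rangle$-invariance lemma, since a syllable decomposition of $g^{2m}$ sits inside one of $g^{2(m+1)}$ after conjugating appropriately), a vertex $x$ and all of its potential neighbors eventually live in a single $\Lambda_{g^{2M}}^{g^{-M}}$. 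So it suffices to bound the degree of a vertex inside $\Lambda_{g^{2M}}$ by a constant independent of $M$; conjugation by $g^{-M}$ is an isometry of $\Gamma^e$ and preserves degrees.

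The heart of the argument is therefore: every vertex of $\Lambda_{g^{2M}}$ has degree bounded independently of $M$. A vertex $x \in \Lambda_{g^{2M}}$ (other than those in $\Gamma$, which I handle separately but identically) lies on some $\Gamma^{s_i \dots s_1}$ for a syllable decomposition $g^{2M} = s_N \dots s_1$, so $x = v^{s_i \dots s_1}$ for some $v \in V(\Gamma)$. Any neighbor $y$ of $x$ in $\Lambda_{g^{2M}}$ lies on some $\Gamma^{s'_j \dots s'_1}$ for possibly another syllable decomposition $g^{2M} = s'_N \dots s'_1$. The key point is that $[x,y]=1$ forces $y$ to commute with $x = v^{s_i \dots s_1}$, hence — after conjugating everything back by $(s_i \dots s_1)^{-1}$ — $y^{(s_i\dots s_1)^{-1}}$ commutes with $v$, so it lies in $\operatorname{st}_\Gamma(v)^{s_i \dots s_1}$ translated by some element. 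More precisely, by the reduction lemmas of the previous subsection (Lemmas \ref{lem:word_reduce0}--\ref{lem:word_reduce3}), the "difference word" $w = s'_j \dots s'_1 (s_i \dots s_1)^{-1}$ representing the relative position of the two translates $\Gamma^{s'_j \dots s'_1}$ and $\Gamma^{s_i \dots s_1}$ can be reduced to an elliptic word, and since $y$ commutes with $v^{s_i\dots s_1}$, the relevant part of $w$ lies in (a conjugate of) $\operatorname{st}_\Gamma(v)$.

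What remains is to bound the number of such $y$. Each neighbor $y$ is determined by: (i) a vertex $u \in \operatorname{st}_\Gamma(v)$ (finitely many choices), and (ii) the elliptic conjugating element $w$ moving $u$ to $y$. So the whole estimate comes down to showing there are only finitely many elliptic elements that can arise this way, and here I would invoke Proposition \ref{prop:num_elliptic}: for a cyclically syllable-reduced loxodromic $g$, the set of elliptic elements realized as subwords of powers of $g$ is finite, with syllable length bounded by $\|g\|_{\operatorname{syl}}(2|V(\Gamma)|+1)$. The difference word $w$ between two translates both participating in a syllable decomposition of $g^{2M}$ is exactly such a subword (up to the reductions above), so there are only finitely many possibilities for $w$, uniformly in $M$. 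Combining (i) and (ii) gives a bound on $\deg(x)$ of roughly $(\deg_\Gamma v + 1)$ times the number of relevant elliptic subwords, independent of $M$.

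I expect the main obstacle to be the bookkeeping in step two: carefully extracting, from the hypothesis that $y$ is a neighbor of $x$ inside $\Lambda_{g^{2M}}$, a genuine elliptic \emph{subword of a power of $g$} to which Proposition \ref{prop:num_elliptic} applies — in particular verifying that the difference word, after the transposition-and-cancellation moves of Lemmas \ref{lem:word_reduce0} and \ref{lem:word_reduce2}, really does embed as a subword of some $g^k$ rather than merely being an abstract elliptic element. Once that identification is made cleanly, the finiteness is immediate from the earlier proposition, and the passage from "$\Lambda_{g^{2M}}$ has uniformly bounded degree" to "$\mathcal{T}_g$ is locally finite" is the routine direct-limit argument sketched in the first paragraph.
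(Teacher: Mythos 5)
Your plan follows the paper's skeleton: pass to the translates of $\Gamma$ that build $\mathcal{T}_g$, compare the conjugators of the translate through $x$ and the translate through a nearby vertex, and feed the resulting ``difference word'' into Proposition~\ref{prop:num_elliptic}. The ascending-chain reduction to a single $\Lambda_{g^{2M}}^{g^{-M}}$ is correct (the paper achieves the same effect by comparing prefixes of $g^{2m}$ and $g^{2m'}$ directly). The genuine gap is your key intermediate claim that the difference word $w = s'_j\dots s'_1(s_i\dots s_1)^{-1}$ between the translate containing $x$ and the translate containing a neighbour $y$ ``can be reduced to an elliptic word''. That is false in general: the reductions of Lemmas~\ref{lem:word_reduce0}--\ref{lem:word_reduce3} yield an elliptic word only when \emph{both} complementary blocks of syllables are nonempty; in the nested case --- for instance the translates $\Gamma$ and $\Gamma^{g^{2M}}$, both of which occur in $\Lambda_{g^{2M}}$ --- the difference word is an honest subword of a power of $g$, possibly loxodromic and of unbounded syllable length, so Proposition~\ref{prop:num_elliptic} cannot be applied to $w$ itself. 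Such far-apart translates of course contain no neighbour of $x$, but to see that you must actually use the adjacency: writing $x=v^{h}$, $y=u^{h'}$, the relation $[v,u^{h'h^{-1}}]=1$ splits $h'h^{-1}=LR$ with $[u,L]=1$ and $[v,R]=1$ (the centralizer argument in the proof of Lemma~\ref{lem:sylsep}), so $y=u^{Rh}$ with $u\in\operatorname{st}_\Gamma(v)$ and $R$ a star word about $v$; the finiteness then has to be extracted for $R$ (a star word realized, after the commutation moves, as a subword of a power of $g$), not for $w$. Your ``main obstacle'' paragraph worries about whether an already-elliptic element embeds as a subword of some $g^k$, but the prior issue --- that $w$ need not be elliptic at all, and that adjacency is what rescues the argument --- is the step that actually requires proof.

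The paper avoids this bookkeeping by comparing only translates that contain $x$ itself: if $x$ lies in two prefix-translates, the two copies of $\Gamma$ intersect, so the difference word is outright the identity or a star word, Proposition~\ref{prop:num_elliptic} bounds the number $N$ of star words arising as subwords of powers of $g$, and each of the at most $N$ such translates contributes at most $\lvert V(\Gamma)\rvert$ vertices, giving the bound $N\lvert V(\Gamma)\rvert$ on the star of $x$ in $\mathcal{T}_g$. Your count --- (finitely many $u\in\operatorname{st}_\Gamma(v)$) times (finitely many admissible conjugators) --- has the right shape and would give the same bound, and indeed the item (i) claim $u\in\operatorname{st}_\Gamma(v)$ becomes correct once the splitting above is in place; but as written the finiteness of the conjugators is not established. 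Either carry out the splitting argument for $R$, or switch, as the paper does, to translates through $x$, and the proof closes.
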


\begin{proof}
Choose a vertex $x \in \mathcal{T}_g$.
We need only to find an upper bound of the number of edges incident to $x$.
Since $x \in \Lambda_{g^{2m}}^{g^{-m}}$ for some $m \geq 1$, there exists a syllable decomposition $g^{2m} = s_k \dots s_1$ such that $x \in \Gamma^{s_{i} \dots s_1 g^{-m}}$ for some $i \in \{1, \dots, k\}$.

Assume  an edge $e$ on $\mathcal{T}_g$ contains $x$ as an endpoint.
Then $e \subset \Lambda_{g^{2m'}}^{g^{-m'}}$ for some $m' \geq 1$.
So we get another syllable decomposition $g^{2m'} = s_{k'}' \dots s_1'$ satisfying $x \in e \subset \Gamma^{s_{i'}' \dots s_1'g^{-m'}}$ for some $i' \in \{0, \dots, k'\}$.

Since $x$ lies on $\left( \Gamma^{s_{i} \dots s_1 g^{-m}} \right) \cap \left( \Gamma^{s_{i'}' \dots s_1'g^{-m'}} \right)$, the translated image $\Gamma \cap \left( \Gamma^{(s_{i'}' \dots s_1'g^{-m'}) (s_i \dots s_1 g^{-m})^{-1}} \right)$ is also nonempty.
So one of the following holds: the words $s_{i'}' \dots s_1'g^{-m'}$ and $s_i \dots s_1 g^{-m}$ represent an identical element, or a reduced form of the product $(s_{i'}' \dots s_1'g^{-m'}) (s_i \dots s_1 g^{-m})^{-1}$ is a star word.
In conclusion, all edges of $\mathcal{T}_g$ intersecting $x$ are contained in $\bigcup\{ \Gamma^{s_{i'}' \dots s_1' g^{-m'}} \mid (s_{i'}' \dots s_1' g^{-m'}) (s_i \dots s_1 g^{-m})^{-1}~\text{is trivial or a star word} \}$.

Because $g$ is loxodromic, the number of star words realized by subwords of powers of $g$ is finite by Proposition \ref{prop:num_elliptic}.
If $N$ is the number of such star words, then the cardinality of vertices of $\operatorname{st}_{\Gamma^e}(x) \cap \mathcal{T}_g$ is bounded above by $2N \lvert V(\Gamma) \rvert$.
\end{proof}

\begin{lem} \label{lem:T_cocompact}
The action of $\langle g \rangle$ on $\mathcal{T}_g$ is cocompact.
\end{lem}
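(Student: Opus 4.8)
The goal is to show that $\langle g \rangle$ acts cocompactly on $\mathcal{T}_g = \bigcup_{m \geq 1} \Lambda_{g^{2m}}^{g^{-m}}$ when $g$ is cyclically syllable-reduced loxodromic. Since we already know $\mathcal{T}_g$ is connected and $\langle g \rangle$-invariant, the plan is to exhibit a finite subgraph $K \subseteq \mathcal{T}_g$ whose $\langle g \rangle$-translates cover $\mathcal{T}_g$. The natural candidate, in light of the remark preceding the statement, is to take $K$ to be a suitable finite union of translates $\Gamma^w$ where $w$ runs over the finitely many ``fundamental'' subwords of a single period. More precisely, fix a syllable decomposition $g = s_n \dots s_1$; for a power $g^{2m}$, every translate $\Gamma^{s_j' \dots s_1'}$ appearing in $\Lambda_{g^{2m}}$ contains $z_\ell^{g^k}$ for some $\ell \in \{1,\dots,n\}$ and some integer $k$, as noted just before the lemma (using Lemma \ref{lem:z_invariant} to see the vertices $z_\ell$ are intrinsic to $g$). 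The first step is to make this precise: show there is a uniform bound, independent of $m$ and of the chosen syllable decomposition of $g^{2m}$, on how far $k$ can stray — this is exactly the content of Lemma \ref{lem:syl_perm}, which gives $|i - \sigma^{-1}(i)| \leq n|V(\Gamma)|$, so any translate $\Gamma^w$ participating in a syllable decomposition of $g^{2m}$ satisfies $w = w' \cdot g^k$ for some $k$ with $0 \leq k \leq 2m$ and some word $w'$ of bounded syllable length (bounded by roughly $\|g\|_{\mathrm{syl}}(2|V(\Gamma)|+1)$, by Proposition \ref{prop:num_elliptic}).

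With that in hand, the second step is to define $K$ as the union of $\Gamma$ together with all translates $\Gamma^{s_i \dots s_1}$, $\Gamma^{w'}$ ranging over the finite list of bounded-syllable-length words $w'$ identified above (intersected with $\mathcal{T}_g$), shifted so as to sit near a single fundamental domain; formally one can take $K = \mathcal{T}_g \cap \bigl( \bigcup_{w \in W} \Gamma^w \bigr)$ where $W$ is a finite set of words each of word length at most some explicit constant $C = C(\Gamma, g)$. By Lemma \ref{lem:local_finite_T}, $\mathcal{T}_g$ is locally finite, and $K$ is contained in a bounded metric neighborhood of a single vertex (since each $\Gamma^w$ has diameter $\leq \mathrm{diam}(\Gamma)$ and the words in $W$ have bounded length, hence bounded translation distance), so $K$ is a finite subgraph. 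Then the third step is to verify covering: given any vertex $x \in \mathcal{T}_g$, it lies in some $\Lambda_{g^{2m}}^{g^{-m}}$, hence in some $\Gamma^{s_i \dots s_1 g^{-m}}$ for a syllable decomposition $g^{2m} = s_k \dots s_1$; by the bound from Step 1 we can write $s_i \dots s_1 g^{-m} = w' g^{k}$ for some $w' \in W$ and integer $k$, so $x \in \Gamma^{w' g^k} = (\Gamma^{w'})^{g^k}$, and applying $g^{-k}$ puts $x$ in the translate $(\Gamma^{w'})$, which lies in $K$ after possibly absorbing a bounded power of $g$ into the finite list — i.e. $x$ is in the $\langle g \rangle$-orbit of $K$.

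The main obstacle I expect is the bookkeeping in Step 1: one must pass from ``$x \in \Gamma^{s_i \dots s_1 g^{-m}}$ for some syllable decomposition of $g^{2m}$'' to ``$x \in (\Gamma^{w'})^{g^k}$ with $w'$ in a fixed finite set,'' and this requires simultaneously (a) using $g$ cyclically syllable-reduced so that $(s_n \dots s_1)^{2m}$ is itself a syllable decomposition of $g^{2m}$ (Lemma \ref{lem:cyclic_syllable}), (b) comparing an arbitrary syllable decomposition of $g^{2m}$ to this standard one via the permutation $\sigma$ controlled by Lemma \ref{lem:syl_perm}, and (c) using Proposition \ref{prop:num_elliptic} to see that the ``fractional part'' word $w'$ — the portion of the prefix not accounted for by a whole number of copies of $g$ — has uniformly bounded syllable length, and hence ranges over a finite set because the syllable exponents of powers of $g$ are themselves bounded. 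Once the finiteness of this index set $W$ is established, the cocompactness is immediate. A secondary, more routine point is checking that $K$ really is a subgraph of $\mathcal{T}_g$ (not just a set of vertices) and that finitely many $\langle g\rangle$-translates of $K$ cover the edges of $\mathcal{T}_g$ as well as its vertices; this follows because each edge lies in some single translate $\Gamma^w$, so the same argument applied to an edge rather than a vertex works verbatim.
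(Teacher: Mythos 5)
Your strategy is sound and would go through, but at the key covering step it takes a more laborious route than the paper. The paper's proof uses the remark you quote directly: by Lemma \ref{lem:z_invariant}, every translate $\Gamma^{s_j'\dots s_1'}$ occurring in $\Lambda_{g^{2m}}$ contains a vertex $z_\ell^{g^k}$, so every point of $\mathcal{T}_g$ lies within $\operatorname{diam}(\Gamma)$ of the $\langle g\rangle$-orbit of the finite set $\{z_1,\dots,z_n\}$; the fundamental piece is then $\mathcal{T}_g\cap\bigl(\bigcup_{i=1}^n\mathcal{N}_{\operatorname{diam}(\Gamma)}(z_i)\bigr)$, which is compact by Lemma \ref{lem:local_finite_T}, and no control of the offset $k$ is ever needed since it is absorbed by translating by $g^{-k}$. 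You instead build an explicit finite list $W$ of fundamental translates $\Gamma^{w'}$, which forces you to prove the factorization ``every rightmost prefix of every syllable decomposition of $g^{2m}$ equals $w'g^{k}$ with $\|w'\|_{\rm syl}$ uniformly bounded.'' That factorization is true, but it is the real content of your Step 1 and your plan only gestures at it: Lemma \ref{lem:syl_perm} gives the displacement bound $\lvert i-\sigma^{-1}(i)\rvert\le n\lvert V(\Gamma)\rvert$, and from it one still has to argue, via Lemma \ref{lem:syl_express} (non-commuting syllables keep their relative order), that the syllables of index at most $j-n\lvert V(\Gamma)\rvert$ can be commuted to the right end of the prefix, where they assemble into $(s_{r}\dots s_1)g^{q}$, leaving a head of boundedly many syllables. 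Also, Proposition \ref{prop:num_elliptic} is not the right citation for bounding the fractional part: it bounds syllable lengths of \emph{elliptic} subwords, and $w'$ need not be elliptic; the bound and the finiteness of $W$ follow instead from the displacement bound together with the fact that all syllables of powers of $g$ are copies of $s_1,\dots,s_n$. With these points repaired your argument is correct; what the paper's marked-vertex argument buys is precisely the avoidance of this bookkeeping, at the cost of a less explicit fundamental domain (a metric neighborhood rather than a union of copies of $\Gamma$).
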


\begin{proof}
For each $i \in \{1, \dots, n\}$, let $v_i$ denote the vertex supporting $s_i$, and write $z_i := v_i^{s_i \dots s_1}$.
Lemma \ref{lem:z_invariant} says $z_i$ is invariant from a syllable decomposition of $g$.
So we deduce that for every $m \geq 1$ and a syllable decomposition $g^{2m} = s_{2mn}' \dots s_1'$ and for every $j$, the graph $\Gamma^{s_j' \dots s_1'}$ contains $z_\ell^{g^k}$ for some $\ell$ and $k$.

Choose $x \in \mathcal{T}_g$.
Then there exists $m \geq 1$ such that $x^{g^m} \in \Lambda_{g^{2m}}$.
So for some syllable decomposition $g^{2m} = s_{2mn}' \dots s_1'$ and $j$, we have $x^{g^m} \in \Gamma^{s_j' \dots s_1'}$.
By the above, there exists $\ell$ and $k$ so that $z_\ell^{g^k} \in \Gamma^{s_j' \dots s_1'}$.
So we have $d^{\Gamma^e}(x, z_\ell^{g^k}) \leq \operatorname{diam}(\Gamma)$.

This implies that if $\mathcal{N}_{\operatorname{diam}(\Gamma)}(z_i)$ is the closed $\operatorname{diam}(\Gamma)$-neighborhood of $z_i$, then $\bigcup_{m \in \mathbb{Z}}\bigcup_{i = 1}^n \mathcal{N}_{\operatorname{diam}(\Gamma)}(z_i^{g^m})$ contains $\mathcal{T}_g$.
By Lemma \ref{lem:local_finite_T}, the intersection $\mathcal{T}_g \cap ( \bigcup_{i = 1}^n \mathcal{N}_{\operatorname{diam}(\Gamma)}(z_i) )$ is locally finite and bounded.
Then this is compact.
Since the $\langle g \rangle$-orbit of $\mathcal{T}_g \cap ( \bigcup_{i = 1}^n \mathcal{N}_{\operatorname{diam}(\Gamma)}(z_i) )$ covers $\mathcal{T}_g$, therefore, the action of $\langle g \rangle$ on $\mathcal{T}_g$ is cocompact.
\end{proof}

\begin{prop} \label{prop:thick_axial}
For a cyclically syllable-reduced loxodromic $g \in A(\Gamma)$ and $m \geq 1$, the subgraph $\mathcal{T}_g$ is an axial subgraph of $g$.
\end{prop}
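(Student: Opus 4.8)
The plan is to simply assemble the four structural lemmas established in this subsection and match them against the definition of an axial subgraph. Recall that a subgraph $\mathcal{A}_g \subseteq \Gamma^e$ is an axial subgraph of a loxodromic $g$ exactly when it is (i) weakly convex, (ii) locally finite, (iii) preserved by $g$, and (iv) such that the induced action of $\langle g \rangle$ on it is cocompact. Each of these has already been verified for $\mathcal{T}_g = \bigcup_{m \geq 1} \Lambda_{g^{2m}}^{g^{-m}}$: weak convexity is the lemma above deduced from Proposition \ref{prop:thick_convex} (any two vertices of $\mathcal{T}_g$ lie in a single $\Lambda_{g^{2M}}^{g^{-M}}$, which is weakly convex, so a geodesic between them stays in $\mathcal{T}_g$); $\langle g \rangle$-invariance is the corresponding lemma above, where cyclic syllable-reducedness of $g$ is used so that powers of $g$ have syllable decompositions refining those of $g$ (via Lemma \ref{lem:cyclic_syllable}), hence translating a syllable decomposition of $g^{2m}$ by a power of $g$ lands inside a syllable decomposition of a larger even power; local finiteness is Lemma \ref{lem:local_finite_T}, resting on the finiteness of the set of star subwords of powers of $g$ (Proposition \ref{prop:num_elliptic}); and cocompactness of the $\langle g \rangle$-action is the last lemma, which uses that the vertices $z_i$ are independent of the chosen syllable decomposition (Lemma \ref{lem:z_invariant}), so that $\mathcal{T}_g$ is covered by the $\langle g \rangle$-orbit of the finitely many bounded sets $\mathcal{T}_g \cap \mathcal{N}_{\operatorname{diam}(\Gamma)}(z_i)$.

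Concretely I would write: by Proposition \ref{prop:thick_convex} and the weak-convexity lemma above, $\mathcal{T}_g$ is weakly convex; by the invariance lemma above, $g$ preserves $\mathcal{T}_g$; by Lemma \ref{lem:local_finite_T}, $\mathcal{T}_g$ is locally finite; and by the cocompactness lemma above, the induced action of $\langle g \rangle$ on $\mathcal{T}_g$ is cocompact. These are precisely the defining conditions for an axial subgraph, so $\mathcal{T}_g$ is an axial subgraph of $g$. The parameter $m$ in the statement plays no role, since $\mathcal{T}_g$ is a fixed subgraph not depending on any particular $m$; I would either drop it or note this parenthetically.

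Since every ingredient is already in place, there is no genuine obstacle in this proposition — all the substance lies in the preceding lemmas. The only point meriting a sentence of care is that the definition of an axial subgraph requires $g$ itself, and not merely some power $g^k$, to preserve $\mathcal{T}_g$ and to act cocompactly; both are part of the lemmas as stated, so nothing further is needed. If one wishes, one can add a remark that, being locally finite with a cocompact $\langle g \rangle$-action and $g$ loxodromic, $\mathcal{T}_g$ is quasi-isometric to a line by the \v{S}varc--Milnor lemma, which is what makes the notions of end-separating set and width from Section \ref{sec:finiteness} applicable to $\mathcal{T}_g$; but this is not needed for the statement itself.
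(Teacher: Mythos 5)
Your proposal is correct and matches the paper's own proof, which likewise just assembles the preceding lemmas (weak convexity, $\langle g\rangle$-invariance, local finiteness via Lemma \ref{lem:local_finite_T}, and cocompactness) and cites the definition of an axial subgraph. Your parenthetical observations — that the parameter $m$ in the statement is vestigial and that local finiteness must be checked alongside the three properties the paper's concluding sentence names explicitly — are accurate but do not change the argument.
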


\begin{proof}
In conclusion, $\mathcal{T}_g$ is weakly convex $\langle g \rangle$-invariant subgraph where $\langle g \rangle$ acts cocompactly.
Hence, $\mathcal{T}_g$ is an axial subgraph of $g$.
\end{proof}

Every loxodromic is conjugate to a cyclically syllable-reduced loxodromic.
So Proposition \ref{prop:thick_axial} implies every loxodromic has an axial subgraph.

\begin{thm} \label{thm:generalcasefiniteness}
For a connected finite simplicial graph $\Gamma$, the action of the right-angled Artin group $A(\Gamma)$ on the extension graph $\Gamma^e$ satisfies the finiteness property.
\end{thm}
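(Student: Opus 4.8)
The statement to be proven, Theorem \ref{thm:generalcasefiniteness}, asserts that the action of $A(\Gamma)$ on $\Gamma^e$ has the finiteness property, i.e., every loxodromic element of $A(\Gamma)$ admits an axial subgraph. The plan is to reduce this to the case of a cyclically syllable-reduced loxodromic, which has already been handled by Proposition \ref{prop:thick_axial}. So the only real work left is the reduction.

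\textbf{Step 1: Reduce to cyclically syllable-reduced representatives.} Given an arbitrary loxodromic $g \in A(\Gamma)$, I would first observe that there exists $h \in A(\Gamma)$ such that $g' := g^h = h^{-1}gh$ is cyclically syllable-reduced; indeed, any element achieving the minimum syllable length in its conjugacy class is cyclically syllable-reduced by definition, and $g'$ is still loxodromic because the asymptotic translation length is a conjugacy invariant (conjugation acts by an isometry of $\Gamma^e$, as $h \in A(\Gamma)$ acts simplicially on $\Gamma^e$). Then by Proposition \ref{prop:thick_axial}, $g'$ has an axial subgraph $\mathcal{T}_{g'}$: it is weakly convex, $\langle g' \rangle$-invariant, locally finite, and $\langle g' \rangle$ acts on it cocompactly.

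\textbf{Step 2: Transport the axial subgraph back.} Since $h$ acts on $\Gamma^e$ by the simplicial isometry $x \mapsto x^h$, the image $\mathcal{T}_{g'}^{h^{-1}}$ (equivalently, the translate of $\mathcal{T}_{g'}$ under the isometry induced by $h^{-1}$) is again weakly convex and locally finite. Moreover, for any subgraph $\mathcal{A}$ one checks directly that $\mathcal{A}$ is $\langle g'\rangle = \langle g^h \rangle$-invariant if and only if $\mathcal{A}^{h^{-1}}$ is $\langle g \rangle$-invariant, and that cocompactness of the $\langle g' \rangle$-action on $\mathcal{A}$ transfers to cocompactness of the $\langle g \rangle$-action on $\mathcal{A}^{h^{-1}}$, because conjugation by $h^{-1}$ intertwines the two actions. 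Hence $\mathcal{A}_g := \mathcal{T}_{g^h}^{h^{-1}}$ is an axial subgraph of $g$, and the finiteness property holds.

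\textbf{Main obstacle.} Honestly, there is no deep obstacle remaining: all the substance lies in Proposition \ref{prop:thick_axial} and the lemmas feeding it (weak convexity of $\Lambda_g$, hence of $\mathcal{T}_g$; local finiteness via the finiteness of elliptic subwords of powers in Proposition \ref{prop:num_elliptic}; $\langle g \rangle$-invariance; and cocompactness via the translates of the finite neighborhoods of the $z_i$). The only point requiring minor care is making the conjugation-equivariance of the properties (weak convexity, invariance, cocompactness) fully explicit, which amounts to noting that $x \mapsto x^h$ is a graph automorphism of $\Gamma^e$ conjugating the $g$-action to the $g^h$-action. Thus the proof is essentially a one-paragraph assembly: pick a cyclically syllable-reduced conjugate, apply Proposition \ref{prop:thick_axial}, and translate back.
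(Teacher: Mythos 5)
Your proposal is correct and is essentially identical to the paper's own argument: the paper disposes of Theorem \ref{thm:generalcasefiniteness} with the remark that every loxodromic is conjugate to a cyclically syllable-reduced loxodromic, so Proposition \ref{prop:thick_axial} applies. Your Step 2 merely makes explicit the (routine) transport of the axial subgraph $\mathcal{T}_{g^h}$ back by the isometry induced by $h^{-1}$, which the paper leaves implicit.
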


The width of an axial subgraph constructed in Proposition \ref{prop:thick_axial} is dependent on a loxodromic, precisely, on the syllable length of a loxodromic.
Therefore, such an axial subgraph does not have uniform width.

\begin{cor}
Every loxodromic of $A(\Gamma)$ has a rational asymptotic translation length.
\end{cor}

\section{Discrete rational length spectrum: large girth}

In this section, we show that the finiteness constant can be made uniform when the girth of the graph is at least $6$ in Theorem \ref{thm:RAAG_finiteness}. The \emph{girth} of a simplicial graph is the minimum positive length of embedded cycles in the graph, and \emph{from now on suppose $\Gamma$ is a finite connected simplicial graph of girth at least $6$ throughout the section.}

Kim--Koberda \cite[Lemma 3.9]{kim2013embedability} showed that the girth of $\Gamma^e$ is equal to the girth of $\Gamma$.
The following lemma presents an interesting property of $\Gamma^e$ when $\Gamma$ has girth at least $6$.
Compared to Proposition \ref{prop:geod_decom}, the large girth of a graph gives a stronger statement.

\begin{prop} \label{prop:shallow_isom_embed}
Let $g$ be an element of $A(\Gamma)$. Then for every syllable decomposition $g = s_n \dots s_1$ and vertices $u, v \in \Gamma$, some geodesic joining $u$ and $v^g$ is contained in $\Gamma \cup (\bigcup_{i = 1}^n \Gamma^{s_i \dots s_1})$.
\end{prop}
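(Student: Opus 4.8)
The plan is to induct on the syllable length $n$, using Proposition \ref{prop:geod_decom} to reorganize the syllable decomposition and Lemma \ref{lem:girth_intersection} together with Lemma \ref{lem:ayclic_neighbor} to upgrade the conclusion from ``\emph{some} syllable decomposition works'' to ``\emph{every} syllable decomposition works''. The base case $n=1$ is immediate from Proposition \ref{prop:geod_decom}. For the inductive step, fix a syllable decomposition $g = s_n \dots s_1$ and vertices $u,v \in \Gamma$. As in Proposition \ref{prop:geod_decom}, set $v_i$ to be the vertex supporting $s_i$ and $z_i := v_i^{s_i\dots s_1}$; by Lemma \ref{lem:sylsep}, any geodesic $\gamma$ from $u$ to $v^g$ must pass through every $\operatorname{st}_{\Gamma^e}(z_i)$, and after a commuting permutation of syllables we may assume $\gamma$ meets the stars $\operatorname{st}_{\Gamma^e}(z_i)$ in the order $i = 1, 2, \dots, n$.

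The key point is to decompose the decomposition into \textbf{maximal star-word blocks}. Write $g = g_k \cdots g_1$ where each $g_j$ is a maximal run of consecutive syllables $s_b \cdots s_a$ whose supports all lie in a common star $\operatorname{st}_\Gamma(v_a)$; here I would use that two non-commuting syllables separated in the word order force the corresponding $z$'s to be at distance $\geq 2$ (Lemma \ref{lem:suc}\eqref{en1:geo}) and hence contained in a common closed $2$-ball only within a block, where Lemma \ref{lem:ayclic_neighbor} applies. Within each block, apply Lemma \ref{lem:girth_intersection} (after conjugating so the block is based at $\Gamma$): there is a geodesic between the relevant ``entry'' and ``exit'' vertices that passes through the common star $\bigcap_i \operatorname{st}_\Gamma(v_i)$ of the block, in particular through the base vertex $v_a$ of that block, which lies in $\Gamma^{s_{a-1}\cdots s_1}$. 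This lets me pin down, for each block boundary, a vertex that lies in $\Gamma \cup \bigcup_i \Gamma^{s_i\dots s_1}$ and is visited by a geodesic, and then concatenate: between consecutive such pinned vertices one applies the induction hypothesis (the sub-word is shorter) to get a geodesic segment lying in $\Gamma^{(\text{relevant prefix})} \cup (\text{its doublings})$, all of which is inside $\Gamma \cup \bigcup_{i=1}^n \Gamma^{s_i\dots s_1}$. Concatenating these segments gives a geodesic from $u$ to $v^g$ of the required form.

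I would be careful about the subtlety that makes the large-girth statement genuinely stronger than Proposition \ref{prop:geod_decom}: there we were free to permute the syllables, whereas here the decomposition $s_n\dots s_1$ is \emph{fixed}. The mechanism that handles this is exactly Lemma \ref{lem:girth_intersection}, whose conclusion forces the geodesic through a vertex ($v_1$ in its notation) that sits at the start of the given block rather than at some permuted position; the tree structure of $2$-balls (Lemma \ref{lem:ayclic_neighbor}) is what rigidifies this, since in a tree the geodesic between two points in a $2$-ball is unique and its shape is completely determined. So the main obstacle is the bookkeeping needed to glue the per-block geodesics along a common geodesic from $u$ to $v^g$ — one must check that the ``exit vertex'' of block $g_j$ can be taken equal to the ``entry vertex'' of block $g_{j+1}$, or at least that the two chosen geodesic subsegments can be made compatible. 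This should follow because between the last star of block $g_j$ and the first star of block $g_{j+1}$ there is, by the girth hypothesis, only the single connecting edge or vertex forced by Lemma \ref{lem:girth_intersection}, so there is essentially no freedom to reconcile. Once the blocks are glued, the fact that each segment lies in the appropriate conjugate of $\Gamma$ is automatic from the induction hypothesis, and the union over blocks is contained in $\Gamma \cup \bigcup_{i=1}^n \Gamma^{s_i \dots s_1}$ by construction.
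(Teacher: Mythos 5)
There is a genuine gap, and it sits exactly at the point you yourself flag as the crux. After invoking Lemma \ref{lem:sylsep} you say that ``after a commuting permutation of syllables we may assume $\gamma$ meets the stars $\operatorname{st}_{\Gamma^e}(z_i)$ in the order $i=1,2,\dots,n$.'' For this proposition the decomposition $s_n\dots s_1$ is \emph{fixed}: permuting the syllables changes the prefixes $s_i\dots s_1$ and hence changes the target set $\Gamma\cup\bigcup_i\Gamma^{s_i\dots s_1}$ (the vertices $z_i$ are permutation-invariant by Lemma \ref{lem:z_invariant}, but the conjugates of $\Gamma$ are not), so this ``WLOG'' either proves the statement for a different decomposition — which is just Proposition \ref{prop:geod_decom} again — or assumes away the whole difficulty. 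The block machinery you build afterwards does not repair this: (i) Lemma \ref{lem:girth_intersection} only produces a geodesic through \emph{some} vertex of $\bigcap_i\operatorname{st}_\Gamma(v_i)$, not through the base vertex $v_a$ of the block (in its own proof, for $n=2$ the geodesic may pass through $v_2$ rather than $v_1$), so the ``pinned'' vertex need not lie in the conjugate you want for the next application of the inductive hypothesis; (ii) the permutation $\sigma$ produced by Proposition \ref{prop:geod_decom} can reverse commuting syllables lying in \emph{different} maximal star-word blocks of the fixed decomposition, and your per-block treatment never engages with such transpositions; and (iii) the final concatenation step is asserted, not proved — to conclude the glued path is a geodesic you would need the pinned vertices to lie on a common geodesic from $u$ to $v^g$ with the segment lengths adding up to $d_{\Gamma^e}(u,v^g)$, and ``there is essentially no freedom to reconcile'' is not an argument.

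What actually closes the gap (and is what the paper does) is a local surgery that undoes the permutation one adjacent transposition at a time, rather than a global block decomposition. Start from Proposition \ref{prop:geod_decom}: a geodesic $L_0$ lies in $\Gamma\cup\bigcup_i\Gamma^{s_{\sigma(i)}\dots s_{\sigma(1)}}$ for some $\sigma$ realized by commutations (Lemma \ref{lem:syl_express}). If $\sigma(j)>\sigma(j+1)$, then $s_{\sigma(j)}$ and $s_{\sigma(j+1)}$ commute; pick the points $a_0, b_0$ where $L_0$ meets the two corresponding translated stars, conjugate back so the configuration is based at $\Gamma$, and apply Lemma \ref{lem:girth_intersection} to the two-syllable word $s_{\sigma(j+1)}s_{\sigma(j)}$ to find $x_0\in\operatorname{st}_\Gamma(v_{\sigma(j)})\cap\operatorname{st}_\Gamma(v_{\sigma(j+1)})$ with $d(a_0^h,b_0^h)=d(a_0^h,x_0)+d(x_0,b_0^h)$; replacing the $[a_0,b_0]$-segment of $L_0$ by the corresponding same-length path through the swapped conjugates keeps it a geodesic with the same endpoints and moves it into the union of conjugates for the decomposition one transposition closer to $s_n\dots s_1$. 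Iterating until $\sigma$ is the identity gives the statement; the same-endpoints, same-length replacement is precisely what makes geodesicity automatic, which is the consistency your gluing step lacks.
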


\begin{proof}
By Proposition \ref{prop:geod_decom}, for some syllable decomposition $g = s_n \dots s_1$, the union $\Gamma \cup (\bigcup_{i = 1}^n \Gamma^{s_i \dots s_1})$ contains a geodesic $\gamma$ joining $u$ and $v^g$, moreover, $\gamma$ is a concatenation of geodesic segments $L_0, \dots, L_n$ satisfying $L_0 \subset \Gamma$ and $L_i \subset \Gamma^{s_i \dots s_1}$.
Lemma \ref{lem:syl_express} guarantees it is enough to prove the statement: for a commuting pair $s_{j+1} s_{j}$, the union $\Gamma \cup (\bigcup_{i \neq j}\Gamma^{s_i \dots s_1}) \cup \Gamma^{s_{j+1} (s_{j - 1} \dots s_1)}$ contains a geodesic joining $u$ and $v^g$.
As a matter of fact, our goal is to show $\Gamma \cup (\bigcup_{i \neq j} \Gamma^{s_i \dots s_1})$ contains a geodesic joining $u$ and $v^g$.

If $v_j$ and $v_{j+1}$ are the vertices supporting $s_j$ and $s_{j+1}$, respectively, let us write \begin{center} $z := v_j^{s_j \dots s_1}$ \quad and \quad $z' = v_{j+1}^{s_{j+1} \dots s_1}$. \end{center}
Note $z$ and $z'$ are adjacent so that $\operatorname{st}_{\Gamma^e}(z) \cup \operatorname{st}_{\Gamma^e}(z')$, denoted by $T$, is of diameter at most $3$.
Since the girth of $\Gamma^e$ is larger than $4$, any subpath of $T$ does not induce a cycle, that is, $T$ is an induced subtree.
In addition, $z$ and $z'$ are the only internal vertices of $T$, in other words, the leaves of $T$ are all vertices but $z$ and $z'$.

By Lemma \ref{lem:sylsep}, $T$ contains an endpoint of $\gamma$ or separates $u$ and $v^g$.
Let $x$ be the vertex closest to $u$ in $\gamma \cap T$, and let $y$ be the vertex closest to $v^g$ in $\gamma \cap T$.
Because $\gamma \subset \Gamma \cup (\bigcup_{i = 1}^n \Gamma^{s_i \dots s_1})$, we have $x \in \operatorname{st}_{\Gamma}(v_j)^{s_{j-1} \dots s_1} \subset \Gamma^{s_{j - 1} \dots s_1}$ and $y \in \operatorname{st}_{\Gamma}(v_{j+1})^{s_{j+1} \dots s_1} \subset \Gamma^{s_{j+1} \dots s_1}$.

Let $\delta$ denote the segment of $\gamma$ joining $x$ and $y$.
Note $\gamma \cap \Gamma^{s_j \dots s_1}$ is a concatenation of $L_j$ and some subsegments of $L_{j-1}$ and $L_{j+1}$.
Then $\gamma \cap \Gamma^{s_j \dots s_1}$ is a (connected) segment whose endpoints lie on $T$.
Because the length of $\delta$ is maximal among the lengths of segments connecting two vertices of $\gamma \cap T$, we have $\gamma \cap \Gamma^{s_j \dots s_1} \subseteq \delta$.

If $\delta$ lies on $T$, then $\delta$ belongs to $\Gamma^{s_{j - 1} \dots s_1} \cup \Gamma^{s_{j+1} \dots s_1}$ so that $\Gamma \cup (\bigcup_{i \neq j} \Gamma^{s_i \dots s_1})$ contains $\gamma$.
If $\delta$ does not lie on $T$, the girth of $\Gamma^e$ forces the distance of $x$ and $y$ is three and there exists another segment $\delta'$ that joins $x$ and $y$ and lies on $T$.
Let us substitute $\delta$ to $\delta'$ from $\gamma$.
Then we obtain a same result that $\gamma$ lies on $\Gamma \cup (\bigcup_{i \neq j} \Gamma^{s_i \dots s_1})$.
Therefore, the statement holds.
\end{proof}

From the above, we obtain the following lemma.

\begin{lem} \label{lem:finite_convex}
If $g = s_n \dots s_1$ is a syllable decomposition, then $\Omega := \Gamma \cup (\bigcup_{j = 1}^n \Gamma^{s_j \dots s_1})$ is weakly convex.
That is, for all vertices $x, y \in \Omega$, some geodesic joining $x$ and $y$ is contained in $\Omega$.
\end{lem}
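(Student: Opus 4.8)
The plan is to reduce the statement to Proposition~\ref{prop:shallow_isom_embed} by a translation argument, exploiting that $A(\Gamma)$ acts on $\Gamma^e$ by isometries. Fix two vertices $x,y\in\Omega$. Then $x\in\Gamma^{w}$ and $y\in\Gamma^{w'}$, where $w=s_i\dots s_1$ and $w'=s_j\dots s_1$ for some $i,j\in\{0,1,\dots,n\}$, with the convention that $w=1$ (so $\Gamma^{w}=\Gamma$) when $i=0$, and similarly for $w'$ when $j=0$. Interchanging $x$ and $y$ if necessary, assume $i\le j$. Put $x':=x^{w^{-1}}$ and $y':=y^{w^{-1}}$; then $x'\in\Gamma$ and $y'\in\Gamma^{\,w'w^{-1}}=\Gamma^{\,s_j\dots s_{i+1}}$, so $x'=u$ and $y'=v^{h}$ for some vertices $u,v\in V(\Gamma)$, where $h:=s_j\dots s_{i+1}$ (understood to equal $1$ when $i=j$).

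Before invoking Proposition~\ref{prop:shallow_isom_embed}, I would record that $h=s_j\dots s_{i+1}$ is a genuine syllable decomposition of the element it represents: if $h$ could be written as a product of fewer than $j-i$ syllables, substituting that expression for $h$ into $g=s_n\dots s_{j+1}\,h\,s_i\dots s_1$ would exhibit $g$ as a product of fewer than $n=\|g\|_{\mathrm{syl}}$ syllables, contradicting the hypothesis that $s_n\dots s_1$ is a syllable decomposition. Now Proposition~\ref{prop:shallow_isom_embed}, applied to the syllable decomposition $h=s_j\dots s_{i+1}$ and the vertices $u,v$, produces a geodesic $\gamma'$ of $\Gamma^e$ joining $u$ and $v^{h}$ with
$$\gamma'\ \subseteq\ \Gamma\ \cup\ \bigcup_{k=i+1}^{j}\Gamma^{\,s_k\dots s_{i+1}}.$$
(When $i=j$ one has $h=1$ and this is just a geodesic inside $\Gamma$, which is a geodesic of $\Gamma^e$ by Lemma~\ref{lem:suc}\eqref{en1:geo}; this degenerate case is in fact already covered by the proposition.)

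Finally I would push $\gamma'$ forward by $w$. Since $z\mapsto z^{w}$ is an isometry of $\Gamma^e$, the image $\gamma:=(\gamma')^{w}$ is again a geodesic, and it joins $u^{w}=x$ to $(v^{h})^{w}=v^{\,hw}=v^{\,s_j\dots s_1}=y$. Moreover
$$\gamma\ \subseteq\ \Gamma^{w}\ \cup\ \bigcup_{k=i+1}^{j}\Gamma^{\,s_k\dots s_{i+1}w}\ =\ \Gamma^{\,s_i\dots s_1}\ \cup\ \bigcup_{k=i+1}^{j}\Gamma^{\,s_k\dots s_1}\ \subseteq\ \Omega,$$
where the first summand is understood to be $\Gamma$ when $i=0$. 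Since $x,y\in\Omega$ were arbitrary vertices, $\Omega$ is weakly convex.

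The proof is short once Proposition~\ref{prop:shallow_isom_embed} is available; the points that need care are (i) checking that the subword $s_j\dots s_{i+1}$ is again a syllable decomposition, so that the proposition applies, and (ii) the conjugation bookkeeping in the last step --- in particular confirming that translating by $w$ recovers the original endpoints $x,y$ and that every translated piece $\Gamma^{\,s_k\dots s_{i+1}w}$, together with $\Gamma^{w}$, is among the subgraphs listed in the definition of $\Omega$.
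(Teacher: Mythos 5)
Your argument is correct and is essentially the proof the paper intends: the lemma is stated as an immediate consequence of Proposition~\ref{prop:shallow_isom_embed}, obtained exactly by translating both vertices by $(s_i\dots s_1)^{-1}$, applying the proposition to the subword $s_j\dots s_{i+1}$, and translating back. Your two added checks --- that the subword is itself a syllable decomposition (else $\|g\|_{\rm syl}<n$) and the conjugation bookkeeping $\Gamma^{s_k\dots s_{i+1}w}=\Gamma^{s_k\dots s_1}\subseteq\Omega$ --- are precisely the details the paper leaves implicit.
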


Recall the \emph{Euclidean division} of an integer $m$ by $n$ is the unique equation $m = nq + r$ with $q \in \mathbb{Z}$ and $0 \leq r < n$.
For each $m \in \mathbb{Z}$ with the Euclidean division $m = nq + r$, we write $$g(m) := s_r \dots s_1 g^q.$$
Let $\mathcal{A}_{s_n \dots s_1}$ denote the union of all $\Gamma^{g(m)}$ through all integers $m$, that is, $$\mathcal{A}_{s_n \dots s_1} := \bigcup_{m \in \mathbb{Z}} \Gamma^{g(m)}.$$
Note an axial subgraph of $g$, defined in Section \ref{sec:finiteness}, is a $\langle g \rangle$-invariant weakly convex locally finite subgraph such that the induced action of $\langle g \rangle$ is cocompact.

\begin{prop} \label{prop:conv}
Let $g = s_n \dots s_1$ be a syllable decomposition of a cyclically syllable-reduced loxodromic $g \in A(\Gamma)$.
Then $\mathcal{A}_{s_n \dots s_1}$ is an axial subgraph of $g$.
\end{prop}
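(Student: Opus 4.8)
The plan is to check the four defining properties of an axial subgraph for $\mathcal{A} := \mathcal{A}_{s_n \dots s_1} = \bigcup_{m \in \mathbb{Z}} \Gamma^{g(m)}$: that it is $\langle g \rangle$-invariant, that the induced $\langle g \rangle$-action is cocompact, that it is locally finite, and that it is weakly convex. Here $g(m) = s_r \dots s_0 g^q$ for the Euclidean division $m = nq + r$, $0 \le r < n$, and $s_0 = 1$.

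Invariance and cocompactness are immediate from index bookkeeping. First I would observe that $g(m)g = g(m+n)$ directly from the Euclidean division, so that $(v^{g(m)})^g = v^{g(m)g} = v^{g(m+n)}$; since the action is simplicial, $g$ carries $\Gamma^{g(m)}$ isomorphically onto $\Gamma^{g(m+n)}$, hence $g$ and $g^{-1}$ permute the pieces $\{\Gamma^{g(m)}\}_{m\in\mathbb{Z}}$ and $\mathcal{A}$ is $\langle g\rangle$-invariant. For cocompactness, the finite subgraph $K := \bigcup_{m=0}^{n-1}\Gamma^{g(m)}$ is a finite union of copies of $\Gamma$, hence compact, and its $\langle g\rangle$-orbit is $\bigcup_{j\in\mathbb{Z}}\bigcup_{m=0}^{n-1}\Gamma^{g(m+jn)} = \mathcal{A}$. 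For local finiteness, the cleanest route is to note that $\mathcal{A} \subseteq \mathcal{T}_g$ and to invoke Lemma \ref{lem:local_finite_T}: indeed, for each $m\ge 1$ the graph $\Gamma^{g(k)}$ with $0\le k\le 2mn$ occurs as a partial product in the syllable decomposition $(s_n\dots s_1)^{2m}$ of $g^{2m}$, so $\Gamma^{g(k)}\subseteq \Lambda_{g^{2m}}$ and therefore $\Gamma^{g(k-mn)}\subseteq \Lambda_{g^{2m}}^{g^{-m}}$ for all $-mn\le k-mn\le mn$; taking the union over $m$ gives $\mathcal{A}\subseteq \mathcal{T}_g$. (Alternatively one can argue directly: a vertex $u^{g(m)}$ lies in $\Gamma^{g(m')}$ only when $g(m')g(m)^{-1}$ conjugates the vertex $u$ to a vertex, which forces $g(m')g(m)^{-1}$ to be a star word, and Proposition \ref{prop:num_elliptic} then bounds $|m-m'|$.)

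The substantive point is weak convexity, and this is where $\operatorname{girth}(\Gamma)\ge 6$ enters, through Proposition \ref{prop:shallow_isom_embed}, i.e. Lemma \ref{lem:finite_convex}. Given $x\in\Gamma^{g(m_1)}$ and $y\in\Gamma^{g(m_2)}$ with $m_1\le m_2$, write $x=u^{g(m_1)}$ and $y=v^{g(m_2)}$ with $u,v\in V(\Gamma)$, and set $w:=g(m_2)g(m_1)^{-1}$. The key computation is that, since $g$ is cyclically syllable-reduced and loxodromic (hence cyclically reduced with support not contained in any star, so not a star word), Lemma \ref{lem:cyclic_syllable} makes $(s_n\dots s_1)^q$ a syllable decomposition of each positive power of $g$; consequently the expression for $w$ obtained after the obvious cancellation, namely $w = (s_{r_2}\dots s_1)(s_n\dots s_1)^{q_2-q_1-1}(s_n\dots s_{r_1+1})$ when $q_2>q_1$ (and the analogous shorter words otherwise), is a syllable decomposition of $w$ with exactly $m_2-m_1$ syllables whose $i$-th partial product equals $g(m_1+i)g(m_1)^{-1}$. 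Applying Lemma \ref{lem:finite_convex} to this syllable decomposition produces a geodesic from $u=x^{g(m_1)^{-1}}$ to $v^w=y^{g(m_1)^{-1}}$ contained in $\Gamma\cup\bigcup_{i=1}^{m_2-m_1}\Gamma^{g(m_1+i)g(m_1)^{-1}}$; conjugating by $g(m_1)$, which acts on $\Gamma^e$ as a simplicial isometry, yields a geodesic from $x$ to $y$ contained in $\bigcup_{i=m_1}^{m_2}\Gamma^{g(i)}\subseteq\mathcal{A}$. As $x,y$ were arbitrary vertices of $\mathcal{A}$, it is weakly convex, and combined with the previous paragraph $\mathcal{A}$ is an axial subgraph of $g$.

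The main obstacle I anticipate is purely bookkeeping in the weak-convexity step: verifying that $w=g(m_2)g(m_1)^{-1}$ is genuinely syllable-reduced of length $m_2-m_1$ (so that Lemma \ref{lem:finite_convex} applies with the correct index set, using cyclic syllable-reducedness via Lemma \ref{lem:cyclic_syllable}), and keeping track of the conjugation by $g(m_1)$ so that the transported geodesic lands inside the intended union of pieces of $\mathcal{A}$. No new hyperbolic-geometry input is needed, since the girth hypothesis has already been absorbed into Proposition \ref{prop:shallow_isom_embed}/Lemma \ref{lem:finite_convex}.
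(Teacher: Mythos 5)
Your proposal is correct and uses the same ingredients as the paper's proof: invariance and cocompactness come from the identity $g(m)g = g(m+n)$ and the compact fundamental piece $\bigcup_{m=0}^{n-1}\Gamma^{g(m)}$, and weak convexity comes from Lemma \ref{lem:finite_convex} together with the fact (Lemma \ref{lem:cyclic_syllable}) that powers of a cyclically syllable-reduced loxodromic inherit the concatenated syllable decomposition. The only packaging difference is in the convexity step: the paper exhausts $\mathcal{A}_{s_n\dots s_1}$ by the ascending chain of translates $\Omega_{2\ell}^{g^{-\ell}}$ with $\Omega_{2\ell}=\bigcup_{m=0}^{2n\ell}\Gamma^{g(m)}$, each weakly convex by Lemma \ref{lem:finite_convex} applied to $g^{2\ell}$, whereas you fix two vertices, conjugate by $g(m_1)^{-1}$, and apply the same lemma to the syllable-reduced subword $g(m_2)g(m_1)^{-1}$ --- an equivalent, pointwise version of the same argument, and your bookkeeping that this subword is genuinely a syllable decomposition of length $m_2-m_1$ parallels the paper's use of the same fact in Lemma \ref{lem:link_separating}. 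One point in your favor: you also verify local finiteness, via $\mathcal{A}_{s_n\dots s_1}\subseteq\mathcal{T}_g$ and Lemma \ref{lem:local_finite_T}, which the Section \ref{sec:finiteness} definition of an axial subgraph requires but the paper's proof of this proposition omits (its restatement of the definition in this section silently drops that condition).
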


\begin{proof}
For each $m, \ell \in \mathbb{Z}$ with $m = nq + r$, we have $$g(m)g^{-\ell} = s_r \dots s_1 g^q g^{-\ell} = g(n(q-\ell)+r) = g(m - n\ell).$$
Then $g$ preserves $\mathcal{A}_{s_n \dots s_1}$ because $\mathcal{A}_{s_n \dots s_1}^g = \bigcup_{m \in \mathbb{Z}}\Gamma^{g(m)g} = \bigcup_{m \in \mathbb{Z}}\Gamma^{g(m + n)} = \mathcal{A}_{s_n \dots s_1}$.
So $\mathcal{A}_{s_n \dots s_1}$ is $\langle g \rangle$-invariant.

For a vertex $x \in \mathcal{A}_{s_n \dots s_1}$ with $x = v^w$, if $m_1 < m_2$ and $x \in \Gamma^{g(m_1)} \cap \Gamma^{g(m_2)}$, then $g(m_2) g(m_1)^{-1}$ is a star subword of $g$ that fixes $v$.
Since $g$ is loxodromic, $m_2 - m_1$ is less than $n$.
Then the cardinality of $\{m \mid x \in \Gamma^{g(m)}\}$ is also less than $n$.
So the number of edges in $\mathcal{A}_{s_n \dots s_1}$, which intersect $x$, is less than $n \cdot \lvert \operatorname{st}_\Gamma(v) \rvert$.
So $\mathcal{A}_{s_n \dots s_1}$ is locally finite.

Consider the following equation: 
\begin{align*}
\mathcal{A}_{s_n \dots s_1} 
&= \bigcup_{m \in \mathbb{Z}}\Gamma^{g(m)} \\
&= \bigcup_{\ell \in \mathbb{Z}} (\Gamma^{g(1 + n\ell)} \cup \dots \cup \Gamma^{g(n + n\ell)}) \\
&= \bigcup_{\ell \in \mathbb{Z}} (\Gamma^{g(1)} \cup \dots \cup \Gamma^{g(n)})^{g^\ell}
\end{align*}
This implies the $\langle g \rangle$-orbit of $\Gamma^{g(1)} \cup \dots \Gamma^{g(n)}$ covers $\mathcal{A}_{s_n \dots s_1}$.
So the action of $\langle g \rangle$ on $\mathcal{A}_{s_n \dots s_1}$ is cocompact.

Since $g$ is cyclically syllable-reduced, $g^\ell = (s_n \dots s_1)^\ell$ is a syllable decomposition for every $\ell \in \mathbb{Z}$.
For each $m \geq 0$, let $\Omega_\ell := \bigcup_{m = 0}^{n\ell} \Gamma^{g(m)}$.
Then we have $$\Omega_{2\ell}^{g^{-\ell}} = \bigcup_{m = 0}^{2n\ell} \Gamma^{g(m)g^{-\ell}} = \bigcup_{m = 0}^{2n\ell} \Gamma^{g(m - n\ell)} = \bigcup_{m = -n\ell}^{n\ell}\Gamma^{m}$$ because $g(m)g^{-\ell} = s_r \dots s_1 g^q g^{-\ell} = g(n(q-\ell)+r) = g(m - n\ell)$.
So we obtain the equation $\mathcal{A}_{s_n \dots s_1} = \bigcup_{\ell \geq 1}\Omega_{2\ell}^{g^{-\ell}}$.
 
By Lemma \ref{lem:finite_convex}, each $\Omega_{2\ell}$ is weakly convex, so is $\Omega_{2\ell}^{g^{-\ell}}$.
Note the sequence $\Lambda_{2\ell}^{g^{-\ell}}$ is an ascending chain with respect to the set inclusion.
Therefore, $\mathcal{A}_{s_n \dots s_1}$ is weakly convex.
\end{proof}

For each $v \in \Gamma$, the \emph{link} of $v$, denoted by $\operatorname{lk}_\Gamma(v)$ is the subgraph induced from $\operatorname{st}_\Gamma(v) \setminus \{ v \}$.
The next lemma is the essence of this section.
The width of the axial subgraph $\mathcal{A}_{s_n \dots s_1}$ is bounded by the size of a link.

\begin{lem} \label{lem:link_separating}
If $s_1$ is a power of a vertex $v_1$, then $\operatorname{lk}_\Gamma(v_1)$ separates the ends of $\mathcal{A}_{s_n \dots s_1}$.
\end{lem}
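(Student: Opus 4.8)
The plan is to show that $\operatorname{lk}_\Gamma(v_1)$, viewed as a set of vertices of $\mathcal{A}_{s_n \dots s_1}$, is an end-separating set. Since $\mathcal{A}_{s_n\dots s_1}=\bigcup_{m\in\mathbb{Z}}\Gamma^{g(m)}$ and $g$ acts on it cocompactly by translating $\Gamma^{g(m)}$ to $\Gamma^{g(m+n)}$, the two ends of $\mathcal{A}_{s_n\dots s_1}$ are represented by the "large positive $m$" and "large negative $m$" directions. Concretely, I would fix the vertex $v_1\in\Gamma=\Gamma^{g(0)}$, note $v_1=z_1$ in the notation of Lemma \ref{lem:sylsep} (since $s_1$ is a power of $v_1$, $z_1=v_1^{s_1}=v_1$), and first observe that $\operatorname{st}_{\Gamma^e}(v_1)$ separates $\Gamma-\operatorname{st}_{\Gamma^e}(v_1)$ from $\Gamma^g-\operatorname{st}_{\Gamma^e}(v_1)$ in $\Gamma^e$ by Lemma \ref{lem:sylsep} applied with $i=1$. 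Because $\Gamma$ has no triangle (girth $\geq 6$), $\operatorname{st}_{\Gamma^e}(v_1)$ restricted to $\mathcal{A}_{s_n\dots s_1}$ can only meet the pieces $\Gamma^{g(m)}$ for $m$ near $0$, and within $\Gamma\cup\Gamma^g$ the only vertices of $\operatorname{st}_{\Gamma^e}(v_1)$ are $v_1$ itself and the vertices of $\operatorname{lk}_\Gamma(v_1)$; the claim is that deleting just $\operatorname{lk}_\Gamma(v_1)$ already disconnects the two ends.

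The key step is to verify that any edge path $P$ in $\mathcal{A}_{s_n\dots s_1}$ running from a vertex of $\Gamma^{g(m)}$ with $m \ll 0$ to a vertex of $\Gamma^{g(m')}$ with $m' \gg 0$ must pass through $\operatorname{lk}_\Gamma(v_1)$. By weak convexity of $\mathcal{A}_{s_n\dots s_1}$ (Proposition \ref{prop:conv}) and of each finite piece $\Omega_\ell$ (Lemma \ref{lem:finite_convex}), it suffices to work inside a single $\Omega_{2\ell}^{g^{-\ell}}=\bigcup_{m=-n\ell}^{n\ell}\Gamma^{g(m)}$ for $\ell$ large, and there I would use Lemma \ref{lem:sylsep} applied to the syllable decomposition $g^{2\ell}=(s_n\dots s_1)^{2\ell}$ and the middle index corresponding to the copy of $s_1$ sitting at position $n\ell+1$ in $g^{2\ell}$: the associated separating vertex is exactly $v_1^{g^{-\ell}}\cdot g^{\ell}=v_1$ (after conjugating the whole picture by $g^{-\ell}$ to center it), and $\operatorname{st}_{\Gamma^e}(v_1)\cap\mathcal{A}_{s_n\dots s_1}$ is contained in $\{v_1\}\cup\operatorname{lk}_\Gamma(v_1)\cup(\text{a bounded neighborhood that touches only }\Gamma^{g(m)}\text{ with }|m|\text{ small})$. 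The point of the girth hypothesis — via Lemma \ref{lem:ayclic_neighbor}, which says the closed $2$-neighborhood of any vertex is a tree — is that $\operatorname{st}_{\Gamma^e}(v_1)$ meets $\mathcal{A}_{s_n\dots s_1}$ in a tree-like (in fact star-like) configuration, so that a path avoiding $\operatorname{lk}_\Gamma(v_1)$ cannot "go around" $v_1$ through $\operatorname{st}_{\Gamma^e}(v_1)$: it would have to pass through $v_1$, but then I can still argue any such path enters and leaves $v_1$ through $\operatorname{lk}_\Gamma(v_1)$.

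More carefully, I would argue that $v_1$ has the property that every edge of $\mathcal{A}_{s_n\dots s_1}$ incident to $v_1$ goes to a vertex of $\operatorname{lk}_\Gamma(v_1)$ (an edge $\{v_1, x\}$ means $[v_1, x]=1$, so $x\in\operatorname{st}_{\Gamma^e}(v_1)$; if $x\in\mathcal{A}_{s_n\dots s_1}$ lies in $\Gamma^{g(m)}$, one checks using the no-triangle condition and the structure of the doublings that $x$ must in fact lie in $\operatorname{lk}_\Gamma(v_1)$, not in a far-away translate). Granting this, deleting $\operatorname{lk}_\Gamma(v_1)$ isolates $v_1$, and then by the separation statement from Lemma \ref{lem:sylsep} the remainder splits into the $\Gamma-\operatorname{st}$ side and the $\Gamma^g-\operatorname{st}$ side — which, propagated along the cocompact $\langle g\rangle$-action, are precisely the two ends.

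The main obstacle I anticipate is the bookkeeping in the second paragraph: making precise that when one conjugates the global separation statement of Lemma \ref{lem:sylsep} (a statement about $\Gamma$ versus $\Gamma^g$ inside all of $\Gamma^e$) down to the axial subgraph $\mathcal{A}_{s_n\dots s_1}$, the separating set shrinks from $\operatorname{st}_{\Gamma^e}(v_1)$ to just $\operatorname{lk}_\Gamma(v_1)$. This relies essentially on the girth hypothesis forbidding short cycles through $v_1$, so that no edge path in $\mathcal{A}_{s_n\dots s_1}$ can shortcut around $v_1$ using a non-link vertex of $\operatorname{st}_{\Gamma^e}(v_1)$; I would isolate this as the crux and prove it by the same kind of cycle-length contradiction used in Lemma \ref{lem:ayclic_neighbor} and Lemma \ref{lem:girth_intersection}.
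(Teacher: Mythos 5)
Your proposal does not follow the paper's route, and its crux step is false. The paper does not argue via Lemma~\ref{lem:sylsep} or the tree-neighborhood Lemma~\ref{lem:ayclic_neighbor} at all here: it proves, by a purely word-combinatorial support computation, that $(\bigcup_{\ell\le 0}\Gamma^{g(\ell)})\cap(\bigcup_{t>0}\Gamma^{g(t)})\subseteq\operatorname{st}_\Gamma(v_1)$, the inputs being that $g$ is cyclically syllable-reduced (so $g(t)g(\ell)^{-1}$ is syllable-reduced and $\operatorname{supp}(g(t)g(\ell)^{-1})=\operatorname{supp}(g(t))\cup\operatorname{supp}(g(\ell))$) and that an element fixing a vertex of $\Gamma^e$ has support in the star of that vertex; since every edge of $\mathcal{A}_{s_n\dots s_1}$ lies in a single copy $\Gamma^{g(m)}$, this already yields separation of the ends by the finite set $\operatorname{st}_\Gamma(v_1)$, and only the final passage from the star to the link remains. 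Your plan instead starts from separation by $\operatorname{st}_{\Gamma^e}(v_1)$ (an infinite set, so by itself useless for the width bound) and tries to shrink it to $\operatorname{lk}_\Gamma(v_1)$ via the claim that every edge of $\mathcal{A}_{s_n\dots s_1}$ incident to $v_1$ ends in $\operatorname{lk}_\Gamma(v_1)$, so that deleting the link isolates $v_1$.

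That claim is not true. Whenever a word $g(m)$ (for $m$ positive or negative) has support contained in $\operatorname{st}_\Gamma(v_1)$ but involves a link vertex, one has $v_1=v_1^{g(m)}\in\Gamma^{g(m)}$, and the neighbors of $v_1$ inside that copy are $(\operatorname{lk}_\Gamma(v_1))^{g(m)}$, which in general are not vertices of $\Gamma$ at all. Concretely, let $\Gamma=C_6$ with consecutive vertices $v_1,\dots,v_6$ and $g=s_4s_3s_2s_1$ with $s_1=v_1$, $s_2=v_2$, $s_3=v_4$, $s_4=v_6$; this $g$ is cyclically syllable-reduced and loxodromic, and the girth is $6$. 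Then $g(2)=v_2v_1$ and $g(-1)=v_6^{-1}$ have support in $\operatorname{st}_\Gamma(v_1)$, so $\mathcal{A}_{s_4s_3s_2s_1}$ contains the edges $\{v_1, v_6^{v_2}\}\subset\Gamma^{g(2)}$ and $\{v_1, v_2^{v_6^{-1}}\}\subset\Gamma^{g(-1)}$, and neither $v_6^{v_2}$ nor $v_2^{v_6^{-1}}$ lies in $\operatorname{lk}_\Gamma(v_1)=\{v_2,v_6\}$. So deleting the link does not isolate $v_1$, which is adjacent to vertices on both the positive and the negative side; a girth argument about short cycles through $v_1$ cannot exclude this, because these neighbors live in translated copies rather than in $\Gamma$, and in this example one can even continue from $v_6^{v_2}$ and $v_2^{v_6^{-1}}$ toward the two ends without meeting $\{v_2,v_6\}$. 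This star-versus-link reduction is exactly the delicate point of the lemma (the paper's own proof devotes its displayed computation to the star and treats the link step very briefly), so your instinct to isolate it as the crux was right, but the proposed justification fails and a correct argument must confront this configuration directly rather than assume all $\mathcal{A}_{s_n\dots s_1}$-neighbors of $v_1$ are link vertices.
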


\begin{proof}
We claim $\Gamma^{g(\ell)} \cap \Gamma^{g(t)} \subseteq \operatorname{st}_\Gamma(v_1)$ for all $\ell \leq 0$ and $t > 0$.
Because $g$ is cyclically syllable-reduced loxodromic, if $t - \ell \geq n$, then $\Gamma^{g(\ell)}$ and $\Gamma^{g(t)}$ are disjoint.
Assume $t - \ell < n$.
Because $g(t)g(\ell)^{-1} = (s_t \dots s_1) (s_n \dots s_{n + 1 + \ell})$ is a syllable decomposition and $t > 0$, one has $v_1 \in \operatorname{supp}(g(t)g(\ell)^{-1})$.

In the meanwhile, for any $x \in \Gamma^{g(\ell)} \cap \Gamma^{g(t)}$, if $u$ denotes the vertex $x^{g(\ell)^{-1}} \in \Gamma$, then $u$ commutes with $g(t)g(\ell)^{-1}$.
Then $v_1 \in \operatorname{supp}(g(t)g(\ell)^{-1}) \subseteq \operatorname{st}_\Gamma(u)$, moreover, $u$ commutes with $g(\ell)$ by Centralizer theorem.
So $x = u^{g(\ell)} = u^{g(t)} = u \in \operatorname{st}_\Gamma(v_1)$.
Therefore, we conclude $\Gamma^{g(\ell)} \cap \Gamma^{g(t)} \subseteq \operatorname{st}_\Gamma(v_1)$, so the claim holds.

Since each of $\bigcup_{\ell \leq 0} (\Gamma^{g(\ell)} - \operatorname{st}_\Gamma(v_1))$ and $\bigcup_{t > 0} (\Gamma^{g(t)} - \operatorname{st}_\Gamma(v_1))$ contains an unbounded component, $\operatorname{st}_\Gamma(v_1)$ is an end-separating subgraph of $\mathcal{A}_{s_n \dots s_1}$.
Note $v_1$ is an isolated point of $\mathcal{A}_{s_n \dots s_1} \setminus \operatorname{lk}_\Gamma(v_1)$.
So the unbounded components of $\mathcal{A}_{s_n \dots s_1} \setminus \operatorname{lk}_\Gamma(v_1)$ are identical to the unbounded components of $\mathcal{A}_{s_n \dots s_1} \setminus \operatorname{st}_\Gamma(v_1)$.
Therefore, $\operatorname{lk}_\Gamma(v_1)$ separates the ends of $\mathcal{A}_{s_n \dots s_1}$.
\end{proof}

It is easy to see that the $\kappa$-finiteness property implies the $\kappa'$-finiteness property for all $\kappa \leq \kappa'$ by definition of the finiteness property given in Section \ref{sec:finiteness}.
Now we are ready to prove our main result of the section.

\begin{thm} \label{thm:RAAG_finiteness}
For a finite connected simplicial graph $\Gamma$ of girth at least $6$, the action of $A(\Gamma)$ on $\Gamma^e$ satisfies the $\kappa$-finiteness property for some positive integer $\kappa = \kappa(\Gamma)$.
Furthermore, the effective value for $\kappa$ is bounded above by the maximum degree of $\Gamma$.
\end{thm}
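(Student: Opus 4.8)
The plan is to assemble the theorem directly from Proposition~\ref{prop:conv} and Lemma~\ref{lem:link_separating}: together these already produce, for a well-chosen conjugate of a loxodromic, an axial subgraph whose width is controlled by the degree of a single vertex of $\Gamma$. So the only work beyond invoking those results is reducing to a cyclically syllable-reduced representative and transporting the construction along a conjugation.

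First I would fix a loxodromic $g \in A(\Gamma)$ and pass to a conjugate $g_0 = h^{-1} g h$ which is cyclically syllable-reduced; such a conjugate always exists, being loxodromic is conjugacy-invariant, and $g_0$ is not a star word because star words have support in a star, hence in a join, hence are elliptic by Proposition~\ref{prop:ellip}. I would then choose a syllable decomposition $g_0 = s_n \dots s_1$ and let $v_1$ be the vertex with $s_1$ a nonzero power of $v_1$.

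Next I would apply Proposition~\ref{prop:conv} to conclude that $\mathcal{A}_{s_n \dots s_1}$ is an axial subgraph of $g_0$. Since $g(0)$ is the identity, $\Gamma = \Gamma^{g(0)}$ is a subgraph of $\mathcal{A}_{s_n \dots s_1}$, so $\operatorname{lk}_\Gamma(v_1)$ is a set of vertices of $\mathcal{A}_{s_n \dots s_1}$; by Lemma~\ref{lem:link_separating} it separates the ends of $\mathcal{A}_{s_n \dots s_1}$, hence is an end-separating set. Therefore the width of $\mathcal{A}_{s_n \dots s_1}$ is at most $\lvert V(\operatorname{lk}_\Gamma(v_1)) \rvert = \deg_\Gamma(v_1)$, which is at most the maximum degree $N$ of $\Gamma$.

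Finally I would transport this to $g$ itself. The map $x \mapsto x^h$ is a simplicial isometry of $\Gamma^e$ carrying $\mathcal{A}_{s_n \dots s_1}$ to a subgraph $\mathcal{A}_{s_n \dots s_1}^h$; it preserves weak convexity and local finiteness, it intertwines the $\langle g_0 \rangle$-action on $\mathcal{A}_{s_n \dots s_1}$ with the $\langle g \rangle$-action on $\mathcal{A}_{s_n \dots s_1}^h$ (so the latter is still cocompact), and it takes end-separating sets to end-separating sets. Hence $\mathcal{A}_{s_n \dots s_1}^h$ is an axial subgraph of $g$ of width at most $N$. Since $g$ was an arbitrary loxodromic, the action has the $\kappa$-finiteness property with $\kappa = N$, proving both assertions. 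I expect the real content to lie entirely in Proposition~\ref{prop:conv} and Lemma~\ref{lem:link_separating} --- whose proofs rest on the fact that $2$-neighborhoods in $\Gamma^e$ are trees when $\Gamma$ has girth at least $6$ (Lemma~\ref{lem:ayclic_neighbor}) --- while the conjugation step is a formal check; the one place to be careful is matching the left/right conventions for the conjugation action so that $x \mapsto x^h$ really does intertwine the two cyclic actions.
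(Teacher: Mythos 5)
Your proposal is correct and takes essentially the same route as the paper: the paper likewise handles a cyclically syllable-reduced representative by combining Proposition~\ref{prop:conv} with Lemma~\ref{lem:link_separating} (the end-separating set being $\operatorname{lk}_\Gamma(v_1)$, of size at most the maximum degree) and then passes to an arbitrary loxodromic by conjugation, a step the paper leaves implicit but you spell out. The only nit is the convention you already flagged yourself: with $g_0 = h^{-1}gh$ and the right action $x \mapsto x^g$, the intertwining isometry is $x \mapsto x^{h^{-1}}$ (equivalently, take $g_0 = hgh^{-1}$); with that adjustment your argument coincides with the paper's.
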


\begin{proof}
For a cyclically syllable-reduced loxodromic, there exists an axial subgraph of width at most $\kappa$ by Lemma \ref{lem:link_separating}.
Because every loxodormic is conjugate to some cyclically syllable-reduced loxodromic, it also has an axial subgraph of width at most $\kappa$.
Therefore, the action of $A(\Gamma)$ on $\Gamma^e$ satisfies the $\kappa$-finiteness property.
\end{proof}

The following corollary is deduced from Theorem \ref{thm:RAAG_finiteness} combined with Lemma \ref{lem:refinement_Bowditch}.

\begin{cor}\label{cor:RAAG_cyclic_permutation}
Suppose the girth of $\Gamma$ is at least $6$.
If $N$ is the maximal degree of $\Gamma$, then every loxodromic of $A(\Gamma)$ permutes cyclically at most $N$ pairwise disjoint geodesics on $\Gamma^e$.
\end{cor}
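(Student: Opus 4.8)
The plan is simply to feed the conclusion of Theorem \ref{thm:RAAG_finiteness} into Lemma \ref{lem:refinement_Bowditch}. Let $g$ be an arbitrary loxodromic element of $A(\Gamma)$. Since $\Gamma$ is a finite connected simplicial graph of girth at least $6$, Theorem \ref{thm:RAAG_finiteness} guarantees that the action of $A(\Gamma)$ on $\Gamma^e$ has the $\kappa$-finiteness property for some $\kappa = \kappa(\Gamma)$ with $\kappa \le N$, where $N$ is the maximum degree of $\Gamma$. In particular $g$ possesses an axial subgraph $\mathcal{A}_g \subseteq \Gamma^e$ whose width is at most $N$.

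Next I would invoke Lemma \ref{lem:refinement_Bowditch}: applied to the axial subgraph $\mathcal{A}_g$ of width $w \le N$, it produces a collection $\mathcal{L}$ of pairwise disjoint geodesics, all lying on $\mathcal{A}_g$, which $g$ cyclically permutes, and with $\lvert \mathcal{L} \rvert \le w \le N$. Here one should note that although Lemma \ref{lem:refinement_Bowditch} is phrased for an axial subgraph of a fixed width $\kappa$, an axial subgraph of width $w$ is precisely such an object with $\kappa = w$, and the conclusion of the lemma only weakens as $w$ grows; also, as remarked immediately after that lemma, the statement transfers verbatim from the left-action setting to the right-action setting, which is the one relevant to the action of $A(\Gamma)$ on $\Gamma^e$. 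Since the geodesics of $\mathcal{L}$ sit inside $\mathcal{A}_g$, hence inside $\Gamma^e$, this exhibits $g$ as cyclically permuting at most $N$ pairwise disjoint geodesics on $\Gamma^e$, which is exactly the assertion.

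There is no genuine obstacle in this corollary; all of the content lives in Theorem \ref{thm:RAAG_finiteness} (the width bound by the maximum degree, ultimately via Lemma \ref{lem:link_separating}) and in Lemma \ref{lem:refinement_Bowditch} (the passage from a width bound to a disjoint family of cyclically permuted geodesics, which in turn rests on Lemma \ref{lem:L_is_geodesic}). The only bookkeeping step is the reduction from an arbitrary loxodromic to a cyclically syllable-reduced representative, but that reduction is already folded into the statement of Theorem \ref{thm:RAAG_finiteness}, so nothing further is required here.
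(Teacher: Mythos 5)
Your proposal is correct and is exactly the paper's argument: the corollary is obtained by feeding the width bound of Theorem \ref{thm:RAAG_finiteness} (which comes from Lemma \ref{lem:link_separating}, bounding the width by the maximum degree $N$) into Lemma \ref{lem:refinement_Bowditch}, with the left/right action issue handled as in the remark preceding that lemma. Nothing further is needed.
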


Then we can obtain Theorem \ref{thm:main1} by Theorem \ref{thm:RAAG_finiteness} and Theorem \ref{thm:rational_length}.

\begin{proof}[Proof of Theorem \ref{thm:main1}]
The action of $A(\Gamma)$ on $\Gamma^e$ satisfies the $\kappa$-finiteness property by Theorem \ref{thm:RAAG_finiteness}.
For every element $g$ of $A(\Gamma)$, the asymptotic translation length of $g$ is a fraction of denominator $\kappa!$ by Theorem \ref{thm:rational_length}.
Therefore, the statement of Theorem \ref{thm:main1} holds.
\end{proof}

\section{Examples} \label{sec:examples}

In this section, we calculate asymptotic translation lengths and their spectra in several cases using Theorem \ref{thm:rational_length}.
For a finite connected simplicial graph $\Gamma$, let $\operatorname{Spec}(A(\Gamma))$ denote the length spectrum of of $A(\Gamma)$ on $\Gamma^e$.
All other symbols and notations we use in this section are adopted from the front of Section \ref{sec:general}.

%
%
\subsection{Trees} \label{subsec:treegraphs}

By \cite[Lemma 3.5(5), Lemma 3.9]{kim2013embedability}, the connectivity and acyclicity of a graph are retained in its extension graph, respectively.
It deduces that an extension graph of a tree is also a tree.
In the case of trees, we do not need to apply the $\kappa$-finiteness property; every loxodromic has a unique geodesic axis with an integer asymptotic translation length by Bass--Serre theory \cite[Proposition 25]{MR607504}.
On the other hand, because a tree is a bi-partite graph, every closed path on a tree has even length.
From the above, we derive the following proposition.

\begin{prop} \label{thm:translation-tree} 
For a finite simplicial tree $\Gamma$, one has $\operatorname{Spec}(A(\Gamma)) \subseteq 2\mathbb{Z}$.
\end{prop}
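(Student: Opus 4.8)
The plan is to combine the structural facts recalled just before the statement with a parity argument coming from the fact that trees are bipartite. The key inputs are: (1) an extension graph $\Gamma^e$ of a tree $\Gamma$ is again a tree; and (2) for a loxodromic $g$ acting on a tree, Bass--Serre theory provides a genuine geodesic axis $L$ preserved by $g$, along which $g$ translates by exactly $\tau(g) \in \mathbb{Z}_{>0}$. Thus the only thing left to establish is that this integer is even.

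First I would observe that any tree is bipartite: fixing a basepoint $p$, the vertex set splits as $V_0 \sqcup V_1$ according to the parity of the distance to $p$, and every edge joins $V_0$ to $V_1$. Since the action of $A(\Gamma)$ on $\Gamma^e$ is simplicial, $g$ either preserves this bipartition or swaps the two parts. The claim is that $g$ must preserve it. To see this, take a vertex $x$ on the axis $L$; then $x$ and $g x$ both lie on $L$ and $d_{\Gamma^e}(x, gx) = \tau(g)$. If $g$ swapped the two parts of the bipartition, then iterating, $g^k x$ would alternate sides, and $d_{\Gamma^e}(x, g^k x) = k\,\tau(g)$ would have parity opposite to $k$ for odd $k$ and agreeing with $k$ for even $k$ — in other words $\tau(g)$ would be forced odd. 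So I actually want to argue the other way: I will show directly that $g$ preserves the bipartition, hence $x$ and $gx$ lie on the \emph{same} side, hence $d_{\Gamma^e}(x,gx) = \tau(g)$ is even.

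The cleanest way to see that $g$ preserves the bipartition of $\Gamma^e$ is to use that the bipartition structure is canonical: two vertices of a tree lie in the same part if and only if the distance between them is even, and distances are preserved by the isometric action. So it suffices to exhibit a single vertex fixed by... no — $g$ is loxodromic, so it fixes no vertex. Instead I will argue via the original copy $\Gamma \subseteq \Gamma^e$. Every generator $v \in V(\Gamma)$ acts on $\Gamma^e$, and I claim each generator preserves the bipartition. Indeed, for a tree there are no edges within a part, so "same part" = "even distance"; and for the standard generators one can check $d_{\Gamma^e}(u, u^v)$ is even for all $u, v \in V(\Gamma)$ — this follows because $u^v$ and $u$ differ by conjugation by a single vertex, and in the extension graph the relevant distances are governed by Lemma~\ref{lem:suc} together with the tree structure; concretely $u$ and $u^v$ both lie in a doubling $\Gamma \cup \Gamma^{v}$ which is obtained by gluing two copies of the tree $\Gamma$ along the star $\operatorname{st}_\Gamma(v)$, and in such a doubling the induced bipartition on the two copies is consistent. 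Hence every generator, and therefore every element $g$, preserves the global bipartition of $\Gamma^e$. Consequently $x$ and $gx$ always lie in the same part, so $\tau(g) = d_{\Gamma^e}(x,gx) \in 2\mathbb{Z}$.

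The main obstacle is making the bipartition-preservation claim rigorous for all of $\Gamma^e$ rather than just for $\Gamma$: one must pass through the doubling sequence $\Gamma = \Gamma_0 \subset \Gamma_1 \subset \cdots$ with $\Gamma^e = \bigcup \Gamma_i$ and check inductively that each doubling $\Gamma_{i+1} = \Gamma_i \cup \Gamma_i^{w^\ell}$ respects the bipartition — which holds because gluing is along a star $\operatorname{st}(z)$, a connected set, so the two pieces' 2-colorings are forced to agree on the overlap and hence extend to a consistent 2-coloring of $\Gamma_{i+1}$; since $\Gamma_{i+1}$ is again a tree this 2-coloring is \emph{the} bipartition, and the conjugating element $w^\ell$ maps the bipartition of $\Gamma_i$ into that of $\Gamma_{i+1}$. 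Passing to the union, the $A(\Gamma)$-action on $\Gamma^e$ preserves a bipartition, which gives the parity statement and completes the proof. I would state this doubling induction as the one genuine lemma inside the proof and keep the rest brief, since the geodesic-axis and Bass--Serre ingredients are quoted.
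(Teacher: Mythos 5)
Your proof is correct, but it runs along a genuinely different track than the paper's. The paper's argument is a one-liner built on the forgetful graph homomorphism $\phi\colon \Gamma^e \to \Gamma$, $v^g \mapsto v$: if $x$ lies on the axis of $g$ and $\gamma$ is the geodesic from $x$ to $x^g$, then $\phi(\gamma)$ is a closed walk in the tree $\Gamma$, and closed walks in a bipartite graph have even length, so $\tau(g)=\mathrm{length}(\gamma)$ is even; no statement about the group action preserving anything is needed, only that $\phi$ sends edges to edges and both endpoints of $\gamma$ to the same vertex. You instead $2$-color $\Gamma^e$ itself and prove that the $A(\Gamma)$-action preserves the bipartition, which is what forces you into the doubling induction. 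That induction works, but it is heavier than necessary and its closing step (``passing to the union, the action preserves a bipartition'') is really carried by your other two observations --- the preserve-or-swap dichotomy for automorphisms of a connected bipartite graph plus a single even-displacement vertex for each generator --- since by itself the induction only shows $\Gamma^e$ carries a consistent $2$-coloring, which is automatic once one knows $\Gamma^e$ is a tree. Moreover your generator check can be done in one line: the generator $v$ fixes the vertex $v\in\Gamma^e$ (as $v^v=v$), so it cannot swap the two parts, hence every generator and therefore every element preserves the bipartition, and $\tau(g)=d_{\Gamma^e}(x,x^g)$ is even. What your route buys is an explicitly $A(\Gamma)$-invariant $2$-coloring of $\Gamma^e$ (it is exactly the pullback under $\phi$ of the bipartition of $\Gamma$, which makes the two proofs two faces of the same parity argument); what the paper's route buys is brevity, since the retraction to $\Gamma$ sidesteps any invariance claim on $\Gamma^e$.
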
 

\begin{proof} 
We choose a loxodromic $g \in A(\Gamma)$.
Let $\phi: \Gamma^e \to \Gamma$ be the forgetful graph homomorphism defined by $v^g \mapsto v$.
If $x \in \Gamma^e$ is a vertex on the geodesic axis of $g$ and $\gamma$ is the geodesic path joining $x$ to $x^g$, then $\phi(\gamma)$ is a closed path on $\Gamma$.
Because the length of a closed path of $\Gamma$ is even, so is the length of $\gamma$.
Therefore, $\tau(g)$ is an even integer.
\end{proof} 

\subsection{Cycles} \label{subsec:cyclegraphs}
A star of a vertex in a cycle is a path graph of length $2$.
If a cycle is of even length, then it is a bi-partite graph so that every closed path has even length.
These two facts imply the following proposition.

\begin{prop} \label{prop:spectrum_cycle}
Let $C_k$ be a cycle of length $k$ with $k \geq 6$.
\begin{enumerate}
\item \label{enum:cycles0} If $k$ is even, then we have $\operatorname{Spec}(A(C_k)) \subseteq \mathbb{Z}$.
\item \label{enum:cycles1} If $k$ is odd, then we have $\operatorname{Spec}(A(C_k)) \subseteq \{n/2 \mid n \in \mathbb{Z}\}$. And there exists a loxodromic of non-integer asymptotic translation length.
 \end{enumerate}
\end{prop}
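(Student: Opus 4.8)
The plan is to combine the structural results already established for graphs of girth at least $6$ with the combinatorial peculiarities of cycles. For $C_k$ with $k \geq 6$ the girth is exactly $k \geq 6$, so Theorem \ref{thm:RAAG_finiteness} and Corollary \ref{cor:RAAG_cyclic_permutation} apply: the maximum degree of $C_k$ is $2$, so every loxodromic $g \in A(C_k)$ permutes cyclically at most $2$ pairwise disjoint geodesics on $\Gamma^e$. Hence $g^m$ preserves a geodesic for some $m \in \{1, 2\}$, and $\tau(g) = \tau(g^m)/m$ is an integer or a half-integer by the argument of Theorem \ref{thm:rational_length}. This already gives the denominator bound in both cases; what remains is to upgrade it to $\operatorname{Spec}(A(C_k)) \subseteq \mathbb{Z}$ when $k$ is even, and to exhibit a genuinely non-integer example when $k$ is odd.

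First I would handle the even case. When $k$ is even, $C_k$ is bipartite, so every closed edge path in $C_k$ has even length; and since the extension graph $C_k^e$ of a bipartite graph is bipartite (the doubling construction preserves bipartiteness, or one invokes that $C_k^e$ is a quasi-tree with the same girth), every closed edge path in $C_k^e$ also has even length. Now if $g$ is a loxodromic with $\tau(g)$ a half-integer but not an integer, then by the above $g^2$ preserves a geodesic $L$, and $\tau(g^2) = 2\tau(g)$ is odd. Pick $x \in L$; then the geodesic segment of $L$ from $x$ to $g^2 x$ has odd length, so concatenating it with the $g^2$-image path from $g^2 x$ back to $x$ — wait, more cleanly: since $g$ permutes two disjoint geodesics $L, gL$ cyclically, pick $x \in L$, and form the closed loop $x \to gx$ (a geodesic segment in no single $g$-invariant line, but a path of length $\tau(g^2)/2 = \tau(g)$... ) Actually the cleanest argument: $g^2$ preserves $L$ and translates $x$ along $L$ by distance $\tau(g^2)$, which is odd; but $L$ projects under the forgetful map $\phi \colon C_k^e \to C_k$ to a closed walk, forcing $\tau(g^2)$ to be even, a contradiction. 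So $\tau(g) \in \mathbb{Z}$.

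For the odd case, I would produce an explicit loxodromic with $\tau(g) = n/2$ for odd $n$. Label the vertices of $C_k$ cyclically as $x_0, x_1, \dots, x_{k-1}$, and consider an element whose syllable structure forces its geodesic axis in $C_k^e$ to close up only after two fundamental periods — for instance the product of the two ``generators'' of a reflection-type element, or more concretely an element like $g = x_1 x_0$ (when $x_0, x_1$ are non-adjacent, i.e. $k \geq 4$) whose square has an axis whose $\phi$-image is an odd closed walk. Using Corollary \ref{cor:RAAG_cyclic_permutation} one checks $g$ itself does not preserve a geodesic: it genuinely swaps two disjoint geodesics $L$ and $gL$, and $g^2$ preserves $L$ with $\tau(g^2)$ odd (this is where oddness of $k$ enters — the relevant closed walk in $C_k$ has length congruent to an odd multiple of something, and with $k$ odd one can arrange it to be odd), so $\tau(g) = \tau(g^2)/2 \notin \mathbb{Z}$.

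The main obstacle is the odd case: I expect it to be nontrivial to verify that a concrete element genuinely fails to stabilize a geodesic rather than merely appearing to, and to compute $\tau(g^2)$ precisely enough to see it is odd. The even case is essentially a parity/bipartiteness bookkeeping argument once Corollary \ref{cor:RAAG_cyclic_permutation} is in hand, so the real work — and the place where the hypothesis $k \geq 6$ and the distinction even/odd genuinely bite — is constructing and analyzing the explicit loxodromic in part \eqref{enum:cycles1}.
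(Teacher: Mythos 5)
Your reduction to denominator at most $2$ via Corollary \ref{cor:RAAG_cyclic_permutation}, and your even-case argument (a $g^2$-invariant geodesic projects under the forgetful map to a closed walk in $C_k$, which has even length by bipartiteness, so $\tau(g^2)$ is even and $\tau(g)\in\mathbb{Z}$), are correct and are essentially identical to the paper's proof of part \eqref{enum:cycles0}. The gap is the existence claim in part \eqref{enum:cycles1}. You never produce a verified example, and the one concrete candidate you name, $g = x_1x_0$ with $x_0,x_1$ non-adjacent, provably fails: by Proposition \ref{prop:small} every loxodromic of syllable length $2$ preserves an honest geodesic and has $\tau(g) = 2d_\Gamma(x_0,x_1)-4$, an even integer, regardless of the parity of $k$. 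So no two-syllable element can witness a non-integer length, and your sentence ``with $k$ odd one can arrange it to be odd'' is precisely the step that still needs an argument; the parity of $k$ does not enter through any short word.

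The paper's construction is considerably more involved: with $l=(k+1)/2$ it takes $g = v_{kl}v_{(k-1)l}\cdots v_{2l}v_l$, an element of syllable length $k$ whose consecutive supports are spread far apart around the cycle, and shows $\tau(g)=k(k-4)/2$, whose numerator is odd because $k$ is odd. The verification is where the real work lies: Lemma \ref{lem:link_separating} forces any $g^2$-invariant geodesic to pass through the two-point separating sets $\operatorname{lk}_{C_k}(v_{jl})^{v_{jl}\cdots v_l}$, which reduces the problem to computing $\min_{l_0\in\{l-1,l+1\}} d_{C_k^e}(v_{l_0}, v_{l_0}^{g^2})$; matching upper and lower bounds (the elementary distance formulas of Lemma \ref{lem:cycle_paths} together with a counting argument over the choice of $\pm 1$ made at each link crossing) then give $d_{C_k^e}(v_{l_0},v_{l_0}^{g^2})=k(k-4)$ exactly, hence $\tau(g^2)$ odd and $\tau(g)\notin\mathbb{Z}$. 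Some explicit construction and distance computation of this kind is unavoidable; without it, part \eqref{enum:cycles1} is only half proved.
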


Since the maximum degree of $C_k$ is $2$, every loxodromic permutes at most two geodesics by Corollary \ref{cor:RAAG_cyclic_permutation}.
So the asymptotic translation length of a loxodromic can be expressed as a fraction of denominator $2$.
Hence, $\operatorname{Spec}(A(C_k))$ is a subset of $\{ n/2 \mid n \in \mathbb{Z} \}$.

\subsubsection{Proof of Proposition \ref{prop:spectrum_cycle}\eqref{enum:cycles0}}
For a loxodromic $g$, choose a vertex $v^h$ on a geodesic axis of $g^2$.
If $\gamma$ is a geodesic path joining $v^h$ to $v^{hg}$, then its projective image to $C_k$ is a closed path based at $v$.
Because $k$ is even, $C_k$ is a bi-partite graph so that this closed path has even length, which is equal to the length of $\gamma$.
The length of $\gamma$ is equal to $\tau(g^2)$; therefore, $\tau(g)$ is an integer. $\hfill\blacksquare$

\subsubsection{Proof of Proposition \ref{prop:spectrum_cycle}\eqref{enum:cycles1}}
It is enough to give an example of a loxodromic whose asymptotic translation length is not an integer.
Consider $C_k$ as the Cayley graph of $\mathbb{Z} / k \mathbb{Z}$ with respect to the generator $1$.
For each $a \in \mathbb{Z}$, let $v_a$ be the vertex of $C_k$ corresponding to $a$ modulo $k$, that is, the identification $\cdots = v_{a - k} = v_a = v_{a + k} = \cdots$ holds for each $a$.
Write $l := (k + 1) / 2$.
Then our goal is to show the element $g := v_{kl} v_{(k - 1)l} \dots v_{2l} v_l$ of $A(C_k)$ has asymptotic translation length $k(k - 4) / 2$.

\begin{figure}
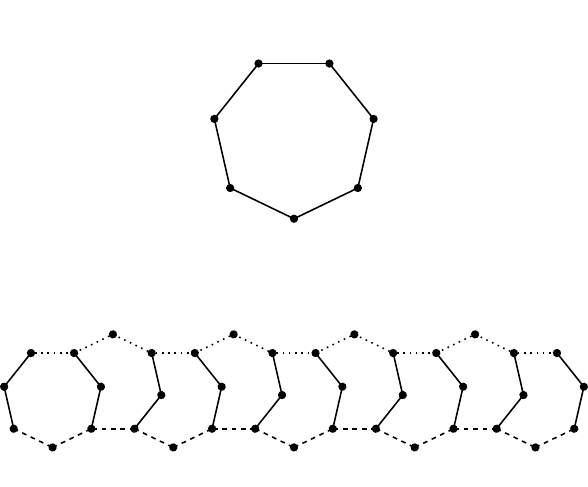
\caption{\label{fig:cycle} The dotted lines on the bottom figure are geodesics preserved by the square of $g := v_7v_3v_6v_2v_5v_1v_4$. So the asymptotic translation length of $g^2$ is $21$, and hence, we have $\tau(g) = 21/2$.}
\end{figure}

By Lemma \ref{cor:RAAG_cyclic_permutation}, there exists a geodesic axis of $g^2$ passing through either $v_{l - 1}$ or $v_{l + 1}$ because $v_{l - 1}$ and $v_{l + 1}$ are all vertices of the link of $v_l$.
Then we have $\tau(g^2) = \min \{ d_{C_k^e}(v_{l - 1}, v_{l - 1}^{g^2}), d_{C_k^e}(v_{l + 1}, v_{l + 1}^{g^2}) \}$.
The remaining part of the proof is the calculation of the distances $d_{C_k^e}(v_{l - 1}, v_{l - 1}^{g^2})$ and $d_{C_k^e}(v_{l + 1}, v_{l + 1}^{g^2})$.

Fix $l_0 \in \{l - 1, l + 1\}$.
Let $\gamma$ is a geodesic joining $v_{l_0}$ and $v_{l_0}^{g^2}$, which lies on the axial subgraph obtained from the syllable decomposition $g = v_{kl} \dots v_l$.
Then $\gamma$ intersects $(\operatorname{lk}_\Gamma(v_{jl}))^{v_{jl} \dots v_{2l} v_l} = \{ v_{jl - 1}^{v_{jl} \dots v_{2l} v_l}, v_{jl + 1}^{v_{jl} \dots v_{2l} v_l} \}$ for each $j \in \{1, \dots, 2k+1\}$ by Lemma \ref{lem:link_separating}.
Then there exists a map $\phi: \{1, \dots, 2k+1\} \to \{-1, 1\}$ such that $\gamma$ contains the vertex $v_{jl + \phi(j)}^{v_{jl} \dots v_{l}}$ for all $j$.

To compute the distance between $v_{l_0}$ and $v_{l_0}^{g^2}$, we need some basic computations of the following lemma.

\begin{lem} \label{lem:cycle_paths}
For each $j \in \mathbb{Z}$, the following equations are satisfied.
\begin{enumerate}
\item \label{enum:cycle_paths1} $d_{C_k}(v_{jl - 1}, v_{(j + 1)l +1}) = k - l - 2$
\item \label{enum:cycle_paths2} $d_{C_k}(v_{jl + 1}, v_{(j + 1)l -1}) = l - 2$
\item \label{enum:cycle_paths3} $d_{C_k}(v_{jl - 1}, v_{(j + 1)l -1}) = k - l = d_{C_k}(v_{jl + 1}, v_{(j + 1)l +1})$
\end{enumerate}
\end{lem}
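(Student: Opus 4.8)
The plan is to reduce all three equalities to the elementary formula for distances in a cycle. Since $C_k$ is the Cayley graph of $\mathbb{Z}/k\mathbb{Z}$ with respect to the generator $1$, for any integers $a$ and $b$ the distance $d_{C_k}(v_a, v_b)$ depends only on the residue $r \in \{0, 1, \dots, k-1\}$ of $a - b$ modulo $k$, and it equals $\min\{r,\, k - r\}$: a shortest edge path between $v_a$ and $v_b$ either increases the label repeatedly ($r$ steps) or decreases it repeatedly ($k - r$ steps), and no other path is shorter. So the first step is simply to record this observation.

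Next I would compute the relevant label differences, which are independent of $j$ since consecutive multiples of $l$ differ by exactly $l$. For \eqref{enum:cycle_paths1} the difference is $((j+1)l + 1) - (jl - 1) = l + 2$; for \eqref{enum:cycle_paths2} it is $((j+1)l - 1) - (jl + 1) = l - 2$; and for \eqref{enum:cycle_paths3} both differences equal $((j+1)l \pm 1) - (jl \pm 1) = l$. Since we are in the odd case with $k > 5$, we have $k \geq 7$ and $l = (k+1)/2$, so each of $l - 2$, $l$, $l + 2$ lies strictly between $0$ and $k$ and is therefore exactly the residue $r$ appearing in the distance formula.

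Finally I would identify $\min\{r, k-r\}$ in each case using $l = (k+1)/2$. For $r = l + 2 = (k+5)/2$ one has $k - r = (k-5)/2 = k - l - 2 < r$, which gives \eqref{enum:cycle_paths1}. For $r = l - 2 = (k-3)/2$ one has $k - r = (k+3)/2 > r$, so the distance is $l - 2$, giving \eqref{enum:cycle_paths2}. For $r = l = (k+1)/2$ one has $k - r = (k-1)/2 = k - l < r$, giving both equalities in \eqref{enum:cycle_paths3}.

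There is essentially no obstacle here: once the cycle-distance formula is in place the argument is a direct computation. The only point needing a little care is tracking which label difference corresponds to the ``short'' way around the cycle and which to the ``long'' way, i.e.\ checking the three inequalities $l + 2 > k - l - 2$, $\ l - 2 < k - l + 2$, and $\ l > k - l$; each follows immediately from $l = (k+1)/2$ together with $k \geq 7$.
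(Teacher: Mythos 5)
Your proof is correct and follows essentially the same route as the paper: compute the two arc lengths around the cycle and take the minimum, using $l=(k+1)/2$ and $k\geq 7$ to decide which arc is shorter. In fact your label difference $l+2$ in case \eqref{enum:cycle_paths1} is the accurate one (the paper's displayed computation has a minor slip there), and you carry out explicitly the cases \eqref{enum:cycle_paths2} and \eqref{enum:cycle_paths3} that the paper leaves as exercises.
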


\begin{proof}
Note that $d_{C_k}(v_{jl - 1}, v_{(j + 1)l +1})$ is the minimum between $(j + 1)l + 1 - (jl + 1) = l$ and $k + jl - 1 - ((j+1)l + 1) = k - l - 2$.
So the equation \eqref{enum:cycle_paths1} holds.
The proofs of \eqref{enum:cycle_paths2} and \eqref{enum:cycle_paths3} are similar to the former.
We leave them as exercises.
\end{proof}

By the equations of the above lemma, we can compute the distance between $v_{l_0}$ and $v_{l_0}^{g^2}$.

\begin{lem}
The inequation holds: $d_{C_k^e}(v_{l_0}, v_{l_0}^{g^2}) \leq k(k - 4)$.
\end{lem}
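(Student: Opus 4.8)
The plan is to exhibit an explicit walk of length $k(k-4)$ from $v_{l_0}$ to $v_{l_0}^{g^2}$ in $C_k^e$; since a walk of length $L$ between two vertices witnesses that their distance is at most $L$, this proves the bound. Write $\epsilon := l_0 - l \in \{-1,+1\}$ and recall the notation $g(m) = s_r\cdots s_0\,g^q$ for $m = kq+r$, where $s_i := v_{il}$ and all vertex indices are read modulo $k$; in particular $g(j) = v_{jl}\cdots v_{2l}v_l$ for $1\le j\le k$, while $g(2k)=g^2$ and $g(2k+1)=v_l\,g^2$.

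First I would fix the alternating choice function $\phi\colon\{1,\dots,2k+1\}\to\{-1,+1\}$ given by $\phi(j):=(-1)^{j-1}\epsilon$, so that $\phi(1)=\phi(2k+1)=\epsilon$, and set $p_j := v_{jl+\phi(j)}^{\,g(j)}$. The one elementary fact to record is that $v_{jl\pm1}$ is adjacent to $v_{jl}$ in $C_k$, hence commutes with the leading syllable $s_j = v_{jl}$, so $p_j = v_{jl+\phi(j)}^{\,g(j)} = v_{jl+\phi(j)}^{\,g(j-1)}$ and therefore $p_j\in\Gamma^{g(j-1)}\cap\Gamma^{g(j)}$ for every $j$. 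At the two ends this gives $p_1 = v_{l+\epsilon}^{\,v_l} = v_{l+\epsilon} = v_{l_0}$ and, using $(2k+1)l\equiv l\pmod k$ together with $g(2k+1)=v_l\,g^2$, also $p_{2k+1} = v_{l+\epsilon}^{\,v_l g^2} = v_{l+\epsilon}^{\,g^2} = v_{l_0}^{g^2}$.

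Next I would assemble the walk. For each $j\in\{1,\dots,2k\}$ the containment just proved (used with index $j$ and with index $j+1$) shows that both $p_j$ and $p_{j+1}$ lie in $\Gamma^{g(j)}$, which is a copy of $C_k$ via the graph isomorphism $v\mapsto v^{g(j)}$; so $p_j$ and $p_{j+1}$ can be joined inside $\Gamma^{g(j)}\subseteq C_k^e$ by a path of length $d_{C_k}(v_{jl+\phi(j)},\,v_{(j+1)l+\phi(j+1)})$. Concatenating these $2k$ paths yields a walk from $v_{l_0}$ to $v_{l_0}^{g^2}$ of length $\sum_{j=1}^{2k} d_{C_k}(v_{jl+\phi(j)},\,v_{(j+1)l+\phi(j+1)})$. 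I would then evaluate this sum with Lemma~\ref{lem:cycle_paths}: since $\phi$ alternates, $\phi(j)\ne\phi(j+1)$ always, so each term falls under item~\eqref{enum:cycle_paths1} or item~\eqref{enum:cycle_paths2}; exactly $k$ of the $2k$ transitions have $(\phi(j),\phi(j+1))=(-1,+1)$, contributing $k-l-2$ each, and the remaining $k$ have $(\phi(j),\phi(j+1))=(+1,-1)$, contributing $l-2$ each, and this split is the same for either sign of $\epsilon$. Hence the walk has length $k(k-l-2)+k(l-2)=k(k-4)$, so $d_{C_k^e}(v_{l_0},v_{l_0}^{g^2})\le k(k-4)$.

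I do not expect a serious obstacle in this lemma: once the alternating $\phi$ is written down, the argument is index bookkeeping modulo $k$ together with the single recurring observation that conjugation by the leading syllable $v_{jl}$ fixes its two neighbours $v_{jl\pm1}$, which is exactly what lets consecutive segments live in adjacent translates $\Gamma^{g(j)}$ and glue into one connected walk. The real work is the companion lower bound $d_{C_k^e}(v_{l_0},v_{l_0}^{g^2})\ge k(k-4)$ needed to conclude $\tau(g)=k(k-4)/2$: there one must show that every geodesic, which by Lemma~\ref{lem:link_separating} is forced to meet each link $\operatorname{lk}_{C_k}(v_{jl})^{g(j)}$ and hence realizes some choice function $\phi$, has length at least $\sum_{j}d_{C_k}(v_{jl+\phi(j)},v_{(j+1)l+\phi(j+1)})$, and that this sum, over all admissible $\phi$, is minimized by the alternating one and equals $k(k-4)$.
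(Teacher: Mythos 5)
Your proposal is correct and follows essentially the same route as the paper: the paper also picks the alternating sign pattern $v_{jl+(-1)^j}^{v_{jl}\cdots v_l}$ (for $l_0=l-1$, and its mirror for $l_0=l+1$), bounds the distance by the triangle inequality through these intermediate vertices, identifies each consecutive distance with a distance in $C_k$ via the commuting-syllable observation, and evaluates the sum with Lemma~\ref{lem:cycle_paths} to get $k(l-2)+k(k-l-2)=k(k-4)$. Your version merely makes explicit the endpoint identifications $p_1=v_{l_0}$, $p_{2k+1}=v_{l_0}^{g^2}$ and the walk-concatenation that the paper leaves implicit.
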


\begin{proof}
By triangle inequality and Lemma \ref{lem:cycle_paths}\eqref{enum:cycle_paths1}\eqref{enum:cycle_paths2}, we can obtain an upper bound of the distance: If $l_0 = l - 1$, then
\begin{align*}
d_{C_k^e}(v_{l - 1}, v_{l - 1}^{g^2}) &\leq \sum_{j = 1}^{2k} d_{C_k^e}(v_{jl + (-1)^j}^{v_{jl} \dots v_l}, v_{(j + 1)l + (-1)^{j + 1}}^{v_{(j+1)l} \dots v_l}) \\
&= \sum_{j = 1}^{2k} d_{C_k^e}(v_{jl + (-1)^j}, v_{(j + 1)l + (-1)^{j + 1}}) \\ 
&= k(l - 2) + k(k - l - 2) = k(k - 4).
\end{align*}
Similarly, we obtain the inequality $d_{C_k^e}(v_{l + 1}, v_{l - 1}^{g^2}) \leq k(k - 4)$. So the statement holds.
\end{proof}

To know the exact distance of $v_{l_0}$ and $v_{l_0}^{g^2}$, we need the following lemma.

\begin{lem}
We have $d_{C_k^e}(v_{l_0}, v_{l_0}^{g^2}) \geq k (k - 4)$.
\end{lem}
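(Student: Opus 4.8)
The plan is to bound from below the length of a geodesic $\gamma$ realizing $d_{C_k^e}(v_{l_0},v_{l_0}^{g^2})$ by adding up the distances between the successive vertices at which $\gamma$ meets the separating links $\bigl(\operatorname{lk}_\Gamma(v_{jl})\bigr)^{v_{jl}\cdots v_l}$. By Proposition \ref{prop:conv} we may take $\gamma$ inside the axial subgraph $\mathcal{A}_{v_{kl}\cdots v_l}$; then, exactly as in the discussion preceding the lemma, $\gamma$ meets each of these links, and by Lemma \ref{lem:link_separating} (applied to cyclic permutations of the syllable decomposition, so that each $\bigl(\operatorname{lk}_\Gamma(v_{jl})\bigr)^{v_{jl}\cdots v_l}$ separates the ends of $\mathcal{A}_{v_{kl}\cdots v_l}$) it meets them in the order $j=1,\dots,2k+1$. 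Writing $w_j:=v_{jl+\phi(j)}^{v_{jl}\cdots v_l}$ for the corresponding checkpoint vertices and choosing the map $\phi$ so that $w_1=v_{l_0}$ and $w_{2k+1}=v_{l_0}^{g^2}$ (which forces $\phi(1)=\phi(2k+1)$, both equal to the sign of $l_0-l$), the subpaths of $\gamma$ joining $w_j$ to $w_{j+1}$ overlap only in endpoints, so
\[
d_{C_k^e}(v_{l_0},v_{l_0}^{g^2})=\operatorname{length}(\gamma)\ \geq\ \sum_{j=1}^{2k} d_{C_k^e}(w_j,w_{j+1}).
\]

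Next I would identify each summand. Conjugating by $(v_{jl}\cdots v_l)^{-1}$ is an isometry of $\Gamma^e$ carrying $w_j$ to $v_{jl+\phi(j)}$ and $w_{j+1}$ to $v_{(j+1)l+\phi(j+1)}^{\,v_{(j+1)l}}=v_{(j+1)l+\phi(j+1)}$, the last equality because $v_{(j+1)l\pm1}$ is adjacent to $v_{(j+1)l}$ in $C_k$ and hence commutes with it. Thus $d_{C_k^e}(w_j,w_{j+1})=d_{C_k^e}(v_{jl+\phi(j)},v_{(j+1)l+\phi(j+1)})=d_{C_k}(v_{jl+\phi(j)},v_{(j+1)l+\phi(j+1)})$ by Lemma \ref{lem:suc}\eqref{en1:geo}, and Lemma \ref{lem:cycle_paths} evaluates this to $(k-5)/2$ when $(\phi(j),\phi(j+1))=(-1,1)$, to $(k-3)/2$ when $(\phi(j),\phi(j+1))=(1,-1)$, and to $(k-1)/2$ when $\phi(j)=\phi(j+1)$.

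Finally I would carry out the elementary optimization over $\phi$. Let $p$ be the number of indices $j\in\{1,\dots,2k\}$ with $(\phi(j),\phi(j+1))=(-1,1)$; since $\phi(1)=\phi(2k+1)$ this also equals the number of descents $(\phi(j),\phi(j+1))=(1,-1)$, so exactly $2k-2p$ indices have $\phi(j)=\phi(j+1)$ and $p\leq k$. Hence
\[
\sum_{j=1}^{2k} d_{C_k^e}(w_j,w_{j+1})=p\cdot\tfrac{k-5}{2}+p\cdot\tfrac{k-3}{2}+(2k-2p)\cdot\tfrac{k-1}{2}=p(k-4)+(k-p)(k-1)=k^2-k-3p\ \geq\ k(k-4),
\]
using $p\leq k$, and combining with the first display yields $d_{C_k^e}(v_{l_0},v_{l_0}^{g^2})\geq k(k-4)$. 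The one step that needs care is the claim that $\gamma$ crosses the separating links in the correct linear order so that the subpaths between consecutive checkpoints are non-overlapping; this is where one genuinely uses that $\mathcal{A}_{v_{kl}\cdots v_l}$ is a quasi-line together with the separation statement of Lemma \ref{lem:link_separating} (equivalently, an iteration of Lemma \ref{lem:sylsep}). Everything else is conjugation bookkeeping plus Lemma \ref{lem:cycle_paths}.
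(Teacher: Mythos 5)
Your proof is correct and takes essentially the same route as the paper: decompose the geodesic at the checkpoint vertices where it crosses the conjugated links, evaluate each consecutive pair with Lemma \ref{lem:cycle_paths} (after conjugating back into $C_k$), and then optimize over the sign pattern $\phi$. The only difference is cosmetic bookkeeping in the last step: you use $\phi(1)=\phi(2k+1)$ to force the numbers of ascents and descents to coincide and bound $k^2-k-3p$ via $p\le k$, while the paper bounds $b_1,b_2\le k$ separately by the pigeonhole principle to get $k^2-k-2b_1-b_2\ge k(k-4)$; both yield the same lower bound.
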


\begin{proof}
Let $b_1, b_2, b_3$ be the numbers defined as follows.
\begin{itemize}
\item $b_1 := \lvert \{ j \in \{1, \dots, 2k\} \mid \phi(j) = -1, \phi(j + 1) = 1 \} \rvert$
\item $b_2 := \lvert \{ j \in \{1, \dots, 2k\} \mid \phi(j) = 1, \phi(j + 1) = -1 \} \rvert$
\item $b_3 := 2k - (b_1 + b_2) = \lvert \{ j \in \{1, \dots, 2k\} \mid \phi(j)\phi(j + 1) = 1 \} \rvert$
\end{itemize}
Then $b_1$ and $b_2$ cannot be larger than $k$ because of the pigeonhole principle.
From Lemma \ref{lem:cycle_paths}, we get the lower bound of the distance:
\begin{align*}
d_{C_k^e}(v_{l_0}, v_{l_0}^{g^2}) &= \sum_{j = 1}^{2k} d_{C_k^e}(v_{jl + \phi(j)}^{v_{jl} \dots v_l}, v_{(j+1)l + \phi(j+1)}^{v_{(j+1)l} \dots v_l}) \\
&= \sum_{j = 1}^{2k} d_{C_k^e}(v_{jl+\phi(j)}, v_{(j + 1)l + \phi(j + 1)}) \\
&= b_1 (k - l - 2) + b_2 (l - 2) + b_3 (k - l) \\
&= k^2 - k - 2b_1 - b_2 \geq k^2 - 4k =  k(k - 4).
\end{align*}
This leads the statement.
\end{proof}

Due to these two inequalities, we have $d_{C_k^e}(v_{l_0}, v_{l_0}^{g^2}) = k(k - 4)$ for all $l_0 \in \{l - 1, l+1\}$.
So the asymptotic translation length of $g^2$ is $k(k - 4)$.
Hence, the asymptotic translation length of $g$ is $k(k - 4) / 2$, which finishes the proof.
$\hfill \blacksquare$

\subsection{Arbitrary denominator} \label{subsec:arbitrarydenominator}
The $\kappa$-finiteness property of each right-angled Artin group does not guarantee the existence of a global denominator for asymptotic translation lengths of all right-angled Artin groups.
In fact, given arbitrary positive integer $k$, we discover a loxodromic of a right-angled Artin group whose asymptotic translation length is expressed as a positive irreducible fraction of denominator $k$.

\begin{prop}[Asymptotic translation length of arbitrary denominator] \label{prop:arbitrary_integer}
For a positive integer $k \geq 2$, there exist a pair of a connected finite simplicial graph $\Gamma_k$ and an element $g \in A(\Gamma_k)$ of syllable length $3$ such that the asymptotic translation length of $g$ on the extension graph $\Gamma_k^e$ is $3 + (1 / k)$.
\end{prop}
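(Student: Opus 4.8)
The plan is to produce, for each $k\ge 2$, a finite connected simplicial graph $\Gamma_k$ of girth at least $6$ together with three distinct, pairwise non-adjacent vertices $v_1,v_2,v_3$ having no common neighbour in $\Gamma_k$, and to set $g:=v_1v_2v_3\in A(\Gamma_k)$ (or, more generally, a product of three syllables supported on these vertices). With these hypotheses $g$ is cyclically syllable-reduced with $\|g\|_{\rm syl}=3$, and it is loxodromic, since $\operatorname{supp}(g)=\{v_1,v_2,v_3\}$ cannot lie in a join of $\Gamma_k$ (a join containing all three would force either a pair of the $v_i$ to be adjacent or a single vertex adjacent to all three), so Proposition \ref{prop:ellip} applies. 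Because $3+1/k=(3k+1)/k$ is already in lowest terms, it suffices to prove $\tau(g)=(3k+1)/k$: the denominator of $\tau(g)$ is then automatically exactly $k$, and no separate argument for the irreducibility is needed.

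\textbf{Locating the computation inside an axial subgraph.} Since $\Gamma_k$ has girth at least $6$, Proposition \ref{prop:conv} shows that, for the syllable decomposition $g=s_3s_2s_1$ with $s_1=v_3$, the subgraph $\mathcal{A}:=\mathcal{A}_{s_3s_2s_1}=\bigcup_{m\in\mathbb{Z}}\Gamma_k^{g(m)}$ is an axial subgraph of $g$, and Lemma \ref{lem:link_separating} shows that $\operatorname{lk}_{\Gamma_k}(v_3)$ separates its ends; so if $\Gamma_k$ is built with $\deg_{\Gamma_k}(v_3)=k$, then $\mathcal{A}$ has width at most $k$ and, by Lemma \ref{lem:refinement_Bowditch} (equivalently Corollary \ref{cor:RAAG_cyclic_permutation}), $g$ cyclically permutes at most $k$ pairwise disjoint geodesics of $\mathcal{A}$. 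Concretely $\mathcal{A}$ is the bi-infinite union of the translates $\dots,\Gamma_k,\Gamma_k^{v_3},\Gamma_k^{v_2v_3},\Gamma_k^{g},\Gamma_k^{v_3g},\dots$ glued along the ``walls'' $\operatorname{st}_{\Gamma_k}(v_3)^{g^\ell}$, $\operatorname{st}_{\Gamma_k}(v_2)^{v_3g^\ell}$, $\operatorname{st}_{\Gamma_k}(v_1)^{v_2v_3g^\ell}$ for $\ell\in\mathbb{Z}$. Fixing a neighbour $u_0$ of $v_3$, the whole problem then reduces to computing $d_{\Gamma_k^e}(u_0,u_0^{g^{nk}})$ for all $n\ge 1$ and showing it equals $(3k+1)n$; indeed, once this is established, $\tau(g)=\lim_{n\to\infty}d_{\Gamma_k^e}(u_0,u_0^{g^{nk}})/(nk)=(3k+1)/k$.

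\textbf{The construction and the two-sided distance estimate.} I would choose $\Gamma_k$ so that $v_1,v_2,v_3$ are pairwise at distance at least $3$ and the $k$ neighbours $u_0,\dots,u_{k-1}$ of $v_3$ are wired through $v_1$ (and, if needed, $v_2$) by internally disjoint paths of carefully tuned lengths, arranged cyclically so that every geodesic of $\Gamma_k$ crossing successive $v_3$-walls of $\mathcal{A}$ is forced to pass from $u_i$ to $u_{i+1\bmod k}$ and to spend exactly three edges per period, with exactly one extra edge picked up over each full cycle of length $k$; the girth requirement is precisely what dictates these path lengths, since any cycle of $\Gamma_k$ must traverse two of the paths. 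The estimate then splits exactly as in the cycle case (compare Proposition \ref{prop:spectrum_cycle}\eqref{enum:cycles1} and Lemma \ref{lem:cycle_paths}). For the upper bound $d_{\Gamma_k^e}(u_0,u_0^{g^{nk}})\le(3k+1)n$ one concatenates explicit geodesic segments, one inside each successive translate of $\mathcal{A}$, using Lemma \ref{lem:suc}\eqref{en1:geo} to evaluate the segment lengths within a single translate and Proposition \ref{prop:shallow_isom_embed} (respectively Lemma \ref{lem:finite_convex}) to guarantee the concatenation stays in $\mathcal{A}$ and is a geodesic. For the lower bound $d_{\Gamma_k^e}(u_0,u_0^{g^{nk}})\ge(3k+1)n$ one notes that any geodesic between the two points must cross every wall, so its length is at least the sum of the in-copy distances between consecutive crossing points, each of which dominates a distance computed in $\Gamma_k$ by Lemma \ref{lem:suc}\eqref{en2:geo}; a pigeonhole count over which of the $k$ link-vertices is used at each $v_3$-wall (the analogue of the quantities $b_1,b_2,b_3$ in the cycle computation) then shows this sum is at least $(3k+1)n$. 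The main obstacle is pinning down the wiring of the $u_i$ into $v_1,v_2$ so that simultaneously (a) the girth stays at least $6$, (b) $g$ remains loxodromic, (c) the permutation induced on $\{u_0,\dots,u_{k-1}\}$ is a single $k$-cycle, and (d) the optimal geodesic has length exactly $3k+1$ per $k$ periods, rather than $3k$ or $3k+2$. Once $\Gamma_k$ is fixed, the remaining estimates are routine distance bookkeeping in a quasi-tree.
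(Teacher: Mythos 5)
There is a genuine gap: the proposal never actually produces $\Gamma_k$. This proposition is an existence statement whose entire content is the explicit construction together with the distance bookkeeping, and your write-up defers exactly that (``the main obstacle is pinning down the wiring \dots so that (a)--(d) hold simultaneously''), offering no evidence that requirements (a)--(d) are compatible. The surrounding framework you set up (three non-commuting syllables, the axial subgraph $\mathcal{A}_{s_3s_2s_1}$, walls given by conjugated stars, upper bound by concatenation and lower bound by counting wall crossings) is indeed the same skeleton as the paper's argument, but without a concrete graph and the analogues of Lemmas \ref{lem:Graph_k_property}--\ref{lem:k_lower_bound} there is no proof. (A minor difference in the reduction: the paper uses the $\kappa$-finiteness machinery, via Lemma \ref{lem:link_separating} and Corollary \ref{cor:RAAG_cyclic_permutation}, to conclude that some power $g^m$ with $m\le k$ preserves a geodesic through a link vertex, so only the finitely many distances $d(u_i,u_i^{g^n})$, $n\le k$, need to be computed; your route via $d(u_0,u_0^{g^{nk}})$ for all $n$ would require uniform-in-$n$ estimates, which you also do not supply.)

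Worse, your stated design constraints are mutually inconsistent, so the plan as written cannot be completed. Suppose $\Gamma_k$ has girth at least $6$ and the three supports $v_1,v_2,v_3$ are pairwise at distance at least $3$, as you require so that Proposition \ref{prop:conv} applies. A period of cost exactly $3$ means a length-$3$ path in $\Gamma_k^e$ from a crossing vertex of one $v_3$-wall to one of the next, through the intermediate $v_2$- and $v_1$-walls; unwinding the commutations (and using $d_{\Gamma_k}(v_i,v_j)\ge 3$ to exclude the degenerate cases where a crossing vertex is a wall center or lies on two walls) this forces vertices $u_i,u_j\in\operatorname{lk}_{\Gamma_k}(v_3)$, $a\in\operatorname{lk}_{\Gamma_k}(v_2)$, $b\in\operatorname{lk}_{\Gamma_k}(v_1)$ with $u_i\sim a\sim b\sim u_j$ in $\Gamma_k$, and then $v_3,u_i,a,b,u_j$ is a cycle of length at most $5$, contradicting the girth hypothesis. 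Hence every period costs at least $4$, every such element has $\tau(g)\ge 4$, and the target $3+1/k$ is unreachable under your hypotheses: since $(3k+1)/k<4$, most periods would have to cost exactly $3$. The paper's actual $\Gamma_k$ (three hubs $u,v,t$ with $k$ leaves each, leaves chained cyclically $u_i\sim v_i\sim t_i\sim u_{i+1}$ with exactly one chain subdivided once) has girth $5$, and the lower bounds are obtained from the wall-separation statement Lemma \ref{lem:sylsep}, which needs no girth assumption, together with the explicit distance computations of Lemma \ref{lem:uiuj_distance}; so to repair your argument you must drop the girth-$6$ requirement rather than try to tune path lengths around it.
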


\subsubsection{Construction of $\Gamma_k$}

First, we construct a simplicial graph $\Gamma_k$ for $k \geq 2$.
Consider the disjoint union of three star graphs, each of which has $k$ leaves.
Give labels to the $k$-valent vertices of stars as $u, v, t$, respectively.
And let us name the leaves adjacent to $u$ as $u_1, \dots, u_k$, respectively.
Similarly, give names the leaves adjacent to $v$, (\emph{resp.}, the leaves adjacent to $t$) as $v_1, \dots, v_k$, (\emph{resp.}, as $t_1, \dots, t_k$).

Add edges between $u_i$ and $v_i$ for all $i = 1, \dots, k$.
Similarly, join $v_i$ and $t_i$ by an edge for all $i = 1, \dots, k$.
Connect $t_i$ and $u_{i+1}$ by an edge for all $i = 1, \dots, k - 1$, but we join $t_k$ and $u_1$ by a length $2$ path.
$\Gamma_k$ denotes this resulting graph.
As examples, see Figure \ref{subfig:1121} for $k = 2$ and Figure \ref{subfig:de31} for $k = 4$.

\begin{figure}
\subfloat[][]{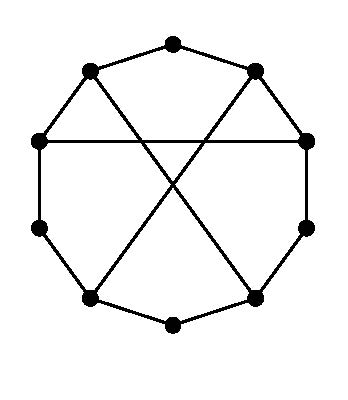 \label{subfig:1121}}\\
\subfloat[][]{
\begingroup%
  \makeatletter%
  \providecommand\color[2][]{%
    \errmessage{(Inkscape) Color is used for the text in Inkscape, but the package 'color.sty' is not loaded}%
    \renewcommand\color[2][]{}%
  }%
  \providecommand\transparent[1]{%
    \errmessage{(Inkscape) Transparency is used (non-zero) for the text in Inkscape, but the package 'transparent.sty' is not loaded}%
    \renewcommand\transparent[1]{}%
  }%
  \providecommand\rotatebox[2]{#2}%
  \newcommand*\fsize{\dimexpr\f@size pt\relax}%
  \newcommand*\lineheight[1]{\fontsize{\fsize}{#1\fsize}\selectfont}%
  \ifx\svgwidth\undefined%
    \setlength{\unitlength}{326.12408976bp}%
    \ifx\svgscale\undefined%
      \relax%
    \else%
      \setlength{\unitlength}{\unitlength * \real{\svgscale}}%
    \fi%
  \else%
    \setlength{\unitlength}{\svgwidth}%
  \fi%
  \global\let\svgwidth\undefined%
  \global\let\svgscale\undefined%
  \makeatother%
  \begin{picture}(1,0.4126285)%
    \lineheight{1}%
    \setlength\tabcolsep{0pt}%
    \put(0,0){\includegraphics[width=\unitlength,page=1]{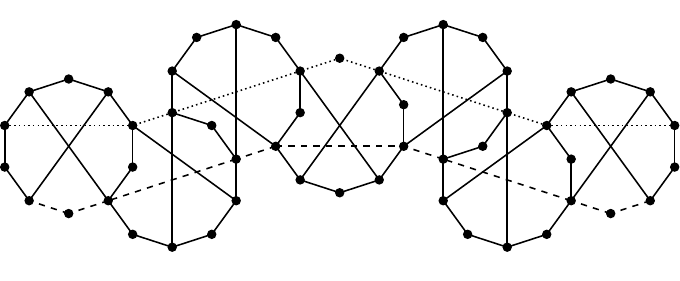}}%
    \put(0.22934422,0.00435035){\makebox(0,0)[lt]{\lineheight{1.25}\smash{\begin{tabular}[t]{l}$\Gamma_2^{u}$\end{tabular}}}}%
    \put(0.08885738,0.31843037){\makebox(0,0)[lt]{\lineheight{1.25}\smash{\begin{tabular}[t]{l}$\Gamma_2$\end{tabular}}}}%
    \put(0.31563928,0.39752687){\makebox(0,0)[lt]{\lineheight{1.25}\smash{\begin{tabular}[t]{l}$\Gamma_2^{vu}$\end{tabular}}}}%
    \put(0.47966765,0.08329141){\makebox(0,0)[lt]{\lineheight{1.25}\smash{\begin{tabular}[t]{l}$\Gamma_2^g$\end{tabular}}}}%
    \put(0.62068585,0.39752687){\makebox(0,0)[lt]{\lineheight{1.25}\smash{\begin{tabular}[t]{l}$\Gamma_2^{ug}$\end{tabular}}}}%
    \put(0.70589793,0.00405329){\makebox(0,0)[lt]{\lineheight{1.25}\smash{\begin{tabular}[t]{l}$\Gamma_2^{vug}$\end{tabular}}}}%
    \put(0.87591796,0.31843033){\makebox(0,0)[lt]{\lineheight{1.25}\smash{\begin{tabular}[t]{l}$\Gamma_2^{g^2}$\end{tabular}}}}%
  \end{picture}%
\endgroup%
 \label{subfig:1122}}
\caption{\label{fig:112} Let $g$ denote the loxodromic $tvu$. The dotted lines in Figure \ref{subfig:1122} are segments of geodesics preserved by $g^2$. So the asymptotic translation length of $g$ is $7/2$.}
\end{figure}

\begin{figure}
\subfloat[][]{
\begingroup%
  \makeatletter%
  \providecommand\color[2][]{%
    \errmessage{(Inkscape) Color is used for the text in Inkscape, but the package 'color.sty' is not loaded}%
    \renewcommand\color[2][]{}%
  }%
  \providecommand\transparent[1]{%
    \errmessage{(Inkscape) Transparency is used (non-zero) for the text in Inkscape, but the package 'transparent.sty' is not loaded}%
    \renewcommand\transparent[1]{}%
  }%
  \providecommand\rotatebox[2]{#2}%
  \newcommand*\fsize{\dimexpr\f@size pt\relax}%
  \newcommand*\lineheight[1]{\fontsize{\fsize}{#1\fsize}\selectfont}%
  \ifx\svgwidth\undefined%
    \setlength{\unitlength}{105.91772966bp}%
    \ifx\svgscale\undefined%
      \relax%
    \else%
      \setlength{\unitlength}{\unitlength * \real{\svgscale}}%
    \fi%
  \else%
    \setlength{\unitlength}{\svgwidth}%
  \fi%
  \global\let\svgwidth\undefined%
  \global\let\svgscale\undefined%
  \makeatother%
  \begin{picture}(1,1.0594252)%
    \lineheight{1}%
    \setlength\tabcolsep{0pt}%
    \put(0.94591499,0.17014471){\makebox(0,0)[lt]{\lineheight{1.25}\smash{\begin{tabular}[t]{l}$u$\end{tabular}}}}%
    \put(0.47668569,0.98677155){\makebox(0,0)[lt]{\lineheight{1.25}\smash{\begin{tabular}[t]{l}$v$\end{tabular}}}}%
    \put(-0.00355075,0.17014463){\makebox(0,0)[lt]{\lineheight{1.25}\smash{\begin{tabular}[t]{l}$t$\end{tabular}}}}%
    \put(0.45614548,0.01664202){\makebox(0,0)[lt]{\lineheight{1.25}\smash{\begin{tabular}[t]{l}$\Gamma_4$\end{tabular}}}}%
    \put(0,0){\includegraphics[width=\unitlength,page=1]{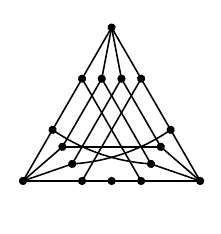}}%
  \end{picture}%
\endgroup%
 \label{subfig:de31}} \\
\subfloat[][This figure is a part of the axial subgraph of $g := tvu$ constructed by Proposition \ref{prop:conv}.]{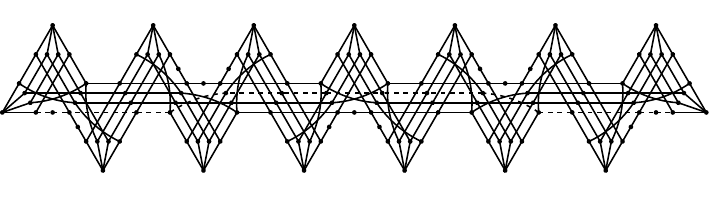 \label{subfig:de32}}
\caption{\label{fig:de3} For the loxodromic $g := tuv$, the dotted line in Figure \ref{subfig:de32} is a segment of a geodesic preserved by $g^4$. So the asymptotic translation length of $g^4$ is $13$. Therefore, we have $\tau(g) = 13 / 4$.}
\end{figure}

We collect the properties of $\Gamma_k$ in the following.

\begin{lem} \label{lem:Graph_k_property}
For each $k \geq 2$, the graph $\Gamma_k$ constructed by the above satisfies the following.
\begin{enumerate}
\item \label{enum:grk_prop1} The links of $u, v, t$ are $\{ u_1, \dots, u_k \}$, $\{ v_1, \dots, v_k \}$, and $\{ t_1, \dots, t_k\}$, respectively.
\item \label{enum:grk_prop2} $d_{\Gamma_k}(u_i, v_j) = \begin{cases} 1 & \text{if}~ i = j, \\ 2 & \text{if}~ i = j + 1, \\ 3, & \text{otherwise}, \end{cases}$ and $d_{\Gamma_k}(v_i, t_j) = \begin{cases} 1 & \text{if}~ i = j, \\ 2 & \text{if}~ i = j + 1, \\ 3, & \text{otherwise}. \end{cases}$
\item \label{enum:grk_prop3} $d_{\Gamma_k}(t_i, u_j) = \begin{cases} 1 & \text{if}~ i + 1 = j, \\ 2 & \text{if}~ i = j ~ \text{or} ~ (i, j) = (k, 1), \\ 3, & \text{otherwise}. \end{cases}$
\end{enumerate}

\end{lem}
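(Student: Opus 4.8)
The plan is to make $\Gamma_k$ completely explicit and then read everything off the (very small) vertex neighbourhoods. Write $w$ for the unique valence-two vertex subdividing the $t_k$--$u_1$ arc, so that $V(\Gamma_k)=\{u,v,t,w\}\cup\{u_i,v_i,t_i:1\le i\le k\}$; for bookkeeping only, set $t_0:=w$ and $u_{k+1}:=w$. Transcribing the construction, $E(\Gamma_k)$ consists of the star edges $uu_i$, $vv_i$, $tt_i$ ($1\le i\le k$), the rungs $u_iv_i$ and $v_it_i$ ($1\le i\le k$), and the connectors $t_iu_{i+1}$ for $0\le i\le k$ --- the two boundary members being $t_0u_1=wu_1$ and $t_ku_{k+1}=t_kw$, which together form the prescribed length-$2$ path. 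Hence every hub is adjacent to exactly its $k$ like-named leaves, the vertex $w$ is adjacent to exactly $t_k$ and $u_1$, and each of the $3k$ leaves has exactly three neighbours, namely
\[
N(u_i)=\{u,v_i,t_{i-1}\},\qquad N(v_i)=\{v,u_i,t_i\},\qquad N(t_i)=\{t,v_i,u_{i+1}\}.
\]
Part (1) follows at once: $N(u)=\{u_1,\dots,u_k\}$ and no $u_i$ belongs to $N(u_j)$ for $i\ne j$, so $\operatorname{lk}_{\Gamma_k}(u)$ is the edgeless graph on $\{u_1,\dots,u_k\}$; the cases of $v$ and $t$ are identical.

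For parts (2) and (3) I would prove each asserted value by matching an upper bound to a lower bound. For the upper bounds it is enough to display one path of the claimed length; the operative fact is that a hub joins any two of its leaves in two steps, which caps every distance in question at $3$ --- concretely, $u_i - u - u_j - v_j$ when $i\ne j$, and for the wrap-around distances $t_i - t - t_{j-1} - u_j$ (when $j\ge 2$ and $i\ne j-1$) or $t_i - u_{i+1} - u - u_1$ (when $j=1$ and $i\le k-1$). For the lower bounds one inspects the three-element neighbourhood lists above: two leaves are at distance $1$ precisely when the construction places an edge between them, and at distance $\le 2$ precisely when their neighbourhoods meet. A finite check of the intersections yields exactly the ``middle'' cases: $u_i$ and $v_j$ share $t_{i-1}=t_j$ iff $i=j+1$; $v_i$ and $t_j$ share $u_i=u_{j+1}$ iff $i=j+1$; and $t_i$ and $u_j$ share $v_i=v_j$ iff $i=j$, or share the subdivision vertex $w$ iff $(i,j)=(k,1)$; in all remaining cases the neighbourhoods are disjoint and the leaves are non-adjacent, so the distance is exactly $3$. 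Assembling the bounds gives the three displayed formulas.

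The argument is pure bookkeeping, so the only point requiring genuine care is the asymmetry created by the subdivided arc: since $t_k$ and $u_1$ sit at the two ends of a length-$2$ path rather than being joined by an edge, the index arithmetic in part (3) is \emph{not} cyclic, and the extreme indices $i=k$, $j=1$ must be dealt with separately --- this is precisely why (3) records the value $2$ (not $1$) for the pair $(k,1)$. I would also finish by checking that the three case splits are internally consistent, i.e.\ that the ``$i=j$'', ``$i=j+1$'' (resp.\ ``$i+1=j$''), and ``otherwise'' ranges partition $\{1,\dots,k\}^2$ once the boundary indices are taken into account, so that ``otherwise'' unambiguously corresponds to distance $3$.
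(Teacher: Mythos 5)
Your proposal is correct: the explicit edge list, the three-element neighbourhood lists, and the matching upper/lower bounds (hub paths of length $3$ versus the adjacency/common-neighbour check, with the subdivision vertex $w$ accounting precisely for the exceptional value $d_{\Gamma_k}(t_k,u_1)=2$) verify all three parts of the lemma. The paper itself states this lemma without proof, treating it as immediate bookkeeping from the construction of $\Gamma_k$, so your argument simply makes explicit the verification the authors leave implicit, and it does so accurately, including the non-cyclic index behaviour at $i=k$, $j=1$.
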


\subsubsection{Proof of Proposition \ref{prop:arbitrary_integer}}

Our goal is that the loxodromic $g := tvu$ has asymptotic translation length $3 + (1/k)$.
Because $g$ is cyclically syllable-reduced, the axial subgraph $\mathcal{A}_g$ obtained by the decomposition $g = tvu$ exists by Proposition \ref{prop:conv}.
By Theorem \ref{thm:rational_length}, the loxodromic $g$ permutes cyclically at most $k$ geodesics on $\mathcal{A}_g$.
So one of $g, g^2, \dots, g^k$ preserves a geodesic on $\mathcal{A}_g$.
By Lemma \ref{lem:link_separating}, such a geodesic passes through one of $u_1, \dots, u_k$.
So the asymptotic translation length of $g$ is equal to $\min_{i, n \in \{1, \dots, k\}}d_{\Gamma_k^e}(u_i, u_i^{g^n})/n$. 

Before calculating the distance between $u_i$ and $u_i^{g^n}$, we compute several distances in the following lemma.

\begin{lem} \label{lem:uiuj_distance}
For $i, j \in \{1, \dots, k\}$, the following statements hold.
\begin{enumerate}
\item \label{enum:uiuj1} $d_{\Gamma_k^e}(u_i, u_j^g) = 3$ if and only if $i + 1 = j$.
\item \label{enum:uiuj2} $d_{\Gamma_k^e}(u_i, u_j^g) = 4$ if and only if either $i = j$ or $(i, j) = (k, 1)$.
\item \label{enum:uiuj3} $d_{\Gamma_k^e}(u_i, u_j^g) = 5$, otherwise.
\end{enumerate}
\end{lem}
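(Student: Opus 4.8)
The plan is to reduce the computation of $d_{\Gamma_k^e}(u_i,u_j^g)$, with $g=tvu$, to a finite optimization among the distances in $\Gamma_k$ already recorded in Lemma \ref{lem:Graph_k_property}. Since $t,v,u$ pairwise fail to commute, $g=t\cdot v\cdot u$ is the \emph{only} syllable decomposition of $g$, so Proposition \ref{prop:geod_decom} gives a geodesic of $\Gamma_k^e$ from $u_i$ to $u_j^g$ inside $\Omega:=\Gamma\cup\Gamma^{u}\cup\Gamma^{vu}\cup\Gamma^{g}$; in particular $d_{\Gamma_k^e}(u_i,u_j^g)=d_{\Omega}(u_i,u_j^g)$. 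Each of the four pieces of $\Omega$ is an isometrically embedded copy of $\Gamma_k$ by Lemma \ref{lem:suc}\eqref{en1:geo}, and by Lemma \ref{lem:star}\eqref{enum:star1} consecutive pieces meet in stars, $\Gamma\cap\Gamma^{u}=\operatorname{st}_\Gamma(u)$, $\Gamma^{u}\cap\Gamma^{vu}=\operatorname{st}_\Gamma(v)^{u}$, $\Gamma^{vu}\cap\Gamma^{g}=\operatorname{st}_\Gamma(t)^{vu}$, while a short normal-form check shows non-consecutive pieces are disjoint. Hence the nerve of $\Omega$ is the path on four vertices and every path in $\Omega$ from $u_i\in\Gamma$ to $u_j^g\in\Gamma^{g}$ meets $\operatorname{st}_\Gamma(v)^{u}$ and then $\operatorname{st}_\Gamma(t)^{vu}$. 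Writing such crossing vertices as $v_p^{u}$ and $t_q^{vu}$ (convention $v_0:=v$, $t_0:=t$; note $v_p^{u}=v_p^{vu}$ and $t_q^{vu}=t_q^{tvu}$ because $v_p\in\operatorname{st}_\Gamma(v)$, $t_q\in\operatorname{st}_\Gamma(t)$), Lemma \ref{lem:suc} identifies the three sub-segment lengths as $d_\Gamma(u_i,v_p)$, $d_\Gamma(v_p,t_q)$ and $d_\Gamma(t_q,u_j)$. Concatenating geodesics realizing these three distances (for any $p,q$), and conversely splitting the geodesic above at its first hits of the two gluing stars, I obtain
\[ d_{\Gamma_k^e}(u_i,u_j^g)=\min_{0\le p,q\le k}\bigl(d_\Gamma(u_i,v_p)+d_\Gamma(v_p,t_q)+d_\Gamma(t_q,u_j)\bigr). \]

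It then remains to evaluate this minimum using Lemma \ref{lem:Graph_k_property}. First, $p=0$ or $q=0$ always yields a sum $\ge 5$ (since $d_\Gamma(u_i,v)\ge2$, $d_\Gamma(v,t_q)\ge2$, $d_\Gamma(v_p,t)\ge2$, $d_\Gamma(t,u_j)\ge2$), while $p=q=i$ yields a sum $\le1+1+3=5$; so the minimum lies in $\{3,4,5\}$ and, apart from the value $5$, is attained with $p,q\in\{1,\dots,k\}$. For such $p,q$ each summand lies in $\{1,2,3\}$ and, by Lemma \ref{lem:Graph_k_property}, equals $1$ exactly when $p=i$ (first term), $q=p$ (second term), $q+1=j$ (third term). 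Thus the sum equals $3$ iff $p=i$, $q=i$, $i+1=j$, which gives case \eqref{enum:uiuj1}. Enumerating the patterns $(1,1,2),(1,2,1),(2,1,1)$ summing to $4$ shows each forces $j=i$ or $(i,j)=(k,1)$, and conversely both are realized (by $p=q=i$ with $d_\Gamma(t_i,u_i)=2$, resp.\ $p=q=k$ with $d_\Gamma(t_k,u_1)=2$); since $j=i$ and $j=i+1$ are incompatible and $(i,j)=(k,1)$ is not of the form $j=i+1$, this gives case \eqref{enum:uiuj2}. In all remaining cases the minimum is neither $3$ nor $4$, hence $5$, giving case \eqref{enum:uiuj3}.

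The main obstacle is the honest verification of the displayed distance formula, specifically that a geodesic between $u_i$ and $u_j^g$ must thread through the two gluing stars \emph{in order}: this needs the disjointness of non-consecutive copies of $\Gamma$ inside $\Omega$ (so $\Omega$ is a ``linear chain'' of graphs) and the remark that, although $\operatorname{st}_{\Gamma_k^e}(v^{u})$ and $\operatorname{st}_{\Gamma_k^e}(t^{vu})$ can be strictly larger than $\operatorname{st}_\Gamma(v)^{u}$ and $\operatorname{st}_\Gamma(t)^{vu}$, their intersections with $\Omega$ are exactly these stars. Both are routine but slightly technical support computations; alternatively the separation statement of Lemma \ref{lem:sylsep}, applied with $z_2=v^{u}$ and $z_3=t^{vu}$, packages the ordering directly. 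Once the formula is in hand the remaining case analysis is purely combinatorial and uses only Lemma \ref{lem:Graph_k_property}.
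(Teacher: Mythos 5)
Your proof is correct and follows essentially the same route as the paper's: both split a geodesic from $u_i$ to $u_j^g$ at its crossings of the separating sets between consecutive copies of $\Gamma_k$ in $\Gamma\cup\Gamma^u\cup\Gamma^{vu}\cup\Gamma^{g}$, identify the three resulting segment lengths with distances in $\Gamma_k$, and finish with the same case analysis via Lemma \ref{lem:Graph_k_property}. The only cosmetic difference is that the paper invokes separation by the links $(\operatorname{lk}_{\Gamma_k}(v))^u$ and $(\operatorname{lk}_{\Gamma_k}(t))^{vu}$ directly, while you minimize over crossings of the full stars (including the apexes $v^u$, $t^{vu}$) and then discard the apex crossings since they force a sum of at least $5$.
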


\begin{proof}
Because $(\operatorname{lk}_{\Gamma_k}(v))^u$ and $(\operatorname{lk}_{\Gamma_k}(t))^{vu}$ separate $u_i$ from $u_j^g$, there exist vertices $v_a \in \operatorname{lk}_{\Gamma_k}(v)$ and $t_b \in \operatorname{lk}_{\Gamma_k}(t)$ such that \begin{align*} d_{\Gamma_k^e}(u_i, u_j^g) &= d_{\Gamma_k^e}(u_i, v_a^u) + d_{\Gamma_k^e}(v_a^u, t_b^{vu}) + d_{\Gamma_k^e}(t_b^{vu}, u_j^g) \\ &= d_{\Gamma_k}(u_i, v_a) + d_{\Gamma_k}(v_a, t_b) + d_{\Gamma_k}(t_b, u_j). \end{align*}
Because $d_{\Gamma_k}(u_i, v_a)$, $d_{\Gamma_k}(v_a, t_b)$, and $d_{\Gamma_k}(t_b, u_j)$ are positive, we obtain the lower bound $d_{\Gamma_k^e}(u_i, u_j^g) \geq 3$.
\vspace{2mm}

\noindent\eqref{enum:uiuj1}
If $i + 1 = j$, then we have $d_{\Gamma_k^e}(u_i, u_{i + 1}^g) \leq d_{\Gamma_k}(u_i, v_i) + d_{\Gamma_k}(v_i, t_i) + d_{\Gamma_k}(t_i, u_{i+1}) = 3$ by the triangle inequality and Lemma \ref{lem:Graph_k_property}\eqref{enum:grk_prop2}.
So we have $d_{\Gamma_k^e}(u_i, u_{i+1}^g) = 3$.
Conversely, if $d_{\Gamma_k^e}(u_i, u_j^g) = 3$, then the equality $d_{\Gamma_k}(u_i, v_a) = d_{\Gamma_k}(v_a, t_b) = d_{\Gamma_k}(t_b, u_j) = 1$ holds.
By Lemma \ref{lem:Graph_k_property}\eqref{enum:grk_prop2}, we have $a = i = b$ and $j = b + 1 = i + 1$.
\vspace{2mm}

\noindent\eqref{enum:uiuj2}
By \eqref{enum:uiuj1}, if $i \geq j$, then we have $d_{\Gamma_k^e}(u_i, u_j^g) \geq 4$.
If $i = j$, then by the triangle inequality, the inequality $d_{\Gamma_k^e}(u_i, u_i^g) \leq d_{\Gamma_k}(u_i, v_i) + d_{\Gamma_k}(v_i, t_i) + d_{\Gamma_k}(t_i, u_i) = 4$ holds.
If $(i, j) = (k, 1)$, we have $d_{\Gamma_k^e}(u_k, u_1^g) \leq d_{\Gamma_k}(u_k, v_k) + d_{\Gamma_k}(v_k, t_k) + d_{\Gamma_k}(t_k, u_1) = 4$.
Conversely, if $d_{\Gamma_k^e}(u_i, u_j^g) = 4$, then one of $d_{\Gamma_k}(u_i, v_a)$, $d_{\Gamma_k}(v_a, t_b)$, and $d_{\Gamma_k}(t_b, u_j)$ in the righthand side of the above equation is $2$, and the others are $1$.
If either $d_{\Gamma_k}(u_i, v_a)$ or $d_{\Gamma_k}(v_a, t_b)$ is two, then by Lemma \ref{lem:Graph_k_property}\eqref{enum:grk_prop2}, we have $b = i - 1$ and $d_{\Gamma_k}(t_b, u_j) = 1$ so that $j = i$.
If $d_{\Gamma_k}(t_b, u_j) = 2$, then $i = a = b$, so by Lemma \ref{lem:Graph_k_property}\eqref{enum:grk_prop3}, we have $i = j$ or $(i, j) = (k, 1)$.
\vspace{2mm}

\noindent\eqref{enum:uiuj3}
If the pair $(i, j)$ does not satisfy \eqref{enum:uiuj1} or \eqref{enum:uiuj2}, then we have $d_{\Gamma_k^e}(u_i, u_j^g) \geq 5$.
And the triangle inequality gives the following: $d_{\Gamma_k^e}(u_i, u_j^g) \leq d_{\Gamma_k}(u_i, u_{j - 1}) + d_{\Gamma_k}(u_{j - 1}, v_{j - 1}) + d_{\Gamma_k}(v_{j - 1}, t_{j - 1}) + d_{\Gamma_k}(t_{j - 1}, u_j) \leq 5$.
Hence the statement follows.
\end{proof}

From the above lemma, we are able to compute the distance between $u_j$ and $u_j^{g^n}$ for each $j$ and $n$.

\begin{lem} \label{lem:medium_distances}
Suppose $k \geq 3$. For each $j \in \{1, \dots, k\}$ and $n \in \{2, \dots k - 1\}$, we have $d_{\Gamma_k^e}(u_j, u_j^{g^n}) \geq 3n + 2$.
\end{lem}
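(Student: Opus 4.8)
The plan is to reduce $d_{\Gamma_k^e}(u_j,u_j^{g^n})$ to a telescoping sum of one-step distances of the form $d_{\Gamma_k^e}(u_{i_\ell},u_{i_{\ell+1}}^{g})$, and then run an elementary counting argument modulo $k$. First I would note that $g^n$ has the syllable decomposition $(tvu)^n$, whose syllables supported on $u$ are the $(3\ell+1)$-st ones for $\ell=0,\dots,n-1$, with associated vertices $z_{3\ell+1}=u^{g^\ell}$. By Lemma~\ref{lem:link_separating} applied to $g^n$, together with the $\langle g\rangle$-invariance and weak convexity of $\mathcal{A}_g=\mathcal{A}_{tvu}$ from Proposition~\ref{prop:conv}, for each $\ell=1,\dots,n-1$ the translated link $(\operatorname{lk}_{\Gamma_k}(u))^{g^\ell}=\{u_1^{g^\ell},\dots,u_k^{g^\ell}\}$ separates $u_j\in\Gamma_k^{g(0)}$ from $u_j^{g^n}\in\Gamma_k^{g(3n)}$ inside $\mathcal{A}_g$. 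A geodesic $\gamma$ realizing $d_{\Gamma_k^e}(u_j,u_j^{g^n})$ may be taken inside $\mathcal{A}_g$ by weak convexity, and since these separating sets are \emph{links} rather than stars the crossing points of $\gamma$ are genuine leaves, say $u_{i_\ell}^{g^\ell}$. As the walls are linearly ordered along $\mathcal{A}_g$ (the $\ell$-th one separating the $\Gamma_k^{g(m)}$ with $m<3\ell$ from those with $m>3\ell$), the ordering argument used in the proof of Proposition~\ref{prop:geod_decom} shows $\gamma$ meets them in the order $\ell=0,1,\dots,n$, with the convention $i_0=i_n=j$, so that
$$d_{\Gamma_k^e}(u_j,u_j^{g^n})=\sum_{\ell=0}^{n-1} d_{\Gamma_k^e}\!\left(u_{i_\ell}^{g^\ell},u_{i_{\ell+1}}^{g^{\ell+1}}\right)=\sum_{\ell=0}^{n-1} d_{\Gamma_k^e}\!\left(u_{i_\ell},u_{i_{\ell+1}}^{g}\right),$$
the last equality because $g$ acts by isometries.

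Next I would feed each summand into Lemma~\ref{lem:uiuj_distance}. Put $c_\ell:=d_{\Gamma_k^e}(u_{i_\ell},u_{i_{\ell+1}}^{g})-3$, a non-negative integer, so that $d_{\Gamma_k^e}(u_j,u_j^{g^n})=3n+\sum_{\ell=0}^{n-1}c_\ell$; it then suffices to prove $\sum_\ell c_\ell\ge 2$. Lemma~\ref{lem:uiuj_distance} says precisely that $c_\ell=0$ forces $i_{\ell+1}=i_\ell+1$ (as integers in $\{1,\dots,k\}$), while $c_\ell=1$ forces $i_{\ell+1}-i_\ell\in\{0,-(k-1)\}$, corresponding to the two cases $i_{\ell+1}=i_\ell$ and $(i_\ell,i_{\ell+1})=(k,1)$.

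Finally I would argue by contradiction: assume $\sum_\ell c_\ell\le 1$. If every $c_\ell=0$, then telescoping the increments gives $j=i_n=i_0+n=j+n$, forcing $n=0$ and contradicting $n\ge 2$. If exactly one $c_{\ell_0}=1$ and the rest vanish, then $0=i_n-i_0=(n-1)\cdot 1+\delta$ with $\delta\in\{0,-(k-1)\}$; since $n\ge 2$ we get $\delta=1-n\neq 0$, hence $\delta=-(k-1)$ and therefore $n=k$, contradicting $n\le k-1$. Thus $\sum_\ell c_\ell\ge 2$, and $d_{\Gamma_k^e}(u_j,u_j^{g^n})\ge 3n+2$, as claimed.

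I expect the only genuinely delicate point to be the first paragraph: extracting the clean decomposition of the realizing geodesic through the translated links, in the correct order and through honest leaves rather than through the star centers $u^{g^\ell}$. This rests on Lemma~\ref{lem:link_separating}, the weak convexity of $\mathcal{A}_g$, and the ordering bookkeeping; one should also check in passing that the vertices $u_j$ and $u_i^{g^\ell}$ appearing are pairwise distinct over the relevant ranges, which follows from $g=tvu$ being a cyclically syllable-reduced loxodromic. The remaining two paragraphs are just the distance table of Lemma~\ref{lem:uiuj_distance} combined with the elementary modular count above.
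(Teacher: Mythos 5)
Your proposal is correct and follows essentially the same route as the paper: decompose a geodesic from $u_j$ to $u_j^{g^n}$ via its crossings of the translated links $(\operatorname{lk}_{\Gamma_k}(u))^{g^\ell}$, reduce to a sum of one-step distances $d_{\Gamma_k^e}(u_{i_\ell}, u_{i_{\ell+1}}^g)$, and then invoke Lemma \ref{lem:uiuj_distance} to rule out totals $3n$ and $3n+1$. Your bookkeeping with the excesses $c_\ell$ and the modular count is just a compact repackaging of the paper's two-case contradiction argument, so the two proofs coincide in substance.
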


\begin{proof}
Because $(\operatorname{lk}_{\Gamma_k}(u))^{g^l}$ intersects a geodesic joining $u_j$ from $u_j^{g^n}$ for each $l \in \{ 0, \dots, n\}$, the equation $$d_{\Gamma_k^e}(u_j, u_j^{g^n}) = \sum_{l = 0}^{n - 1} d_{\Gamma_k^e}(u_{\phi(l)}^{g^l}, u_{\phi(l + 1)}^{g^{l + 1}}) = \sum_{l = 0}^{n - 1} d_{\Gamma_k^e}(u_{\phi(l)}, u_{\phi(l + 1)}^{g})$$ holds for some map $\phi: \{0, \dots, n \} \to \{ 1, \dots, k \}$ with $\phi(0) = \phi(n) = j$.

Since for each $l$, the distance $d_{\Gamma_k^e}(u_{\phi(l)}, u_{\phi(l +1)}^{g})$ is at least $3$ by Lemma \ref{lem:uiuj_distance}, we have $d_{\Gamma_k^e}(u_j, u_j^{g^n}) \geq 3n$.
If $d_{\Gamma_k^e}(u_j, u_j^{g^n}) = 3n$, then one has $d_{\Gamma_k^e}(u_{\phi(l)}, u_{\phi(l + 1)}^g) = 3$ for all $l$.
Then by Lemma \ref{lem:uiuj_distance}\eqref{enum:uiuj1}, we have $\phi(l) + 1 = \phi(l + 1)$ for all $l$.
This implies $j + n = \phi(0) + n = \phi(n) = j$, which is a contradiction.
So $d_{\Gamma_k^e}(u_j, u_j^{g^n}) > 3n$.

If $d_{\Gamma_k^e}(u_j, u_j^{g^n}) = 3n + 1$, then we have $$d_{\Gamma_k^e}(u_{\phi(l)}, u_{\phi(l+1)}^g) = \begin{cases} 3 & l \neq l_0, \\ 4 & l = l_0 \end{cases}$$ for some $l_0 \in \{0, \dots n-1\}$.
By Lemma \ref{lem:uiuj_distance}\eqref{enum:uiuj2}, we have either $\phi(l_0) = \phi(l_0 + 1)$ or $(\phi(l_0), \phi(l_0 + 1)) = (k, 1)$.
If $\phi(l_0) = \phi(l_0 + 1)$, then we have $j = \phi(n) = \phi(0) + n - 1 = j + n - 1 > j$.
This is a contradiction.
If $\phi(l_0) = k$ and $\phi(l_0 + 1) = 1$, then we have $k = \phi(l_0) = \phi(0) + l_0 = j + l_0$ and $j = \phi(n) = \phi(n - 1) + 1 = \dots = \phi(n - (n - l_0 - 1)) + (n - l_0 - 1) = n - l_0$.
This implies $n = k$, which is also a contradiction.
Therefore, $d_{\Gamma_k^e}(u_j, u_j^{g^n}) \geq 3n + 2$.
\end{proof}

To verify the distance $d_{\Gamma_k^e}(u_j, u_j^{g^k}) / k$ is smaller than the others, we show the following.

\begin{lem} \label{lem:k_upper_bound}
We have $d_{\Gamma_k^e}(u_j, u_j^{g^k}) \leq 3k + 1$ for each $j \in \{1, \dots, k\}$.
\end{lem}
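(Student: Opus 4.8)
The plan is to produce an explicit path from $u_j$ to $u_j^{g^k}$ of length exactly $3k+1$, built from the picture of the axial subgraph $\mathcal{A}_g$ in Figure \ref{subfig:de32}: march one copy of $\Gamma_k$ to the right at each of the $k$ steps from $\Gamma_k$ to $\Gamma_k^{g^k}$, while advancing the leaf index of $u$ cyclically, $u_j\rightsquigarrow u_{j+1}\rightsquigarrow\cdots\rightsquigarrow u_k\rightsquigarrow u_1\rightsquigarrow\cdots\rightsquigarrow u_j$. The length of this path is then estimated summand by summand using Lemma \ref{lem:uiuj_distance}, and conjugation-invariance of $d_{\Gamma_k^e}$ lets me reduce every single step to a distance of the form $d_{\Gamma_k^e}(u_a,u_b^{g})$ already computed there.

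Concretely, I would fix $j\in\{1,\dots,k\}$ and define $\phi\colon\{0,1,\dots,k\}\to\{1,\dots,k\}$ by letting $\phi(l)$ be the representative in $\{1,\dots,k\}$ of $j+l$ modulo $k$; then $\phi(0)=\phi(k)=j$, and the restriction of $\phi$ to $\{0,\dots,k-1\}$ is a bijection onto $\{1,\dots,k\}$. Since $g^l$ acts on $\Gamma_k^e$ by isometries, the triangle inequality gives
\[
d_{\Gamma_k^e}\!\left(u_j,u_j^{g^k}\right)\ \le\ \sum_{l=0}^{k-1} d_{\Gamma_k^e}\!\left(u_{\phi(l)}^{g^l},u_{\phi(l+1)}^{g^{l+1}}\right)\ =\ \sum_{l=0}^{k-1} d_{\Gamma_k^e}\!\left(u_{\phi(l)},u_{\phi(l+1)}^{g}\right).
\]
For every $l$ with $\phi(l)\ne k$ one has $\phi(l+1)=\phi(l)+1$, so the corresponding summand equals $3$ by Lemma \ref{lem:uiuj_distance}\eqref{enum:uiuj1}. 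There is exactly one index $l_0\in\{0,\dots,k-1\}$ with $\phi(l_0)=k$ (namely $l_0=k-j$), and for that index $\phi(l_0+1)=1$, so $(\phi(l_0),\phi(l_0+1))=(k,1)$ and that summand equals $4$ by Lemma \ref{lem:uiuj_distance}\eqref{enum:uiuj2}. Summing, $d_{\Gamma_k^e}(u_j,u_j^{g^k})\le 3(k-1)+4=3k+1$.

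This argument is essentially mechanical once Lemma \ref{lem:uiuj_distance} is available; there is no real obstacle. The one point deserving attention is that the single ``wrap-around'' step $u_k\rightsquigarrow u_1$ contributes only $4$, not $5$ — this is exactly case \eqref{enum:uiuj2} of that lemma — and that precisely one wrap-around step occurs, which is guaranteed because $\phi$ is a bijection on $\{0,\dots,k-1\}$ (so exactly one value $\phi(l)$ equals $k$). I would also note in passing that the same count, combined with the lower bound established in Lemma \ref{lem:medium_distances} and in the subsequent lemma, forces $d_{\Gamma_k^e}(u_j,u_j^{g^k})=3k+1$, whence $\tau(g^k)=3k+1$ and $\tau(g)=(3k+1)/k=3+1/k$, which is the content of Proposition \ref{prop:arbitrary_integer}.
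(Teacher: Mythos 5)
Your proof is correct and follows essentially the same route as the paper: apply the triangle inequality along the sequence $u_{\phi(l)}^{g^l}$ that advances the leaf index cyclically, use isometry of the $g$-action to reduce each step to $d_{\Gamma_k^e}(u_a,u_{a+1}^{g})$, and invoke Lemma \ref{lem:uiuj_distance} to count $k-1$ steps of length $3$ plus one wrap-around step of length $4$. The paper merely writes the same sum split into the two pieces before and after the wrap-around instead of using your mod-$k$ indexing.
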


\begin{proof}
By triangle inequality and Lemma \ref{lem:uiuj_distance}\eqref{enum:uiuj1}\eqref{enum:uiuj2}, we have
\begin{align*}
d_{\Gamma_k^e}(u_j, u_j^{g^k}) &\leq \left( \sum_{l = 1}^{k - j} d_{\Gamma_k^e}(u_{j - 1 + l}^{g^{l - 1}}, u_{j + l}^{g^l}) \right) + d_{\Gamma_k^e}(u_k^{g^{k - j}}, u_1^{g^{k - j + 1}}) \\
&\qquad \qquad + \left( \sum_{l = 1}^{j - 1} d_{\Gamma_k^e}(u_{l}^{g^{k - j + l}}, u_{l + 1}^{g^{k - j + l + 1}}) \right) \\
&= \left( \sum_{l = 1}^{k - j} d_{\Gamma_k^e}(u_{j - 1 + l}, u_{j + l}^{g}) \right) + d_{\Gamma_k^e}(u_k, u_1^{g}) + \left( \sum_{l = 1}^{j - 1} d_{\Gamma_k^e}(u_{l}, u_{l + 1}^{g}) \right) \\
&= 3(k - 1) + 4 = 3k + 1.
\end{align*}
So the inequality holds.
\end{proof}

Combining Lemma \ref{lem:uiuj_distance}\eqref{enum:uiuj2}, Lemma \ref{lem:medium_distances}, and Lemma \ref{lem:k_upper_bound}, we obtain the fact that $\tau(g)$ is  equal to $\min_{j \in \{1, \dots,k\}}d_{\Gamma_k^e}(u_j, u_j^{g^k}) / k$.
The following lemma gives the concrete number of $d_{\Gamma_k^e}(u_j, u_j^{g^k}) / k$.

\begin{lem} \label{lem:k_lower_bound}
We have $d_{\Gamma_k^e}(u_j, u_j^{g^k}) \geq 3k + 1$ for each $j \in \{1, \dots, k\}$.
\end{lem}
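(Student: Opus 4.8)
The plan is to follow the blueprint of the proof of Lemma~\ref{lem:medium_distances}, specialised to the exponent $n=k$; in fact the argument becomes shorter, because now only the value $3k$ needs to be excluded, not both $3k$ and $3k+1$.

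First I would fix $j\in\{1,\dots,k\}$ and pick a geodesic $\gamma$ of $\Gamma_k^e$ from $u_j$ to $u_j^{g^k}$. Since $g=tvu$ has rightmost syllable $u$, Lemma~\ref{lem:link_separating} (applied to $\mathcal{A}_g$, which exists by Proposition~\ref{prop:conv}) shows that $\operatorname{lk}_{\Gamma_k}(u)$ separates the ends of $\mathcal{A}_g$; translating, each $(\operatorname{lk}_{\Gamma_k}(u))^{g^l}$ separates $u_j$ from $u_j^{g^k}$, so $\gamma$ meets it, for every $l\in\{0,\dots,k\}$ (trivially for $l=0$ and $l=k$). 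Choosing for each $l$ a vertex $u_{\phi(l)}^{g^l}$ of $\gamma\cap(\operatorname{lk}_{\Gamma_k}(u))^{g^l}$ — possible because $\operatorname{lk}_{\Gamma_k}(u)=\{u_1,\dots,u_k\}$ by Lemma~\ref{lem:Graph_k_property}\eqref{enum:grk_prop1} — I obtain a map $\phi\colon\{0,\dots,k\}\to\{1,\dots,k\}$ with $\phi(0)=\phi(k)=j$ and
\[
d_{\Gamma_k^e}(u_j,u_j^{g^k})=\sum_{l=0}^{k-1}d_{\Gamma_k^e}\!\bigl(u_{\phi(l)}^{g^l},u_{\phi(l+1)}^{g^{l+1}}\bigr)=\sum_{l=0}^{k-1}d_{\Gamma_k^e}\!\bigl(u_{\phi(l)},u_{\phi(l+1)}^{g}\bigr),
\]
the last equality because $g$ acts by isometries.

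Then I would invoke Lemma~\ref{lem:uiuj_distance}: every summand is $\ge 3$, with equality exactly when $\phi(l+1)=\phi(l)+1$. Hence $d_{\Gamma_k^e}(u_j,u_j^{g^k})\ge 3k$, and if it equalled $3k$ then every summand would be $3$, forcing $\phi(l+1)=\phi(l)+1$ for all $l$, and therefore $\phi(k)=\phi(0)+k=j+k$; this contradicts $\phi(k)=j$ since $k\ge 2$. Thus $d_{\Gamma_k^e}(u_j,u_j^{g^k})\ge 3k+1$, as claimed (and, combined with Lemma~\ref{lem:k_upper_bound}, this gives $d_{\Gamma_k^e}(u_j,u_j^{g^k})=3k+1$, whence $\tau(g)=(3k+1)/k=3+1/k$, completing Proposition~\ref{prop:arbitrary_integer}).

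I do not expect a real obstacle here; the only point requiring a little care is the displayed decomposition, namely that each $(\operatorname{lk}_{\Gamma_k}(u))^{g^l}$ genuinely separates the two endpoints \emph{in $\Gamma_k^e$} and not merely inside $\mathcal{A}_g$. This is handled exactly as in the proof of Lemma~\ref{lem:medium_distances}: weak convexity of $\mathcal{A}_g$ (Proposition~\ref{prop:conv}) lets us take $\gamma\subseteq\mathcal{A}_g$, where the separation is Lemma~\ref{lem:link_separating}, and the value of $d_{\Gamma_k^e}(u_j,u_j^{g^k})$ is independent of the chosen geodesic.
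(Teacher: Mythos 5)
Your argument is correct and coincides with the paper's own proof: both decompose a geodesic from $u_j$ to $u_j^{g^k}$ along the separating translates $(\operatorname{lk}_{\Gamma_k}(u))^{g^l}$, apply Lemma \ref{lem:uiuj_distance} to bound each of the $k$ summands below by $3$, and rule out the total $3k$ because equality in every summand would force $\phi(k)=\phi(0)+k=j+k\neq j$. The extra care you take in justifying the decomposition (taking the geodesic inside $\mathcal{A}_g$ via Proposition \ref{prop:conv} and Lemma \ref{lem:link_separating}) is a welcome but minor elaboration of what the paper leaves implicit.
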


\begin{proof}
Because $(\operatorname{lk}_{\Gamma_k}(u))^{g^l}$ intersects some geodesic $u_1$ from $u_1^{g^k}$ for each $l \in \{ 0, \dots, k \}$, there exists a map $\phi: \{ 0, \dots, k \} \to \{1, \dots, k\}$ with $\phi(0) = j = \phi(k)$ such that $$d_{\Gamma_k^e}(u_j, u_j^{g^k}) = \sum_{l = 1}^{k} d_{\Gamma_k^e}(u_{\phi(l)}^{g^{l - 1}}, u_{\phi(l + 1)}^{g^l}) = \sum_{l = 1}^{k} d_{\Gamma_k^e}(u_{\phi(l)}, u_{\phi(l + 1)}^{g}).$$
By Lemma \ref{lem:uiuj_distance}, we have $d_{\Gamma_k^e}(u_j, u_j^{g^k}) \geq 3k$.
If $d_{\Gamma_k^e}(u_j, u_j^{g^k}) = 3k$, then for every $l \in \{1, \dots, k\}$, we have $d_{\Gamma_k^e}(u_{\phi(l)}, u_{\phi(l + 1)}^g) = 3$. Then by Lemma \ref{lem:uiuj_distance}, we have $j = \phi(k) = \phi(k - 1) + 1 = \dots = \phi(0) + k = j + k$, which is a contradiction.
Therefore, we have $d_{\Gamma_k^e}(u_j, u_j^{g^k}) \geq 3k + 1$.
\end{proof}

By Lemma \ref{lem:k_upper_bound} and Lemma \ref{lem:k_lower_bound}, we have $d_{\Gamma_k^e}(u_j, u_j^{g^k}) = 3k + 1$ for every $j \in \{1, \dots, k\}$.
By Lemma \ref{lem:uiuj_distance}\eqref{enum:uiuj2}, we obtain that $d_{\Gamma_k^e}(u_j, u_j^g) = 4$ is strictly larger than $d_{\Gamma_k^e}(u_j, u_j^g) / k = (3k + 1) / k$ for all $j \in \{1, \dots k\}$.
We derive from Lemma \ref{lem:medium_distances} the fact that for every $n \in \{ 2, \dots, k - 1 \}$, the distance $d_{\Gamma_k^e}(u_j, u_j^{g^n})$ is larger than $(3k + 1) / k$.
Therefore, we have $\tau(g)  = d_{\Gamma_k^e}(u_j, u_j^{g^k}) = 3 + (1/k)$.
$\hfill\blacksquare$

\subsection{Small syllable length}
Consider a loxodromic of syllable length $2$ for an arbitrary finite simplicial connected graph.
In this case, such a loxodromic is always cyclically syllable-reduced and has a unique syllable decomposition.

\begin{prop} \label{prop:small}
Every loxodromic $g$ of syllable length $2$ has an integer asymptotic translation length.
Precisely, if the support of $g$ is $\{ v_1, v_2 \}$, then we have $\tau(g) = 2d_\Gamma(v_1, v_2) - 4$.
\end{prop}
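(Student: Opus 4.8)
The plan is to compute $d_{\Gamma^e}(v_1, v_1^{g^n})$ exactly for every $n \geq 1$ and then pass to the limit defining $\tau$. As noted just above, $g$ is cyclically syllable-reduced and has a unique syllable decomposition, say $g = s_2 s_1$ with $s_1 = v_1^a$, $s_2 = v_2^b$, $a,b \neq 0$. Since $g$ is loxodromic it is not elliptic, so by Proposition~\ref{prop:ellip} the set $\{v_1,v_2\}$ is not contained in a join of $\Gamma$; hence $v_1 \neq v_2$, $v_1$ and $v_2$ are not adjacent, and they have no common neighbour, so $D := d_\Gamma(v_1,v_2) \geq 3$. Also $g$ is not a star word, so Lemma~\ref{lem:cyclic_syllable} gives $\|g^n\|_{\mathrm{syl}} = 2n$ for all $n$; writing $g^n = s_{2n}\cdots s_1$ with $s_i$ a copy of $v_1^a$ for $i$ odd and of $v_2^b$ for $i$ even, no two adjacent syllables commute, so by \cite{MR1314099} this is the only syllable decomposition of $g^n$. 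Set $\epsilon(i) := 1$ for $i$ odd, $\epsilon(i):=2$ for $i$ even, and $z_i := v_{\epsilon(i)}^{s_i\cdots s_1}$.

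First I would prove the distance formula
\[
  d_{\Gamma^e}(v_1, v_1^{g^n}) \;=\; \min \sum_{i=0}^{2n} d_\Gamma(q_i,q_{i+1}),
\]
over tuples $(q_0,\dots,q_{2n+1})$ with $q_0 = q_{2n+1} = v_1$ and $q_i \in \operatorname{st}_\Gamma(v_{\epsilon(i)})$ for $1\le i\le 2n$. The point is that $\Gamma^{s_i\cdots s_1}$ is the image of $\Gamma$ under a graph automorphism of $\Gamma^e$, which restricts to an isometry onto $\Gamma^{s_i\cdots s_1}$ (using $d_{\Gamma^e}|_\Gamma = d_\Gamma$, a consequence of Lemma~\ref{lem:suc}), and that the consecutive intersections are $\Gamma^{s_{i-1}\cdots s_1}\cap\Gamma^{s_i\cdots s_1} = \operatorname{st}_\Gamma(v_{\epsilon(i)})^{s_{i-1}\cdots s_1}$, on which $s_i$ acts trivially. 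Thus any admissible tuple $(q_i)$ gives, by concatenating geodesics of the $\Gamma^{s_i\cdots s_1}$ through the points $q_i^{s_i\cdots s_1}$, a path from $v_1$ to $v_1^{g^n}$ of length $\sum d_\Gamma(q_i,q_{i+1})$ — giving ``$\leq$''. For ``$\geq$'', apply Proposition~\ref{prop:geod_decom} to $g^n$: since the syllable decomposition is unique, it yields a geodesic from $v_1$ to $v_1^{g^n}$ that is exactly such a concatenation (by Lemma~\ref{lem:sylsep} it must meet all $2n$ separating subgraphs $\operatorname{st}_{\Gamma^e}(z_i)$ in order), and its length, read off along the intersections, equals $\sum d_\Gamma(q_i,q_{i+1})$ for the corresponding tuple.

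Then I would evaluate the right-hand side. If $a'\in\operatorname{st}_\Gamma(v_1)$ and $b'\in\operatorname{st}_\Gamma(v_2)$ then $d_\Gamma(a',b') \geq D - d_\Gamma(v_1,a') - d_\Gamma(v_2,b') \geq D-2$. Of the $2n+1$ summands, the $2n-2$ hops $q_i\to q_{i+1}$ with $2\le i\le 2n-1$ each run between $\operatorname{st}_\Gamma(v_1)$ and $\operatorname{st}_\Gamma(v_2)$ and so are $\geq D-2$; the first two together satisfy $d_\Gamma(q_0,q_1)+d_\Gamma(q_1,q_2) \geq d_\Gamma(v_1,q_2) \geq D-1$; and the last satisfies $d_\Gamma(q_{2n},v_1) \geq D-1$. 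Hence the minimum is $\geq (2n-2)(D-2) + 2(D-1) = 2n(D-2)+2$. For the reverse inequality, fix a geodesic $v_1 = w_0,w_1,\dots,w_D = v_2$ of $\Gamma$; since $D\geq 3$ we have $w_1 \in \operatorname{lk}_\Gamma(v_1)$, $w_{D-1}\in\operatorname{lk}_\Gamma(v_2)$, $d_\Gamma(w_1,w_{D-1}) = D-2$ and $d_\Gamma(w_{D-1},v_1) = D-1$, so the tuple $q_{2j-1} := w_1$, $q_{2j} := w_{D-1}$ ($1\le j\le n$) gives $\sum = 1 + (2n-1)(D-2) + (D-1) = 2n(D-2)+2$. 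Therefore $d_{\Gamma^e}(v_1,v_1^{g^n}) = 2n(D-2)+2$ for every $n\geq 1$, so
\[
  \tau(g) = \lim_{n\to\infty}\frac{d_{\Gamma^e}(v_1,v_1^{g^n})}{n} = 2(D-2) = 2\,d_\Gamma(v_1,v_2)-4 \in \mathbb{Z}.
\]

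The step I expect to be the main obstacle is the distance formula: one must justify carefully that, because the syllable decomposition of each $g^n$ is rigid, every shortest path from $v_1$ to $v_1^{g^n}$ threads through the $2n$ separating subgraphs $\operatorname{st}_{\Gamma^e}(z_i)$ in their natural order, so that $d_{\Gamma^e}$ breaks up into a minimum of a sum of $\Gamma$-distances between stars. This is essentially repackaging the proof of Proposition~\ref{prop:geod_decom} (together with Lemma~\ref{lem:sylsep}) in the present, maximally rigid, situation. After that the optimization is routine triangle-inequality bookkeeping; its only subtle feature is that the two end hops are anchored to the single vertex $v_1\in\operatorname{st}_\Gamma(v_1)$ rather than to a free point of $\operatorname{st}_\Gamma(v_1)$, which is exactly what creates the additive constant $2$ (irrelevant after dividing by $n$).
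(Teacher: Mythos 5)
Your proposal is correct, and it reaches the same value $\tau(g)=2d_\Gamma(v_1,v_2)-4$ by a route that is packaged differently from the paper's. Your preliminary reductions are fine ($D:=d_\Gamma(v_1,v_2)\ge 3$ from Proposition \ref{prop:ellip}, rigidity of the syllable decomposition of $g^n$ from Lemma \ref{lem:cyclic_syllable} plus Hermiller--Meier), and the one step you flag as delicate — that a geodesic from $v_1$ to $v_1^{g^n}$ threads the separating stars in order, so that its length is a sum of $\Gamma$-distances between points of $\operatorname{st}_\Gamma(v_1)$ and $\operatorname{st}_\Gamma(v_2)$ — is exactly the identity $d_{\Gamma^e}(u,v^g)=\sum_i d_{\Gamma^e}(x_i,x_{i+1})$ with $x_i\in\operatorname{st}_\Gamma(v_i)^{s_i\cdots s_1}$ established inside the proof of Proposition \ref{prop:geod_decom} (via Lemma \ref{lem:sylsep}), so invoking it here, where the decomposition of $g^n$ is unique, is legitimate; your optimization $\min\sum d_\Gamma(q_i,q_{i+1})=2n(D-2)+2$ and the limit are then routine and correct. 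The paper argues more geometrically: it fixes a shortest geodesic $\gamma$ in $\Gamma$ between $\operatorname{st}_\Gamma(v_1)$ and $\operatorname{st}_\Gamma(v_2)$ (length $D-2$), forms $L=\bigcup_{q\in\mathbb{Z}}(\gamma\cup\gamma^{s_1})^{g^q}$, shows $L$ is a $\langle g\rangle$-invariant bi-infinite path, and proves $L$ is a geodesic by comparing any competing path inside the weakly convex union $\bigcup_q(\Gamma\cup\Gamma^{s_1})^{g^q}$ (Lemma \ref{lem:finite_convex}) with the same count you use, namely that each copy of $\Gamma$ crossed costs at least $D-2$; the translation length is then read off along $L$. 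So the lower-bound mechanism is shared, but your version trades the construction of an invariant axis for an exact orbit-distance computation $d_{\Gamma^e}(v_1,v_1^{g^n})=2n(D-2)+2$ followed by the limit. This gives slightly more information (exact distances for every $n$) and only uses the general-graph tools (Lemma \ref{lem:sylsep}, Proposition \ref{prop:geod_decom}) rather than Lemma \ref{lem:finite_convex}, which formally sits under the girth-at-least-$6$ standing hypothesis (harmless in the paper's setting because the decomposition is unique, but your route sidesteps the point); the paper's version, in exchange, exhibits an explicit $g$-invariant geodesic axis.
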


\begin{proof}
Let $g := s_2s_1$ be a syllable decomposition of $g \in A(\Gamma)$ with $\{ v_i \} = \operatorname{supp}(s_i)$ for each $i$.
If $\gamma$ is a geodesic joining $\operatorname{st}_\Gamma(v_1)$ and $\operatorname{st}_\Gamma(v_2)$ which gives the smallest distance between these stars, then the length of $\gamma$ is equal to $d_\Gamma(v_1, v_2) - 2$.
In addition, because $g$ is loxodromic, the distance of $v_1$ and $v_2$ is more than $2$, so that $\gamma$ has positive length.
Let $u_i$ be the endpoint of $\gamma$ lying on $\operatorname{st}_\Gamma(v_i)$.
Write $L := \bigcup_{q \in \mathbb{Z}}(\gamma \cup \gamma^{s_1})^{g^q}$.

First, we claim that $L$ is a bi-infinite path.
Since $L$ is a quasi-axis of $g$, it is unbounded.
Note that the $\langle g \rangle$-orbit of $\gamma \cup \gamma^{s_1}$ covers $L$.
For every integers $q$ and $r$, if $\lvert q - r \rvert = 1$, then the segments $(\gamma \cup \gamma^{s_1})^{g^q}$ and $(\gamma \cup \gamma^{s_1})^{g^r}$ share only a vertex; therefore, $L$ is connected.
If $\lvert q - r \rvert > 1$, then $(\gamma \cup \gamma^{s_1})^{g^q}$ is disjoint from $(\gamma \cup \gamma^{s_1})^{g^r}$.
This implies all elements of $L$ are bivalent.
In conclusion, $L$ is a connected unbounded $2$-regular graph, that is, $L$ is a bi-infinite path.

Second, we claim that $L$ is a geodesic.
For each $Q > 0$, let $\eta$ denote the segment $\bigcup_{q = 0}^{Q} (\gamma \cup \gamma^{s_1})^{g^q}$, which joins $u_2$ and $u_2^{g^{Q}}$.
The length of $\eta$ is $2Q \cdot (d_\Gamma(v_1, v_2) - 2)$.
By Lemma \ref{lem:finite_convex}, there exists a geodesic path $\delta$ joining $u_2$ to $u_2^{g^Q}$ contained in $\bigcup_{q = 1}^Q (\Gamma \cup \Gamma^{s_1})^{g^q}$.

For each $q$, the length of $\delta \cap \Gamma^{g^q}$ is at least $d_\Gamma(v_1, v_2) - 2$ since the intersection of $\delta$ and $\Gamma^{g^q}$ is a geodesic joining $(\operatorname{st}_\Gamma(v_1))^{g^q}$ and $(\operatorname{st}_\Gamma(v_2))^{g^q}$.
Similarly, the length of $\delta \cap \Gamma^{s_1 g^q}$ is also at least $d_\Gamma(v_1, v_2) - 2$.
So the length of $\delta$ is larger than or equal to $2Q \cdot ( d_\Gamma(v_1, v_2) - 2)$.
This implies that $\eta$ is a geodesic segment.

Because $Q$ is arbitrary, the subray of $L$ starting from $u_2$ is geodesic.
Since $L$ is $\langle g \rangle$-invariant, the whole of $L$ is geodesic.
So the claim holds.
Therefore, the asymptotic translation length of $g$ is equal to $2d_\Gamma(v_1, v_2) - 4$.
\end{proof}

Especially, we obtain the upper bound of the minimum positive asymptotic translation length of $A(\Gamma)$ on $\Gamma^e$.

\begin{cor} \label{cor:length_two}
If the diameter of $\Gamma$ is at least $3$, then the minimum positive asymptotic translation length of $A(\Gamma)$ is at most $2$.
\end{cor}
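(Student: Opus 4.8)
The plan is to produce a single loxodromic element of $A(\Gamma)$ whose asymptotic translation length is exactly $2$; since $2$ is then a positive value in $\operatorname{Spec}(A(\Gamma))$, the minimum positive asymptotic translation length is automatically at most $2$.

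First I would use $\operatorname{diam}(\Gamma)\ge 3$ to choose vertices $v_1,v_2\in V(\Gamma)$ with $d_\Gamma(v_1,v_2)=3$. Concretely, pick a geodesic edge path $x_0,x_1,\dots,x_d$ in $\Gamma$ with $d=d_\Gamma(x_0,x_d)\ge 3$, and set $v_1:=x_0$ and $v_2:=x_3$; any subpath of a geodesic is a geodesic, so $d_\Gamma(v_1,v_2)=3$. Then set $g:=v_1v_2\in A(\Gamma)$. Since $v_1$ and $v_2$ are non-adjacent they do not commute, so $g$ is cyclically reduced, $\operatorname{supp}(g)=\{v_1,v_2\}$, and (as $g$ is neither trivial nor a power of a single vertex, and all reduced words of $g$ have support $\{v_1,v_2\}$ by Hermiller--Meier) $\|g\|_{\operatorname{syl}}=2$.

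Next I would check that $g$ is loxodromic; this is the one place where the hypothesis $d_\Gamma(v_1,v_2)\ge 3$ (and not merely $\ge 2$) is essential. By Proposition \ref{prop:ellip} it is enough to see that $\operatorname{supp}(g)=\{v_1,v_2\}$ is not contained in a join of $\Gamma$. If it were, say $\{v_1,v_2\}\subseteq \Delta_1 * \Delta_2\subseteq\Gamma$ with $\Delta_1,\Delta_2$ nonempty, then either $v_1$ and $v_2$ lie in different factors, in which case they are adjacent, or they lie in the same factor, say $\Delta_1$, in which case any vertex of the nonempty factor $\Delta_2$ is adjacent to both of them; either way $d_\Gamma(v_1,v_2)\le 2$, contradicting $d_\Gamma(v_1,v_2)=3$. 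Hence $g$ is loxodromic.

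Finally, $g$ is a loxodromic of syllable length $2$ with $\operatorname{supp}(g)=\{v_1,v_2\}$, so Proposition \ref{prop:small} yields $\tau(g)=2d_\Gamma(v_1,v_2)-4=2\cdot 3-4=2$. Since $\tau(g)=2>0$, the minimum positive asymptotic translation length of $A(\Gamma)$ on $\Gamma^e$ is at most $2$, as claimed. I expect the only real obstacle to be the loxodromicity argument of the third paragraph: one must rule out the element $v_1v_2$ being elliptic, which genuinely fails when $d_\Gamma(v_1,v_2)=2$ (for instance $v_1v_2$ on the path $P_3$, where the formula of Proposition \ref{prop:small} correctly gives $\tau=0$), so the distance-$3$ choice must be used carefully there; everything else is a direct application of Proposition \ref{prop:small}.
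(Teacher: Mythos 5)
Your proposal is correct and follows the paper's own route: pick two vertices at distance exactly $3$ (possible since $\operatorname{diam}(\Gamma)\geq 3$) and apply Proposition \ref{prop:small} to the syllable-length-$2$ element they generate, giving $\tau = 2\cdot 3 - 4 = 2$. The only difference is that you spell out the loxodromicity check via Proposition \ref{prop:ellip} (support not in a join), which the paper leaves implicit; this is a welcome but inessential elaboration.
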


\begin{proof}
For two vertices $u, v \in \Gamma$ with $d_\Gamma(u, v) = 3$, the loxodromic $vu$ has asymptotic translation length $2$ by Proposition \ref{prop:small}.
\end{proof}

\bibliographystyle{alpha} 
\bibliography{raag}

\end{document}